\definecolor{gray}{rgb}{0.25, 0.25, 0.25}
\newtheorem{theorem}{Theorem}[section]
\newtheorem{lemma}[theorem]{Lemma}
\newtheorem{claim}[theorem]{Claim}
\newtheorem{cor}[theorem]{Corollary}
\newtheorem*{observation*}{Observation}
\newtheorem*{problem*}{Problem}
\newtheorem*{question*}{Question}
\newenvironment{definition*}
  {
   \innerdefinition}
  {\endinnerdefinition}
\theoremstyle{definition}
\newtheorem{defn}[theorem]{Definition}
\theoremstyle{remark}
\newenvironment{poc}{\begin{proof}[Proof of claim]}{\end{proof}}
\newcommand{\ceil}[1]{\lceil #1\rceil}
\newcommand{\floor}[1]{\lfloor #1\rfloor}
\newcommand{\eps}{\varepsilon}
\newcommand*{\abs}[1]{\lvert#1\rvert}
\title{An edge-spectral Erd\H{o}s--Stone--Simonovits theorem\\ and its stability}
\author{
Yongtao Li\thanks{Yau Mathematical Sciences Center, Tsinghua University, Beijing, China. Email: 
\url{yongtao_li@mail.tsinghua.edu.cn}. Supported by the Shuimu Scholar Program at Tsinghua University.} 
\and
Hong Liu\thanks{Extremal Combinatorics and Probability Group (ECOPRO), Institute for Basic Science (IBS), Daejeon, South Korea. Email: \url{hongliu@ibs.re.kr}. Supported by the Institute for Basic Science (IBS-R029-C4).}
\and
Shengtong Zhang\thanks{Department of Mathematics, Stanford University, CA, USA. Email: \url{stzh1555@stanford.edu}. Supported by the Craig Franklin Fellowship in Mathematics at Stanford University.}
}
\date{\today}
\begin{document}
\maketitle

\begin{abstract}
We study the extremal problem that relates the spectral radius $\lambda (G)$ of an $F$-free graph $G$ with its number of edges. 
Firstly, we prove that for any graph $F$ with chromatic number $\chi (F)=r+1\ge 3$, if $G$ is an $F$-free graph on $m$ edges, then $\lambda^2(G)\le {(1-\frac{1}{r} + o(1))2m}$. 
This provides a unified extension of both the Erd\H{o}s--Stone--Simonovits theorem and its vertex-spectral version due to Nikiforov, and confirms a conjecture proposed by Li, Liu and Feng.

We also establish the corresponding edge-spectral stability, showing that if $G$ is an $F$-free graph on $m$ edges with   $\lambda^2(G)=(1- \frac{1}{r} - o(1))2m$, then $G$ differs from a complete bipartite graph by $o(m)$ edges when $r=2$, and $G$ differs from an $r$-partite Tur\'{a}n graph by $o(m)$ edges when $r\ge 3$. 
This extends the classical Erd\H{o}s--Simonovits stability theorem. 

As an application of our method, we improve a result of Zhai, Lin and Shu by showing that if $\lambda (G)>\sqrt{m}$, then there exist two vertices in $G$ that have at least $\frac{1}{2}\sqrt{m} - O(1)$ common neighbors. This bound is the best possible as witnessed by a random construction. 
\end{abstract}





\section{Introduction}

Extremal graph theory investigates how local structural constraints affect global graph parameters with deep connections to other mathematical disciplines. The field originated with foundational questions about how dense a graph can be while avoiding certain forbidden subgraphs, giving rise to its most classical theme: Tur\'{a}n-type problems. The solution to such problems often involves both precise bounds and the characterization of graphs that achieve them; see  \cite{Bollobas78,FS13,Sim2013,GTAC}. 
The stability results describe how nearly extremal graphs -- those close to the maximum size -- must structurally resemble the extremal maximizer; see \cite{Sim1966}. 



A graph $G$ is $F$-free if it does not contain a subgraph isomorphic to $F$. A well-known theorem of Tur\'{a}n \cite{Tur1941} 
states that every $n$-vertex $K_{r+1}$-free graph $G$ satisfies $e(G)\le \left(1-\frac{1}{r} \right) \frac{n^2}{2}$; 
see \cite[p. 294]{Bollobas78} and \cite[p. 269]{AZ2014}. 
Since then there are various extensions and generalizations on Tur\'{a}n's theorem; see, e.g., \cite{BT1981,Bon1983}. 
The \emph{Tur\'{a}n number} $\mathrm{ex}(n,F)$ is defined as the maximum number of edges in an $n$-vertex $F$-free graph. 
A celebrated extension of Tur\'{a}n's theorem attributes to the Erd\H{o}s--Stone--Simonovits theorem \cite{ES46,ES66}, which states that the asymptotics of $\mathrm{ex}(n,F)$ is determined by the chromatic number of $F$. In particular, for any graph $F$ with chromatic number $\chi (F)=r+1$, we have 
$\mathrm{ex}(n,F) = \left(  1-\frac{1}{r} + o(1)\right) \frac{n^2}{2}$.

In this paper, we study the spectral extensions of Erd\H{o}s--Stone--Simonovits theorem.  Spectral graph theory aims to apply the eigenvalues and ranks of matrices associated with graphs to find the structural information of graphs. 
We refer to \cite{Huang2019,JTYZZ2021} for recent breakthroughs and applications. 
The adjacency matrix of an $n$-vertex graph $G$ is defined as $A_G=[a_{i,j}]_{i,j=1}^n$, where $a_{i,j}=1$ if $ij\in E(G)$, and $a_{i,j}=0$ otherwise. Observe that $A_G$ is a real and symmetric nonnegative matrix.  
Let $\lambda (G)$ be the spectral radius of $G$, which is defined as the maximum absolute value of the eigenvalues of $A_G$. 
Spectral extremal graph theory seeks the connections between graph properties and eigenvalues of the associated matrices. As a result of this topic, many of the classical extremal graph results have been generalized to spectral statements. 
For instance, Wilf \cite{Wil1986} proved a spectral extension of Tur\'{a}n's theorem, showing that if $G$ is an $n$-vertex $K_{r+1}$-free graph, then $\lambda (G)\le \left(1- \frac{1}{r} \right)n$. This was generalized by Nikiforov  \cite{Niki2009cpc} in 2009, who established the following spectral counterpart of the Erd\H{o}s--Stone--Simonovits theorem.
\begin{theorem}[Nikiforov~\cite{Niki2009cpc}] 
 \label{thm-spec-ESS} 
If $\chi (F)=r+1$ and $G$ is an $n$-vertex $F$-free graph, then 
\begin{equation*}  \label{eq-spec-ESS}
  \lambda(G) \le 
\left( 1-\frac{1}{r} + o(1)\right) n. 
\end{equation*}
\end{theorem}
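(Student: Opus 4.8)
The plan is to derive the theorem from Wilf's spectral Tur\'an theorem (available in the excerpt) by two reductions: first replace $F$ by a balanced complete multipartite graph, then argue that a spectral radius exceeding $(1-\tfrac1r)n$ by a constant factor forces that multipartite graph to appear.

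\textbf{Step 1 (reduction to a complete multipartite host).} Put $t:=|V(F)|$ and let $K_{r+1}(t)$ denote the complete $(r+1)$-partite graph with every part of size $t$. Since $\chi(F)=r+1$, a proper $(r+1)$-colouring of $F$ exhibits $F$ as a subgraph of $K_{r+1}(t)$, because each colour class has at most $t$ vertices. Hence every $F$-free graph is $K_{r+1}(t)$-free, and it suffices to prove: for every fixed $t$ and every $\varepsilon>0$, every $K_{r+1}(t)$-free graph $G$ on $n$ vertices with $n$ large satisfies $\lambda(G)<(1-\tfrac1r+\varepsilon)n$.

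\textbf{Step 2 (spectral radius forces a positive density of $K_{r+1}$'s -- the crux).} Assume for contradiction that $G$ is $K_{r+1}(t)$-free with $\lambda(G)\ge(1-\tfrac1r+\varepsilon)n$. I claim $G$ contains at least $\delta_0 n^{r+1}$ copies of $K_{r+1}$ for some constant $\delta_0=\delta_0(\varepsilon)>0$. If not, then by the graph removal lemma applied to $K_{r+1}$ one can delete a set $F$ of at most $\eta n^2$ edges, with $\eta=\eta(\delta_0)\to 0$ as $\delta_0\to 0$, so that $G-F$ is $K_{r+1}$-free; Wilf's theorem then gives $\lambda(G-F)\le(1-\tfrac1r)n$. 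Since $A_G=A_{G-F}+A_F$ with $A_F$ entrywise nonnegative and $\lambda(A_F)\le\sqrt{2|F|}\le\sqrt{2\eta}\,n$, Weyl's inequality yields $\lambda(G)\le(1-\tfrac1r)n+\sqrt{2\eta}\,n$; choosing $\delta_0$ small enough that $\sqrt{2\eta}<\varepsilon$ contradicts the hypothesis and proves the claim. The reason this step is delicate is that $\lambda(G)\ge(1-\tfrac1r+\varepsilon)n$ does \emph{not} place $e(G)$ above the Tur\'an threshold -- it only gives $e(G)\ge\tfrac12(1-\tfrac1r+\varepsilon)^2n^2<\mathrm{ex}(n,K_{r+1})$ -- so one cannot invoke edge-count supersaturation directly; one must instead exploit that \emph{few} copies of $K_{r+1}$ make $G$ removal-lemma-close to $K_{r+1}$-free and that $\lambda$ is robust under deleting $o(n^2)$ edges.

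\textbf{Step 3 (many $K_{r+1}$'s force $K_{r+1}(t)$).} Let $\mathcal H$ be the $(r+1)$-uniform hypergraph on $V(G)$ whose edges are the vertex sets of the copies of $K_{r+1}$ in $G$; by Step 2 it has at least $\delta_0 n^{r+1}$ edges. Erd\H{o}s's theorem on Tur\'an numbers of complete $(r+1)$-partite $(r+1)$-uniform hypergraphs gives, once $n$ is large, pairwise disjoint $t$-sets $A_1,\dots,A_{r+1}\subseteq V(G)$ every transversal of which is an edge of $\mathcal H$, that is, spans a $K_{r+1}$ in $G$. Then $a_ia_j\in E(G)$ whenever $a_i\in A_i$, $a_j\in A_j$, $i\ne j$, so $G[A_1\cup\dots\cup A_{r+1}]$ contains $K_{r+1}(t)$ -- contradicting $K_{r+1}(t)$-freeness. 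Hence $\lambda(G)<(1-\tfrac1r+\varepsilon)n$, and as $\varepsilon>0$ is arbitrary the theorem follows. Steps 1 and 3 are routine (Step 3 being Erd\H{o}s's classical counting argument for complete multipartite structures), so the entire weight of the argument sits in the removal-lemma step above.
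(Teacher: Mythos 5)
Your proof is correct and follows essentially the same route as the paper's: reduce to a $K_{r+1}$-free subgraph by deleting $o(n^2)$ edges, apply Wilf's theorem there, and absorb the deleted edges via $\lambda(G)\le\lambda(G')+\lambda(G\setminus G')\le \lambda(G')+\sqrt{2e(G\setminus G')}=\lambda(G')+o(n)$. The only difference is one of packaging: your Steps 2--3 re-derive the Erd\H{o}s--Frankl--R\"odl removal statement (graph removal lemma for $K_{r+1}$ combined with Erd\H{o}s's hypergraph counting argument to pass from $F$-free to ``few $K_{r+1}$'s''), which the paper simply cites as \Cref{lemEFR}.
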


\subsection{The Brualdi--Hoffman--Tur\'{a}n problem}

It is well-known that  for every graph $G$ with $m$ edges,\footnote{When we consider the extremal graph problem on a graph in terms of the number of edges, we usually ignore isolated vertices if there are no confusions.} we have
$\lambda (G)< \sqrt{2m}$. 
This bound follows from e.g.~$\lambda^2(G)< \mathrm{Tr}(A^2_G) = \sum_{v\in V}d(v)=2m$.   
This bound can be improved if $G$ has small clique number. In 2002, Nikiforov \cite{Niki2002} proved that if $G$ is $K_{r+1}$-free with $m$ edges, then 
\begin{equation}
    \label{eq-Niki-2002-cpc}
    \lambda (G)\le \sqrt{\left(1- \frac{1}{r} \right)2m}.
\end{equation}
Moreover, the extremal graphs attaining the equality are complete bipartite graphs for $r=2$, or a regular complete $r$-partite graph for every $r\ge 3$; see \cite{Niki2006-walks,Niki2009jctb}.

In contrast with Wilf's theorem, which connects the spectral radius of $G$ with its number of vertices, 
~(\ref{eq-Niki-2002-cpc}) bounds the spectral radius using the number of edges. Note that the edge-version in~(\ref{eq-Niki-2002-cpc}) extends both Tur\'an theorem and Wilf theorem by invoking the fact $\lambda (G)\ge 2m/n$. 
Particularly, if $G$ is an $m$-edge graph with $\lambda (G)> \sqrt{m}$, then $G$ contains a triangle. 
This result was initially studied by Nosal \cite{Nos1970} in 1970, and we refer to \cite{Niki2021, NZ2021, LLZ2024-book-qua} for recent extensions. 

It is worth to point out that the spectral assumption in terms of the number of edges is more general and versatile: the edge-spectral version not only implies that the vertex-spectral version, but also applies to sparse graphs with any edge density. As such, there have been great interest in studying the maximum spectral radius for $F$-free graphs with a given number of edges. One of the first such results was by Brualdi and Hoffman \cite{BH1985} in 1985, where they proved that any $m$-edge graph $G$ satisfies $\lambda (G)\le k-1$ if $m\le {k \choose 2}$. 
 This result was extended by Stanley \cite{Sta1987} who showed that 
 $\lambda (G)\le \frac{1}{2} 
 \left(\sqrt{8m+1} -1 \right).$
 The bound is optimal for complete graphs. 
See~\cite{HSF2001,Hong1988,Hong1998,Niki2002,ZC2005} for more related results. 
Motivated by the aforementioned results of Brualdi,  Hoffman and Tur\'{a}n, it is interesting to consider the following extremal graph problem. 

\begin{problem*}[Brualdi--Hoffman--Tur\'{a}n type problem] 
Given a graph $F$, 
   what is the maximum spectral radius over all $F$-free graphs on $m$ edges without isolated vertices?  
\end{problem*}

This line of investigation has emerged as an active research area in extremal spectral graph theory during recent years.   
For example, we refer to \cite{Niki2002,BN2007jctb,Niki2009jctb,Zhang2024,CSZ2024} for the case when $F$ is a clique; 
\cite{Niki2009,ZLS2021,ZS2022dm,LZS2024} when $F$ is a cycle; 
\cite{LLP2023,YLP2024,CY2024,LZZ2024} when $F$ is a fan; 
see also~\cite{Niki2021,NZ2021b,LLZ2024-book-qua,LLZ2025}.



The goal of this paper is to systematically investigate the Brualdi--Hoffman--Tur\'{a}n type problem for general graphs $F$ and establish the corresponding stability result.

\subsection{Extension of the Erd\H{o}s--Stone--Simonovits theorem}

Our first result resolves the Brualdi--Hoffman--Tur\'{a}n  problem asymptotically for any graph $F$. 

\begin{theorem} \label{conj-LLF}
If $\chi (F) = r+1 \ge 3$ and $G$ is an $F$-free graph with $m$ edges, then 
$$\lambda^2 (G)\le \left(1-\frac{1}{r} + o(1)\right)2m.$$ 
\end{theorem}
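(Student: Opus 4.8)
The plan is to reduce the edge-spectral statement to the vertex-spectral Erdős–Stone–Simonovits theorem (Theorem~\ref{thm-spec-ESS}) by controlling the structure of a near-extremal graph. Let $G$ be an $F$-free graph with $m$ edges and spectral radius $\lambda = \lambda(G)$, and let $\V{x}$ be a Perron eigenvector normalised so that $\max_v x_v = 1$, attained at some vertex $u$. The eigenvalue equation gives $\lambda = \lambda x_u = \sum_{v \sim u} x_v \le d(u)$, so $\lambda \le \Delta(G)$; more importantly, from $\lambda^2 x_u = \sum_{v \sim u}\sum_{w \sim v} x_w$ we get $\lambda^2 \le \sum_{v \sim u} d(v) = \sum_{v \sim u} d(v)$, i.e. $\lambda^2$ is at most the number of walks of length $2$ from $u$. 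The key point is that all the eigenvector weight, and essentially all the edges that matter, must lie close to $u$: vertices $v$ with $x_v$ bounded away from $0$ form a dense-ish set, and one shows that one may pass to a bounded-size (in terms of $1/\eps$) "core" neighbourhood carrying a $(1-o(1))$-fraction of $\lambda^2$.

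Concretely, the first step is a cleaning/localisation argument: fix $\eps>0$ and show that if $\lambda^2 > (1-\tfrac1r+\eps)2m$, then after deleting vertices of small eigenvector weight one obtains an $F$-free subgraph $G'$ on $n'$ vertices with $\lambda(G') \ge (1-o_\eps(1))\lambda(G)$ and, crucially, with $2m' \le (1+o(1))\, \lambda(G) \cdot \max_v x_v^{-1}\cdots$ — more precisely one wants $n'$ and $2m'$ both comparable to $\lambda$ so that $\lambda^2 \approx 2m$ forces $\lambda \approx n'$ (a near-regular, near-complete-multipartite regime). The cleanest route is: by the walk bound $\lambda^2 \le \sum_{v\sim u} d(v)$ and by $\sum_{v\sim u} d(v) \le$ (edges incident to $N(u)$), if $\lambda^2$ is close to $2m$ then almost all edges of $G$ are incident to $N(u)$, and $N(u)$ itself spans a subgraph whose average degree is forced to be large. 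Then one applies Theorem~\ref{thm-spec-ESS} to an appropriate bounded-order subgraph $H \subseteq G$ on $N(u) \cup \{u\}$ (or a Kővári–Sós–Turán / supersaturation-type refinement to get the right vertex count), yielding $\lambda(H) \le (1-\tfrac1r+o(1))|V(H)|$, and combines this with the relation $|V(H)|^2 \lesssim 2m$ coming from the edge count to close the loop and derive a contradiction with $\lambda^2 > (1-\tfrac1r+\eps)2m$.

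I expect the main obstacle to be the localisation step — making rigorous that a near-extremal $F$-free graph (for the edge-normalised problem) must concentrate its spectral mass and almost all of its edges onto a vertex set of size $\Theta(\lambda)$ that behaves like a near-balanced complete $r$-partite graph, \emph{uniformly in the sparse regime}. Unlike the vertex version, here $n$ can be arbitrarily large compared to $\sqrt{m}$, so one cannot simply invoke Erdős–Stone–Simonovits on all of $G$; the work is in extracting the right bounded-size "dense core". A natural tool is to track $\sum_v x_v^2 = 1$ together with $\lambda = \sum_{uv\in E} 2 x_u x_v$ and $2m = \sum_v d(v)$, and to run a weight-shifting/entropy-compression argument showing that any deviation from the complete $r$-partite profile on the core strictly decreases $\lambda^2/(2m)$. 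Once the core is identified, the remaining steps are the standard Erdős–Stone–Simonovits/Kővári–Sós–Turán estimates plus bookkeeping to absorb the $o(m)$ error terms.
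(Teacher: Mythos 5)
Your proposal correctly identifies the crux of the problem --- reducing to a ``dense core'' on $O(\sqrt m)$ (equivalently $\Theta(\lambda)$) vertices, since only there can removal-lemma/Erd\H{o}s--Stone--Simonovits arguments absorb their error terms --- but it does not actually carry out that reduction, and the localisation mechanism you sketch does not work as stated. From $\lambda^2 x_u=\sum_{v\sim u}\sum_{w\sim v}x_w$ and $x_w\le x_u$ you do get $\lambda^2\le\sum_{v\in N(u)}d(v)=e(N(u),\overline{N(u)})+2e(N(u))$, but with $\lambda^2\ge(1-\tfrac1r+\eps)2m$ this only shows that at least a $(1-\tfrac1r+\eps)$-fraction of the edges touch $N(u)$ --- for $r=2$ that is barely more than half, not ``almost all'' --- and it says nothing about $|N(u)|$, which can be arbitrarily large compared with $\sqrt m$ (think of high-degree vertices whose neighbours mostly have tiny eigenvector weight). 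Nor does largeness of $\sum_{v\in N(u)}d(v)$ force $N(u)$ to span many edges: those degrees can all point outside $N(u)$. Finally, the ``weight-shifting/entropy-compression argument showing that any deviation from the complete $r$-partite profile strictly decreases $\lambda^2/(2m)$'' is an assertion of a stability statement at least as strong as Theorem~\ref{thm:nikiforov-stability}; invoking it here is circular in spirit and is nowhere proved. So the proposal has a genuine gap precisely at the step you yourself flag as the main obstacle.

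For contrast, the paper closes this gap not by localising around the top-weight vertex but by a minimal-counterexample argument: if $G$ is a counterexample minimising $|V(G)|+|E(G)|$, then deleting any edge $uv$ must destroy the counterexample, which forces $x_ux_v>\tfrac{1}{8\sqrt m}$ for \emph{every} edge (Lemma~\ref{lem-product-entry}); separately, a second-moment/walk argument shows $\max_v x_v=O(m^{-1/4})$ whenever $\lambda^2\ge(1+\delta)m$ (Lemma~\ref{lem:max}). Together these make the Perron vector flat, $x_v=\Theta(m^{-1/4})$ for all $v$, whence $n=O(\sqrt m)$ by $\sum_v x_v^2=1$. Only then is the removal lemma applied, and the endgame uses Nikiforov's exact edge bound $\lambda^2\le(1-\tfrac1r)2m$ for $K_{r+1}$-free graphs rather than the vertex-spectral Erd\H{o}s--Stone--Simonovits theorem. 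If you want to salvage your outline, you should replace the $N(u)$-localisation by some argument of this type that gives a pointwise lower bound on the eigenvector over all non-isolated vertices; without it the ``bounded-size core'' does not exist.
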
 

The bound in \Cref{conj-LLF} is tight 
when $G$ is chosen as an $r$-partite Tur\'{a}n graph with $m$ edges. 
\Cref{conj-LLF} confirms a conjecture of Li, Liu and Feng \cite[Conjecture 1.31]{LLF2022}. Furthermore, combined with the Rayleigh bound $\lambda(G) \geq \frac{2m}{n}$, \Cref{conj-LLF}  implies both the Erd\H{o}s--Stone--Simonovits theorem and its vertex-spectral version in \Cref{thm-spec-ESS}.

The condition $\chi (F)\ge 3$ in \Cref{conj-LLF} 
is necessary, since \Cref{conj-LLF} does not hold for all bipartite graphs. 
For example, taking $F=K_{2,t}$,  
we can see that $K_{1,m}$ is $F$-free and $\lambda (K_{1,m})= \sqrt{m}$.  
We plan to explore the bipartite case in our future work. 

In 2006, 
Nikiforov \cite{Niki2006-walks} further generalized (\ref{eq-Niki-2002-cpc}). 
An $\ell$-walk is a sequence of $\ell$ 
vertices $v_1,v_2,\ldots ,v_{\ell}$ of $G$ such that 
$v_i$ is adjacent to $v_{i+1}$ for any $i=1,\ldots ,\ell -1$. 
Let $w_{\ell}(G)$ denote the number of $\ell$-walks in $G$. For example, we have $w_1(G)=n$, $w_2(G)=2m$ and
$w_3(G) = \sum_{v\in V}d_v^2$.

\begin{theorem}[Nikiforov \cite{Niki2006-walks}] \label{thm-Niki-2006}
    If $r\ge 2$ and $G$ is a $K_{r+1}$-free graph, then for every $\ell \ge 1$, we have
    $$\lambda^\ell (G) \leq \left( 1- \frac{1}{r}\right)\cdot w_\ell(G).$$
\end{theorem}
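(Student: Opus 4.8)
The plan is to diagonalise $A_G$, expand the all-ones vector in the eigenbasis, and feed the Perron vector into the classical Motzkin--Straus (Lagrangian) inequality for $K_{r+1}$-free graphs, treating odd and even $\ell$ separately. Since $\lambda(G)=\max_c\lambda(G_c)$ and $w_\ell(G)=\sum_c w_\ell(G_c)$ over the connected components $G_c$, it suffices to prove the bound for a single connected component, so assume $G$ connected. Write $A=A_G=\sum_{i=1}^n\mu_i\mathbf u_i\mathbf u_i^{\mathsf T}$ with $\lambda=\mu_1>\mu_2\ge\dots\ge\mu_n\ge-\lambda$, an orthonormal eigenbasis $\mathbf u_1,\dots,\mathbf u_n$, and Perron vector $\mathbf u_1>\mathbf 0$. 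Expanding $\mathbf 1=\sum_i b_i\mathbf u_i$ with $b_i=\langle\mathbf 1,\mathbf u_i\rangle$, the walk identity reads $w_\ell(G)=\langle\mathbf 1,A^{\ell-1}\mathbf 1\rangle=\sum_i\mu_i^{\ell-1}b_i^2$, and $b_1=\lVert\mathbf u_1\rVert_1>0$. The one external input is the Motzkin--Straus inequality: for $K_{r+1}$-free $G$ one has $\langle\mathbf w,A\mathbf w\rangle\le(1-\tfrac1r)\lVert\mathbf w\rVert_1^2$ for all $\mathbf w\ge\mathbf 0$; taking $\mathbf w=\mathbf u_1$ gives the key estimate $b_1^2=\lVert\mathbf u_1\rVert_1^2\ge(1-\tfrac1r)^{-1}\langle\mathbf u_1,A\mathbf u_1\rangle=\lambda/(1-\tfrac1r)$.

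When $\ell$ is odd, $\ell-1$ is even, so every summand $\mu_i^{\ell-1}b_i^2\ge0$; keeping only the Perron term,
\[
 w_\ell(G)\ \ge\ \lambda^{\ell-1}b_1^2\ \ge\ \frac{\lambda^{\ell-1}\cdot\lambda}{1-1/r}\ =\ \frac{\lambda^\ell}{1-1/r},
\]
which is the assertion (for $\ell=1$ this is Wilf's theorem). When $\ell=2k$ is even, $\ell-1$ is odd and the negative eigenvalues enter with a minus sign, so the Perron term no longer suffices; I would reduce the problem to the auxiliary inequality
\[
 (\star)\qquad b_1^2-\sum_{i\,:\,\mu_i<0}b_i^2\ \ge\ \frac{\lambda}{1-1/r}.
\]
Granting $(\star)$, and using $|\mu_i|\le\lambda$ with $2k-1\ge1$,
\[
 w_{2k}(G)=\lambda^{2k-1}b_1^2+\sum_{0<\mu_i<\lambda}\mu_i^{2k-1}b_i^2-\sum_{\mu_i<0}|\mu_i|^{2k-1}b_i^2\ \ge\ \lambda^{2k-1}\Bigl(b_1^2-\sum_{\mu_i<0}b_i^2\Bigr)\ \ge\ \frac{\lambda^{2k}}{1-1/r}.
\]

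Thus the even case is equivalent to $(\star)$, and I expect $(\star)$ to be the main obstacle. It is not a formal consequence of the cases already known: the instance $k=1$ of $(\star)$ already recovers the Brualdi--Hoffman--Tur\'an bound~(\ref{eq-Niki-2002-cpc}), and $(\star)$ is sharp for the Tur\'an graphs and for the complete bipartite graphs, so no slack can be lost. To prove it one must use Motzkin--Straus beyond the Perron vector -- for instance applied to sign-modified versions of $\mathbf 1$ (flipping the sign of the part of $\mathbf 1$ in the negative eigenspaces, whose entrywise absolute value keeps the same $\ell_1$-norm while not decreasing the quadratic form) together with an optimisation over such modifications, or else via an induction on $r$ built around a vertex of maximum ``spectral weight'', deleting its non-neighbourhood and invoking the bound for $K_r$-free graphs. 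Either way, the core difficulty is to push the a priori estimate $b_1^2-\sum_{\mu_i<0}b_i^2\le 2m/\lambda$ (true for every graph, from $\sum_i\mu_i b_i^2=2m$) down to the exact threshold $\lambda/(1-1/r)$.
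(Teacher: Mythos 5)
The paper does not actually prove this statement --- it is quoted from Nikiforov \cite{Niki2006-walks} and used as a black box --- so your argument has to stand on its own. The odd-$\ell$ half does: reducing to a connected component, writing $w_\ell(G)=\langle\mathbf 1,A^{\ell-1}\mathbf 1\rangle=\sum_i\mu_i^{\ell-1}b_i^2$, observing that every summand is nonnegative when $\ell-1$ is even, and lower-bounding the Perron term via Motzkin--Straus, $b_1^2=\lVert\mathbf u_1\rVert_1^2\ge\lambda(G)/(1-\tfrac1r)$, is correct and gives a clean proof of $\lambda^\ell\le(1-\tfrac1r)w_\ell$ for all odd $\ell$ (including Wilf's theorem at $\ell=1$).

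The even case, however, is a genuine gap, and it is the case this paper leans on hardest: $\ell=2$ is precisely Nikiforov's bound~(\ref{eq-Niki-2002-cpc}), the engine behind \Cref{conj-LLF}. Your reduction of the even case to
\[
b_1^2-\sum_{i\,:\,\mu_i<0}b_i^2\ \ge\ \frac{\lambda(G)}{1-1/r}
\]
is algebraically valid, but $(\star)$ is asserted, not proved; you offer two candidate strategies (sign-flipped test vectors, induction on $r$) without carrying either out. As you note yourself, $(\star)$ already implies the $\ell=2$ bound and is tight for Tur\'an graphs and complete bipartite graphs, so it is not a routine auxiliary lemma --- it carries essentially the entire difficulty of the theorem for even $\ell$. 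The obstruction is structural: Motzkin--Straus applied to a single nonnegative vector is blind to the negative spectrum, which is exactly what enters $w_\ell$ with a minus sign when $\ell-1$ is odd, so some genuinely new input about $K_{r+1}$-free graphs is required at this point. Until $(\star)$, or some other handle on the even case (e.g.\ an argument running through the walk vectors $A^k\mathbf 1$ and clique counts rather than the eigenexpansion of $\mathbf 1$, which is closer to Nikiforov's original route), is supplied, the proof is incomplete.
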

Note that the case $\ell=1$ above is Wilf's result and the case $\ell=2$ is the bound in~\eqref{eq-Niki-2002-cpc}.
Very recently, Chao and Yu \cite{Chao-Yu-2024} extended \Cref{thm-Niki-2006} 
using the entropy method, connecting powers of spectral radius to homomorphisms of trees in $K_{r+1}$-free graphs. 

Our framework of proof for~\Cref{conj-LLF} is amenable to prove the following result, which generalizes~\Cref{thm-Niki-2006} to graphs $G$ forbidding any general graph $F$. \Cref{thm:walk} is a unified extension of both \Cref{thm-spec-ESS} and \Cref{conj-LLF}. 

\begin{theorem}
    \label{thm:walk}
    If $\chi (F) = r+1\ge 3$ and $G$ is an $F$-free  graph, then 
    $$\lambda^{\ell} (G) \leq \left(1- \frac{1}{r} + o_{\ell, F}(1)\right) \cdot w_\ell(G).$$
\end{theorem}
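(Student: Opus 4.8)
The plan is to reduce \Cref{thm:walk} to \Cref{conj-LLF} by the same device that relates walk counts to spectral radius. Write $\V{x}$ for the Perron eigenvector of $A_G$ with $\|\V{x}\|_1 = 1$; then $w_\ell(G) \ge \mathbf{1}^\top A_G^\ell \mathbf{1}$ and, crucially, one can compare $w_\ell(G)$ with $\lambda^{\ell-2} w_2(G) = 2m\lambda^{\ell-2}$ on suitable graphs. More precisely, the key elementary inequality is that for any graph $G$, $w_{\ell+1}(G) \le \lambda(G)\, w_\ell(G)$ does \emph{not} hold in general (walks are counted by $\mathbf{1}^\top A^\ell \mathbf{1}$, and $A\mathbf{1}$ is not a scalar multiple of $\mathbf{1}$ unless $G$ is regular), so instead I would aim for a lower bound of the form $w_\ell(G) \ge c_\ell \cdot \lambda^{\ell-2}(G) \cdot 2m$ valid for the near-extremal graphs that arise, or directly argue about $\mathbf{1}^\top A^\ell \mathbf{1}$. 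The cleanest route: since $\lambda^2(G) \le (1-\frac1r+o(1))2m$ by \Cref{conj-LLF}, and $w_\ell(G) \ge \lambda^{\ell-2}(G)\cdot(\text{something}\,\gtrsim 2m)$ would give $\lambda^\ell(G) = \lambda^{\ell-2}(G)\lambda^2(G) \le \lambda^{\ell-2}(G)(1-\frac1r+o(1))2m \le (1-\frac1r+o(1))w_\ell(G)$.

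First I would handle the case $\ell = 2$, which is exactly \Cref{conj-LLF}, and the case $\ell$ odd versus even separately if needed. For even $\ell = 2k$, note $w_{2k}(G) = \mathbf{1}^\top A^{2k}\mathbf{1} = \|A^k \mathbf{1}\|_2^2 \ge \frac{(\mathbf{1}^\top A^k \mathbf{1})^2}{n'}$ is awkward; better is the observation $w_{2k}(G) \ge \frac{(w_{k+1}(G))^2}{w_2(G)}$ — no. Let me instead use the standard fact that the sequence $w_\ell(G)^{1/\ell} \to \lambda(G)$ and, more usefully, that $\frac{w_{\ell+2}(G)}{w_\ell(G)}$ is nondecreasing in $\ell$ (a log-convexity / Cauchy–Schwarz property of $\mathbf{1}^\top A^\ell \mathbf{1}$), so $\frac{w_{\ell+2}(G)}{w_\ell(G)} \ge \frac{w_4(G)}{w_2(G)} \ge \cdots$; combined with $w_{\ell+2}/w_\ell \le \lambda^2$ (which holds since $w_\ell^{1/\ell}$ increases to $\lambda$ ... actually $w_{\ell+2}/w_\ell \le \lambda^2$ does hold because $\mathbf{1}^\top A^{\ell+2}\mathbf{1} = \langle A^{(\ell+2)/2}\mathbf 1, \ldots\rangle \le \lambda^2 \mathbf{1}^\top A^\ell\mathbf 1$ by spectral decomposition of $\mathbf 1$), this gives $w_\ell(G) \ge w_2(G) \lambda^{\ell-2}(G) \cdot \prod(\text{ratios})$ — and the telescoping product of the increasing ratios $w_{j+2}/w_j$, each at most $\lambda^2$, but I want a lower bound. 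Cleanly: $w_\ell(G)/w_{\ell-2}(G) \cdot w_{\ell-2}(G)/w_{\ell-4}(G) \cdots \ge (w_4/w_2)^{(\ell-2)/2}$ when $\ell$ even, so $w_\ell \ge w_2 (w_4/w_2)^{(\ell-2)/2}$. This reduces the problem to showing $w_4(G) \ge (1-\frac1r+o(1))^{-1} \lambda^4(G)/\big((1-\frac1r+o(1))2m\big) \cdot \ldots$ — i.e. to an $\ell=4$ statement, which can itself be derived from \Cref{conj-LLF} applied together with the Cauchy–Schwarz bound $w_4(G) = \sum_v (A\mathbf 1)_v (A^3 \mathbf 1)_v$ ...

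Given the delicacy above, the approach I would actually commit to in writing is: \textbf{(1)} Prove the two auxiliary facts for every graph $G$ with Perron value $\lambda$: (a) $w_{\ell+2}(G) \le \lambda^2 w_\ell(G)$ (spectral decomposition of the all-ones vector), and (b) $w_{2k}(G) \ge w_2(G)\lambda^{2k-2}(G)\cdot\frac{1}{n}$ is too weak, so instead prove $w_{\ell}(G) \ge \frac{w_2(G)^{?}}{\cdots}$ — replace this with the robust statement $w_\ell(G) \ge \lambda^{\ell-2}(G)\cdot w_2(G)\cdot\big(\frac{w_2(G)}{\mathbf 1^\top\mathbf 1\cdot\lambda^2(G)}\big)^{?}$. \textbf{(2)} Observe that on an $F$-free graph, by \Cref{conj-LLF}, $\lambda^2 \le (1-\frac1r+o(1))2m = (1-\frac1r+o(1))w_2(G)$, so $\frac{w_2(G)}{\lambda^2(G)} \ge \frac{1}{1-1/r+o(1)} = \Theta(1)$. \textbf{(3)} Combine: if the error-absorbing steps in (1) each cost only a multiplicative $(1+o_{\ell,F}(1))$ because $\frac{w_2}{\lambda^2}$ is bounded away from $0$ and from $\infty$ (the latter also needs a reverse bound, e.g. $w_2(G) = 2m \le O(\lambda^2)$ since a graph with $m$ edges and bounded degree... no — use instead that on the near-extremal graphs $\lambda^2 \ge (1-\frac1r-o(1))2m$ or simply restrict to proving the bound is tight only there), then $\lambda^\ell = \lambda^{\ell-2}\lambda^2 \le \lambda^{\ell-2}(1-\frac1r+o(1))w_2(G) \le (1-\frac1r+o_{\ell,F}(1))w_\ell(G)$.

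The main obstacle is controlling the constants in the walk-count comparisons: the inequality $w_\ell(G) \gtrsim \lambda^{\ell-2}(G)\, w_2(G)$ fails badly for very irregular graphs (e.g. a star, where $w_2 = 2m$ but $w_\ell$ is much larger than $\lambda^{\ell-2} w_2 = m^{(\ell-2)/2}\cdot 2m$ — actually for a star $w_\ell \approx m^{\ell/2}$, so it is fine), and more to the point the $o_{\ell,F}(1)$ error term in \Cref{conj-LLF} must be shown to propagate to an $o_{\ell,F}(1)$ error after raising to the power $\frac{\ell}{\ell-2}$ or multiplying through — which is fine as long as $\ell$ is fixed. I expect the genuinely new content to be small: the theorem should follow from \Cref{conj-LLF} plus the classical monotonicity $w_{\ell}(G)^2 \le w_{\ell-1}(G)\,w_{\ell+1}(G)$ (log-convexity of walk counts, itself a Cauchy–Schwarz estimate on $\mathbf 1^\top A^\ell \mathbf 1$), which yields $\frac{w_\ell}{w_{\ell-1}} \ge \frac{w_{\ell-1}}{w_{\ell-2}} \ge \cdots \ge \frac{w_2}{w_1} = \frac{2m}{n}$ and, iterating from the top, $w_\ell \ge w_2\cdot(w_2/w_1)^{\ell-2}$ — no, rather $w_\ell \cdot w_2^{\ell-2} \ge w_3^{\ell-2}\cdot\ldots$; the correct consequence of log-convexity is $w_\ell^{a-b} \ge w_a^{?}$. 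I would therefore spend the bulk of the write-up nailing the precise chain of Cauchy–Schwarz inequalities linking $w_\ell(G)$, $w_2(G)$ and $\lambda(G)$, and then invoke \Cref{conj-LLF} as a black box for the $\ell=2$ anchor.
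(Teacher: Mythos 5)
There is a genuine gap: your entire reduction hinges on a lower bound of the form $w_\ell(G)\ge(1-o(1))\,\lambda^{\ell-2}(G)\,w_2(G)$, and this inequality is simply false for general graphs. Take $G$ to be the disjoint union of a clique $K_k$ and a perfect matching on $M\gg k^\ell$ edges: then $\lambda(G)=k-1$ and $w_\ell(G)\approx k(k-1)^{\ell-1}+2M\approx 2M$, whereas $\lambda^{\ell-2}(G)\,w_2(G)\approx 2M(k-1)^{\ell-2}$, which is larger by an unbounded factor. The log-convexity chain you propose cannot rescue this, because writing $\bm 1=\sum_i c_iu_i$ in the eigenbasis, the ratios $w_{j+1}/w_j$ are anchored at the bottom by $w_2/w_1=2m/n$, not by $\lambda$; so telescoping only yields $w_\ell\gtrsim(2m/n)^{\ell-2}w_2$, and $2m/n$ can be far smaller than $\lambda$ (this is exactly what happens in the example above). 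Your suggested escape — ``restrict to near-extremal graphs where $\lambda^2\ge(1-\frac1r-o(1))2m$'' — is circular: the theorem is an upper bound on $\lambda^\ell$ for \emph{all} $F$-free graphs, and you have no a priori mechanism forcing the graph into the regime where your walk comparison is valid. (A secondary issue: the auxiliary fact $w_{\ell+2}\le\lambda^2w_\ell$ and the log-convexity $w_\ell^2\le w_{\ell-1}w_{\ell+1}$ both require care with negative eigenvalues when the relevant exponents are odd, since $\sum_ic_i^2\lambda_i^k$ has sign-indefinite terms; you never pin these down.)

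The paper's proof supplies precisely the missing mechanism, and it is structurally different from what you propose. It takes a \emph{minimal counterexample} $G$ (minimizing $|V(G)|+|E(G)|$), and for each edge $ij$ compares the loss in $\lambda^\ell$ (at most $2\ell\lambda^{\ell-1}x_ix_j$) with the loss in $w_\ell$ (at least $w_{\ell-1}(i)$, bounded below via \Cref{lem:walks-lower} by $\lambda^{\ell-2}x_i/\max_kx_k$). Combined with the flatness bound $\max_kx_k=O(\lambda^{-1/2})$ of \Cref{lem:max-II}, this forces \emph{every} Perron coordinate to be $\Omega_{\ell,\varepsilon}(\lambda^{-1/2})$, hence $|V(G)|=O_{\ell,\varepsilon}(\lambda(G))$, i.e.\ the minimal counterexample is dense. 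Only then does the removal lemma (\Cref{lemEFR}) apply with an affordable error, and the final reduction is to Nikiforov's clique walk bound \Cref{thm-Niki-2006} — not to \Cref{conj-LLF} as in your plan. If you want to salvage your approach, you would need to reproduce this density reduction in some form; invoking \Cref{conj-LLF} as a black box together with walk-count convexity does not suffice.
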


\subsection{An edge-spectral stability result}

In 1966, 
Erd\H{o}s \cite{Erd1966Sta1,Erd1966Sta2} and Simonovits \cite{Sim1966} proved a stability phenomenon for Tur\'{a}n type problems. Given two graphs $G$ and $H$, the \emph{edit distance} $d(G, H)$ is the minimum number of edges we need to change (add/remove) to transfer one graph to the other. The Erd\H{o}s--Simonovits stability states that for any graph $F$ with $\chi (F)=r+1$, any $n$-vertex $F$-free graph $G$ with  
$\left( 1-\frac{1}{r}-o(1) \right)\frac{n^2}{2}$ edges must be of edit distance $o(n^2)$ from the balanced complete $r$-partite Tur\'{a}n graph $T_{n,r}$. 
We refer the readers to \cite{Fur2015} for a short proof.

The following vertex-spectral stability can be deduced from a result of Nikiforov \cite{Niki2009jgt}.

\begin{theorem}[Nikiforov \cite{Niki2009jgt}]  
\label{thm-niki-stability}
Let $F$ be a graph with $\chi (F)=r+1\ge 3$. 
For every $\varepsilon >0$, there exist $\delta >0$ 
and $n_0$ such that 
if  $G$ is an $F$-free graph on $n\ge n_0$ vertices  with $\lambda (G) \ge \left(1- \frac{1}{r} -\delta \right)n,$
then $G$ differs from $T_{n,r}$ in at most $\varepsilon n^2$ edges. 
\end{theorem}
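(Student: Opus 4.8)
The plan is to reduce the statement, via two standard ``black boxes'', to the clique bound~\eqref{eq-Niki-2002-cpc} together with the classical Erd\H{o}s--Simonovits edge stability. Since the conclusion does not mention $F$, I would first forbid a fixed complete multipartite graph in place of $F$: setting $s:=|V(F)|$ and fixing a proper $(r+1)$-colouring of $F$, every colour class has size at most $s$, so $F$ is a subgraph of $K:=K_{r+1}(s,\dots,s)$; hence every $F$-free graph is $K$-free, and it suffices to prove the theorem with ``$F$-free'' replaced by ``$K$-free''. Note $\chi(K)=r+1$, so $T_{n,r}$ is itself $K$-free and, by the classical Erd\H{o}s--Stone--Simonovits theorem, $e(H)\le(1-\tfrac1r+o(1))\binom n2$ for every $K$-free graph $H$ on $n$ vertices. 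I would fix $\varepsilon>0$ and introduce auxiliary constants $\varepsilon_1\gg\varepsilon_2\gg\delta>0$ (and $n_0$), to be pinned down at the end.

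The heart of the argument is to convert the spectral hypothesis into an \emph{edge} lower bound $e(G)\ge(1-\tfrac1r-\varepsilon_1)\binom n2$. Let $G$ be $K$-free on $n\ge n_0$ vertices with $\lambda:=\lambda(G)\ge(1-\tfrac1r-\delta)n$ and let $\mathbf x\ge 0$ be its Perron eigenvector with $\|\mathbf x\|_2=1$; from $\lambda x_v=\sum_{u\sim v}x_u\le\sqrt{d(v)}\le\sqrt n$ one gets $\|\mathbf x\|_\infty\le\sqrt n/\lambda=O(n^{-1/2})$. Because $G$ is $K=K_{r+1}(s,\dots,s)$-free, the $(r+1)$-uniform hypergraph whose edges are the vertex sets of the copies of $K_{r+1}$ in $G$ contains no complete $(r+1)$-partite sub-hypergraph with all parts of size $s$ (a transversal argument recovers a copy of $K$ in $G$ from such a sub-hypergraph); by Erd\H{o}s's theorem on complete multipartite uniform hypergraphs this hypergraph has density $o(1)$, i.e.\ $G$ has only $o(n^{r+1})$ copies of $K_{r+1}$. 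The removal lemma for $K_{r+1}$ then produces a $K_{r+1}$-free graph $G'\subseteq G$ with $e(G)-e(G')\le\varepsilon_2 n^2$. Deleting these edges changes $\mathbf x^{\top}A_G\mathbf x$ by at most $2\varepsilon_2 n^2\|\mathbf x\|_\infty^2=O(\varepsilon_2 n)$, so $\lambda(G')\ge\mathbf x^{\top}A_{G'}\mathbf x\ge\lambda-O(\varepsilon_2 n)\ge(1-\tfrac1r-\delta-O(\varepsilon_2))n$. Applying~\eqref{eq-Niki-2002-cpc} to the $K_{r+1}$-free graph $G'$ gives $\lambda(G')^2\le(1-\tfrac1r)\,2e(G')$, whence $e(G)\ge e(G')\ge\frac{\lambda(G')^2}{2(1-1/r)}\ge(1-\tfrac1r-\varepsilon_1)\binom n2$ once $\varepsilon_1$ exceeds a suitable constant multiple of $\delta+\varepsilon_2$ and $n$ is large.

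With the edge bound in hand the proof is finished by the classical Erd\H{o}s--Simonovits stability theorem applied with forbidden graph $K$: $G$ is $K$-free on $n$ vertices, $\chi(K)=r+1$, and $e(G)\ge(1-\tfrac1r-\varepsilon_1)\binom n2$, so for $\varepsilon_1$ small enough in terms of $\varepsilon$ and $n\ge n_0$ large enough, $G$ differs from $T_{n,r}$ in at most $\varepsilon n^2$ edges. Chasing quantifiers: given $\varepsilon$, first choose $\varepsilon_1$ from Erd\H{o}s--Simonovits stability, then $\varepsilon_2\ll\varepsilon_1$, then $\delta\ll\varepsilon_2$, then $n_0$ large enough for Erd\H{o}s's theorem and the removal lemma to apply; this produces the required $\delta$ and $n_0$.

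I expect the main obstacle to be the passage, in the second paragraph, from ``$K$-free'' (no blow-up of $K_{r+1}$) to ``$o(n^2)$-close to $K_{r+1}$-free''. This is exactly the step that bridges a general forbidden graph $F$ and a clique, and it seems to genuinely require a supersaturation/removal-lemma input; as a consequence the dependence of $\delta$ on $\varepsilon$ produced by this proof is of tower type, which is harmless for the qualitative statement but cannot be avoided along these lines. A secondary, more routine point to be careful about is the estimate $\lambda(G')\ge\lambda(G)-o(n)$: it uses crucially that $\lambda=\Omega(n)$, so that the Perron vector is spread out ($\|\mathbf x\|_\infty=O(n^{-1/2})$) and deleting $o(n^2)$ edges barely moves the Rayleigh quotient.
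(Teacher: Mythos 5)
Your proof is correct, but there is nothing in the paper to compare it against: the authors state \Cref{thm-niki-stability} as a quoted result, remarking only that it ``can be deduced from a result of Nikiforov \cite{Niki2009jgt}'' (whose original argument for large forbidden subgraphs is different from yours), and they never prove it in the text. Your derivation --- replace $F$ by the complete multipartite graph $K=K_{r+1}(s,\dots,s)$; use supersaturation for $K$ plus the graph removal lemma to pass to a $K_{r+1}$-free subgraph $G'$ with $e(G)-e(G')=o(n^2)$ and $\lambda(G')\ge\lambda(G)-o(n)$; apply the clique bound~\eqref{eq-Niki-2002-cpc} to convert the spectral hypothesis into $e(G)\ge(1-\tfrac1r-o(1))\binom n2$; finish with classical Erd\H{o}s--Simonovits stability --- is sound, and all the quantifier dependencies check out. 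Two remarks. First, your supersaturation-plus-removal step is exactly the content of the paper's \Cref{lemEFR} (Erd\H{o}s--Frankl--R\"odl), which you could cite directly rather than re-derive; this is also precisely the reduction the authors use elsewhere (e.g.\ in the proofs of \Cref{conj-LLF} and \Cref{thm:nikiforov-stability}), so your argument is very much in the spirit of the paper's toolkit. Second, the estimate $\lambda(G')\ge\lambda(G)-o(n)$ does not actually need the $\ell_\infty$ bound on the Perron vector: the cruder bound $\lambda(G)\le\lambda(G')+\lambda(G\setminus G')\le\lambda(G')+\sqrt{2\cdot o(n^2)}$ (the paper's \Cref{lem:remove-edges}) already suffices, since removing $o(n^2)$ edges can only decrease the spectral radius by $o(n)$; your sharper route via $\lVert\bm{x}\rVert_\infty=O(n^{-1/2})$ is valid but not necessary here.
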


\Cref{thm-niki-stability} provides an effective approach for 
studying the spectral problems when forbidding a non-bipartite graph $F$ and has brought several nice applications in recent years.

In this section, we show a similar stability result for the edge-spectral Tur\'{a}n problem. 
For two disjoint vertex sets $A,B$, 
we write $K_{A,B}$ for the complete bipartite graph on the parts $A$ and $B$. 
For a vertex set $C$, we write $T_{C,r}$ 
for an $r$-partite Tur\'{a}n graph on the vertex set $C$. 

\begin{theorem} 
\label{thm:nikiforov-stability}
Let $F$ be a graph with $\chi (F)=r+1\ge 3$.  
For every $\varepsilon > 0 $, 
there exist $\delta>0$ and $m_0$ such that the following holds. If $G$ is an $F$-free graph with $m\ge m_0$ edges and $\lambda^2 (G)\ge (1- \frac{1}{r} - \delta) 2m$, then 
\begin{itemize}
\item[\rm (a)] When $r = 2$, there exist disjoint vertex sets $A, B \subseteq V(G)$ such that $d(G, K_{A, B}) \leq \varepsilon m$.

\item[\rm (b)]
 When $r \geq 3$, there exists a vertex set $C \subseteq V(G)$ and a Tur\'{a}n graph $T_{C, r}$ on $C$ such that $d(G, T_{C, r}) \leq \varepsilon m$. 
 \end{itemize}
\end{theorem}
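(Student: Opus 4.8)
The plan is to locate inside $G$ an induced subgraph on $\Theta(\sqrt m)$ vertices that is itself essentially $F$-extremal, and then apply Nikiforov's \emph{vertex}-spectral stability (\Cref{thm-niki-stability}) to it. Let $x\ge 0$ with $\|x\|_2=1$ be a Perron eigenvector of $G$, so that $\lambda=2\sum_{uv\in E(G)}x_ux_v$, $\lambda x_v=\sum_{u\sim v}x_u$, and $x_v\le\sqrt{d_G(v)}/\lambda$ for every $v$. Combining the hypothesis $\lambda^2\ge(1-\tfrac1r-\delta)2m$ with \Cref{conj-LLF}, which gives $\lambda^2\le(1-\tfrac1r+o(1))2m$, we get $\lambda=(1-o(1))\sqrt{(1-\tfrac1r)2m}=\Theta(\sqrt m)$; here and below $o(1)$ denotes a quantity tending to $0$ as $\delta\to0$ and $m\to\infty$.

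The heart of the argument is a \emph{localisation} step: producing a vertex set $W\subseteq V(G)$ such that $G_0:=G[W]$ satisfies $|W|=(1+o(1))\sqrt{2m/(1-\tfrac1r)}$, $e(G_0)=(1-o(1))m$, and $\lambda(G_0)=(1-o(1))\lambda$. The relevant tool is the deletion estimate that removing a vertex $v$ changes the Rayleigh quotient of $x$ restricted to $V(G)\setminus\{v\}$ by at most $O(\lambda x_v^2)$; thus it is enough to find such a $W$ with $\sum_{v\notin W}x_v^2=o(1)$, and the same estimate then forces $e(G)-e(G_0)=o(m)$, since edges meeting $V(G)\setminus W$ contribute only $o(1)$ to $\sum_{uv\in E(G)}x_ux_v=\lambda/2$. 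For $r=2$ one should first set aside the graphs in which a single vertex is incident to a $(1-o(1))$-fraction of the edges: these near-stars are directly within $o(m)$ edges of a complete bipartite graph, and the near-extremality hypothesis rules out any other vertex of degree $\Omega(m)$, so in the remaining case $\Delta(G)=O(\sqrt m)$ (hence $\max_vx_v=O(m^{-1/4})$), which is what makes the weight estimates behave uniformly.

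Granting the localisation, the rest is routine. The graph $G_0$ is still $F$-free, and from the three properties of $W$ one gets $2e(G_0)=\sum_{v\in W}d_{G_0}(v)=(1+o(1))(1-\tfrac1r)|W|^2$ and hence $\lambda(G_0)\ge(1-\tfrac1r-o(1))|W|$. Given $\varepsilon>0$, choose $\varepsilon'=\varepsilon'(\varepsilon,r)$ so small that $\varepsilon'|W|^2\le\tfrac\varepsilon2 m$, let $\delta_0>0$ be the constant supplied by \Cref{thm-niki-stability} for $\varepsilon'$, and then, taking $\delta$ small and $m_0$ large, the $o(1)$ errors are small enough that $\lambda(G_0)\ge(1-\tfrac1r-\delta_0)|W|$. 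Then \Cref{thm-niki-stability} applied to $G_0$ produces a partition of $W$ into $r$ classes whose Tur\'{a}n graph $T_{W,r}$ satisfies $d(G_0,T_{W,r})\le\varepsilon'|W|^2\le\tfrac\varepsilon2 m$. Since $G_0=G[W]$ is an induced subgraph, $d(G,G_0)=e(G)-e(G_0)=o(m)\le\tfrac\varepsilon2 m$ for $m\ge m_0$, so the triangle inequality for edit distance gives $d(G,T_{W,r})\le\varepsilon m$. With $C:=W$ this is conclusion (b); for $r=2$ the two classes of the partition are the sides $A,B$ of the complete bipartite graph $K_{A,B}$ in conclusion (a), the near-star case having been handled separately.

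The main obstacle is the localisation step itself. A naive trimming of low-degree vertices does not work, because in the extremal graph $T_{n,r}$ every vertex already has degree only $\Theta(\sqrt m)$, so one cannot afford to discard a vertex merely for having degree $o(\sqrt m)$; and a priori the Perron weight could be spread thinly over $\omega(\sqrt m)$ vertices of moderate degree. Ruling this out is where $F$-freeness must be used in an essential way: such a spread-out structure should either already contain a copy of $F$ or force $\lambda^2$ below $(1-\tfrac1r-\delta)2m$. I expect the cleanest implementation is to revisit the proof of \Cref{conj-LLF} and keep track of its error terms, so that the near-equality there localises $G$ onto a single $\Theta(\sqrt m)$-vertex near-Tur\'{a}n core — at which point \Cref{thm-niki-stability} supplies the structure and finishes the proof.
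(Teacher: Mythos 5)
Your outline for $r\ge 3$ has the same architecture as the paper's proof — localise the Perron mass onto a core $W$ of size $(1+o(1))\sqrt{2m/(1-\frac1r)}$ carrying $(1-o(1))m$ edges, then invoke the vertex-spectral stability \Cref{thm-niki-stability} — but the localisation step, which you yourself flag as "the main obstacle," is exactly the heart of the proof and is not supplied. Your suggestion to "revisit the proof of \Cref{conj-LLF} and keep track of its error terms" does not work as stated: that proof runs through a \emph{minimal counterexample}, and minimality is what yields the lower bound $x_ux_v=\Omega(m^{-1/2})$ on every edge (\Cref{lem-product-entry}); an arbitrary near-extremal $G$ need not satisfy this. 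The paper replaces minimality by an explicit edge-deletion argument (\Cref{lem:removing-bad-edges}) to pass to a subgraph where all edge weights are large, and — for the clique case — carries out the localisation via the slackness in the Motzkin--Straus inequality: one shows $\sum_{ij\in E}(x_ix_j-\alpha^2)^2\le\delta$ with $\alpha^2=\sqrt{(1-\frac1r)/2m}$, and then runs a level-set recursion on $s_t=\sum_{i\in\Gamma_t}x_i^2$ to prove that the vertices with $x_i\not\approx\alpha$ carry negligible weight and are incident to $o(m)$ edges (\Cref{lem:nikiforov-stability-clique}). This is a genuinely new argument, not bookkeeping. The general $F$ is then reduced to the clique case by first forcing $|V'|=O(\sqrt m)$ (via \Cref{lem:max}) so that the removal lemma \Cref{lemEFR} deletes only $o(|V'|^2)=o(m)$ edges.

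The $r=2$ case in your proposal is not merely incomplete but based on a false dichotomy. The extremal examples for $r=2$ are \emph{all} complete bipartite graphs $K_{a,b}$ with $ab=m$, including highly unbalanced ones such as $K_{m^{1/3},\,m^{2/3}}$ or $K_{3,\,m/3}$: these are triangle-free with $\lambda=\sqrt m$, are not within $o(m)$ edges of a star, and have several vertices of degree $\omega(\sqrt m)$ (indeed $\Theta(m)$ in the second example). So "either a near-star or $\Delta(G)=O(\sqrt m)$" fails, and moreover in the unbalanced regime there is no core $W$ of size $\Theta(\sqrt m)$ with $e(G[W])=(1-o(1))m$, so the localise-then-apply-\Cref{thm-niki-stability} route cannot recover these targets (which is also why conclusion (a) allows arbitrary $K_{A,B}$ rather than a balanced Tur\'an graph). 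The paper handles this with a bespoke bipartite stability lemma (\Cref{lem:close-to-Kab}): after making $G$ bipartite with $o(m)$ deletions via a Mantel-type spectral argument (\Cref{lem-bipartite}), one uses $\sum_{i\in A}x_i^2=\sum_{j\in B}y_j^2=\frac12$ together with $\sum_{ij\in E}2x_iy_j\ge(1-\delta)\sqrt m$ to force $x_iy_j\approx\frac1{2\sqrt m}$ on almost all pairs, and a truncation of the eigenvector then exhibits the two sides $A,B$. A separate lemma (\Cref{lem:max-2}) is needed for general $F$ with $r=2$ precisely because the $\max_v x_v=O(m^{-1/4})$ bound of \Cref{lem:max} — which you also implicitly rely on — is unavailable when the target is unbalanced. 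You would need to supply analogues of both of these to complete part (a).
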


Similarly, combining with the Rayleigh bound~$\lambda(G)\ge \frac{2m}{n}$, the edge-spectral stability in \Cref{thm:nikiforov-stability} generalizes both the Erd\H{o}s--Simonovits stability and the Nikiforov vertex-spectral stability.
We remark that when $r = 2$, our edge-spectral version in \Cref{thm:nikiforov-stability} differs substantially from the other two stabilities, as the graph in \Cref{thm:nikiforov-stability} can be close to any complete bipartite graph instead of just the balanced complete bipartite graph. Moreover, we emphasize that the disjoint sets $A$ and $B$ are not necessarily a partition of vertices of $G$, e.g., when $G$ contains some isolated vertices or `bad' vertices with sparse edges.

In next subsection, we will provide a quick application of the edge-spectral stability theorem. 
We expect that~\Cref{thm:nikiforov-stability} will serve as a useful tool for solving edge-spectral Tur\'an problems. In subsequent work, we will return to this topic and use \Cref{thm:nikiforov-stability} to derive some interesting consequences for the Brualdi--Hoffman--Tur\'{a}n problem.

\subsection{Application: maximizing common neighbors}

The classical Tur\'{a}n result of F\"{u}redi \cite{Fur1996jcta} states that $\mathrm{ex}(n,K_{2,t+1})=\frac{1}{2}\sqrt{t}n^{3/2} +O(n^{4/3})$. 
In contrast, when forbidding bipartite graphs, the edge-spectral extremal problem has a rather different behavior. 
In 2021, Zhai, Lin and Shu \cite{ZLS2021} proved the following spectral result.  We refer the readers to \cite{Niki2009} for the case $t=1$ and to \cite{ZS2022dm,NZ2021b,LLZ2024-book-qua} for recent results in this direction.

\begin{theorem}[Zhai--Lin--Shu \cite{ZLS2021}]
\label{thm-ZLS-common}
Let $t\ge 2$ and $m\ge 16t^2$ be integers. 
If $G$ is a $K_{2,t+1}$-free graph with $m$ edges, then $\lambda (G)\le \sqrt{m}$, with equality if and only if $G$ is a star.  
\end{theorem}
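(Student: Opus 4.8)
The plan is to prove $\lambda(G)\le\sqrt m$ by a Perron-eigenvector argument localised at a vertex of maximal weight, and then to extract the equality case by tracking where the inequalities are tight; this is in effect the clean special case of the common-neighbour statement highlighted in the abstract. After reducing to the case that $G$ is connected, let $x=(x_v)$ be the positive Perron eigenvector scaled so that $x_{u^*}=\max_v x_v=1$, write $d=d(u^*)$, and note that applying the eigen-equation twice at $u^*$ gives the exact identity
\[
  \lambda^2 \;=\; \lambda^2 x_{u^*} \;=\; \sum_w (A_G^2)_{u^*,w}\,x_w \;=\; d \;+\; \sum_{w\ne u^*}\bigl|N(u^*)\cap N(w)\bigr|\,x_w .
\]

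The core estimate combines three simple facts. First, the eigen-equation and $x_v\le1$ give $\lambda x_w=\sum_{v\sim w}x_v\le d(w)$, so $x_w\le d(w)/\lambda$ for every $w$. Second, writing $c=\max_{w\ne u^*}\bigl|N(u^*)\cap N(w)\bigr|$, any $w\ne u^*$ with $|N(u^*)\cap N(w)|\ge1$ is adjacent to a vertex of $N(u^*)$, hence lies in $N(u^*)$ or at distance $2$ from $u^*$. Third --- the step I expect to be the main obstacle --- a short double-counting shows that the sum of $d(w)$ over these ``contributing'' vertices is at most $4(m-d)$: every edge incident to such a $w$ is either one of the $m-d$ edges not incident to $u^*$ (counted at most twice, once for each endpoint) or an edge of the form $w u^*$, and the bookkeeping giving the constant $4$ uses that $\sum_{v\in N(u^*)}\bigl|N(v)\cap N(u^*)\bigr|=2e(G[N(u^*)])$. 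Combining the three facts,
\[
  \sum_{w\ne u^*}\bigl|N(u^*)\cap N(w)\bigr|\,x_w \;\le\; \frac{c}{\lambda}\sum_{w\text{ contributing}}d(w) \;\le\; \frac{4c\,(m-d)}{\lambda},
\]
so $\lambda^2\le d+\tfrac{4c(m-d)}{\lambda}$. Since $G$ is $K_{2,t+1}$-free we have $c\le t$. If $\lambda\ge 4t$ then $\tfrac{4t}{\lambda}\le1$ and the bound yields $\lambda^2\le d+(m-d)=m$; if $\lambda<4t$ then $\lambda^2<16t^2\le m$. Either way $\lambda(G)\le\sqrt m$, and the hypothesis $m\ge16t^2$ is exactly the threshold at which the two branches meet.

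For the equality characterisation I would assume $\lambda^2=m$ and chase the equalities backwards. This forces $\lambda\ge4t$, and then equality in both displays: $c=t$ (so some pair shares $t$ common neighbours), every contributing $w$ satisfies $\lambda x_w=d(w)$ --- so all of its neighbours also have weight $1$ --- and the double-counting is tight, which forces the distance-$2$ set of $u^*$ to be empty, all $m-d$ edges not incident to $u^*$ to lie inside $N(u^*)$, and those edges to form a matching. But then any matched vertex $w$ has $|N(u^*)\cap N(w)|=1$, contradicting $c=t\ge2$ unless there are no such edges at all; hence $d=m$, and since $G$ is connected this means $G=K_{1,m}$. It is the hypothesis $t\ge2$ that closes this last case. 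Finally, I would observe that keeping $c$ rather than bounding it by $t$, the same estimate shows $\lambda^2>m$ forces two vertices to share more than $\tfrac14\sqrt m$ common neighbours --- already a Brualdi--Hoffman--Tur\'{a}n-type conclusion, with the sharper constant $\tfrac12$ in the abstract presumably requiring a finer double-counting than the crude factor $4$ used here.
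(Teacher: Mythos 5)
This statement is quoted from Zhai--Lin--Shu \cite{ZLS2021} and the paper gives no proof of it, so there is nothing internal to compare against; judged on its own, your argument is correct and gives a clean self-contained proof. The identity $\lambda^2=d+\sum_{w\ne u^*}|N(u^*)\cap N(w)|x_w$, the bound $x_w\le d(w)/\lambda$, and the dichotomy $\lambda\ge 4t$ versus $\lambda<4t$ all check out, and the equality analysis (matching edges inside $N(u^*)$ forcing $|N(u^*)\cap N(w)|=1<t$) is exactly where $t\ge 2$ is needed. The one step you should write out fully is the ``bookkeeping'' behind $\sum_{w\in W}d(w)\le 4(m-d)$, where $W$ is the set of contributing vertices: split $d(w)=d_{u^*}(w)+d_{\ne u^*}(w)$, note $\sum_{w\in W}d_{\ne u^*}(w)\le 2(m-d)$ since each edge avoiding $u^*$ is counted at most twice, and note $\sum_{w\in W}d_{u^*}(w)\le|W|\le 2(m-d)$ because every contributing vertex is incident to at least one edge avoiding $u^*$ (the identity $\sum_{v\in N(u^*)}|N(v)\cap N(u^*)|=2e(G[N(u^*)])$ that you invoke is not actually what is needed here). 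Your closing observation that retaining $c$ instead of $c\le t$ yields a codegree $>\tfrac14\sqrt m$ whenever $\lambda>\sqrt m$ is also correct and matches the paper's remark after \Cref{thm-ZLS-common}; the paper's route to the sharper constant $\tfrac12$ in \Cref{thm-common} is entirely different, going through \Cref{lem-lower-bound} (a weighted codegree average against $x_i^2x_j^2$) plus the edge-spectral stability machinery, rather than a refined version of your local double counting at $u^*$.
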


We point out here that the quadratic dependence on $m$ in terms of $t$ is necessary. Moreover, the above result implies that in any graph $G$ with $\lambda (G)> \sqrt{m}$, we can find a copy of $K_{2,t}$ with $t=\sqrt{m}/4$. On the other hand, there are many examples showing that such a graph $G$ cannot contain a copy of $K_{2,t}$ with $t=\sqrt{m}$. 
Using the edge-spectral stability~\Cref{thm:nikiforov-stability}, we can obtain the following improvement with the optimal bound on $t$.

\begin{theorem} \label{thm-common}
    If $G$ is an $m$-edge graph with 
    $\lambda(G) > \sqrt{m}$, then $G$ contains a copy of $K_{2, t}$ with $t \ge \sqrt{m} / 2 - O(1)$. Moreover, the bound on $t$ is the best possible. 
\end{theorem}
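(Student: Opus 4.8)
The plan is to argue by contradiction. Suppose $G$ has $m$ edges, $\lambda:=\lambda(G)>\sqrt m$, yet every pair of vertices of $G$ has at most $t$ common neighbours, where $t=\lceil\tfrac12\sqrt m\rceil-C$ for a large absolute constant $C$; equivalently $G$ is $K_{2,t+1}$-free. We derive a contradiction for $m$ large. Let $x\ge 0$ be a Perron eigenvector of $A_G$ normalised so that $\sum_v x_v^2=1$, and set $\sigma:=\sum_v x_v$. The $(u,v)$-entry of $A_G^2$ is the number of common neighbours of $u$ and $v$, which is $d_G(u)$ when $u=v$ and at most $t$ otherwise; since $x_v\ge 0$ for all $v$, expanding $\lambda^2=\sum_{u,v}(A_G^2)_{uv}x_ux_v$ gives
\[
\lambda^2 \;\le\; \sum_v d_G(v)\,x_v^2 + t\bigl(\sigma^2-1\bigr) \;=\; \lambda + \sum_{uv\in E(G)}(x_u-x_v)^2 + t\bigl(\sigma^2-1\bigr),
\]
where the last step uses the identity $\sum_v d_G(v)x_v^2=\lambda+\sum_{uv\in E(G)}(x_u-x_v)^2$. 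The proof then splits according to the size of the Dirichlet energy $\mathcal E:=\sum_{uv\in E(G)}(x_u-x_v)^2$.

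When $\mathcal E$ is small, $x$ is close to the uniform direction, which forces $G$ to be nearly regular of degree $\approx 2m/n$; then $\lambda=(1+o(1))\tfrac{2m}{n}$, so $n=(1+o(1))\tfrac{2m}{\lambda}<(2+o(1))\sqrt m$, and $\sigma^2=(1+o(1))n$. Substituting into the displayed inequality,
\[
t \;\ge\; \frac{\lambda^2-\lambda-\mathcal E}{\sigma^2-1} \;=\; (1+o(1))\,\frac{\lambda^3}{2m} \;>\; (1+o(1))\,\frac{m^{3/2}}{2m} \;=\; (1+o(1))\,\frac{\sqrt m}{2},
\]
contradicting $t\le\tfrac12\sqrt m-C$. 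This is exactly the step that pins down the constant $\tfrac12$: applied to a clique the estimate recovers the true codegree $\sqrt{2m}-O(1)$, and applied to a dense random graph it recovers the codegree $(\tfrac12+o(1))\sqrt m$ — the latter being the extremal configuration described below.

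When $\mathcal E$ is large, $x$ is far from uniform; the relevant near-extremal shapes are the star-like graphs and, after adding a few edges, the unbalanced complete bipartite graphs. One first disposes of the near-star regime — all but $o(m)$ edges incident to a single vertex — by a star/book spectral estimate showing $\lambda^2\le(1+o(1))m$ with the error term negative, so $\lambda<\sqrt m$, a contradiction. For the remaining graphs one uses the sharper localised estimate: with $u$ a vertex of maximum eigenvector entry and $V(G)=\{u\}\cup N\cup R$ where $N=N_G(u)$, the two-step eigenvalue equation at $u$ together with $K_{2,t+1}$-freeness (every vertex has at most $t$ neighbours in $N$, whence $e(G[N])\le t\,d_G(u)/2$) yields $\lambda^2\le m+\tfrac12 t\lambda-e(G[R])$. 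Combined with $\lambda^2>m$ this places $G$ close to the extremal configuration of~\eqref{eq-Niki-2002-cpc} for $r=2$, i.e. close to a complete bipartite graph; invoking the edge-spectral stability \Cref{thm:nikiforov-stability} for $r=2$ — which requires first certifying that $G$ is $F$-free for a fixed $3$-chromatic $F$, achieved by showing that $K_{2,t+1}$-freeness makes $G$ triangle-sparse enough to be cleaned, at negligible spectral cost, to a triangle-free graph — we obtain that $G$ differs by $o(m)$ edges from a complete bipartite graph $K_{A,B}$ with $|A|\,|B|=(1-o(1))m$. The larger of $A,B$ then has $\ge(1-o(1))\sqrt m$ vertices, and averaging the common-neighbour count over the $\Theta(m)$ pairs in the opposite part (the $o(m)$ altered edges causing a total loss of only $O(m\sqrt m)$, hence $o(\sqrt m)$ per pair) produces two vertices with at least $(1-o(1))\sqrt m>t$ common neighbours — a contradiction.

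For sharpness, with $n$ odd let $G$ be a uniformly random $\tfrac{n+1}{2}$-regular graph on $n$ vertices; then $\lambda(G)=\tfrac{n+1}{2}>\sqrt m$, $m=(1+o(1))n^2/4$, $\sqrt m=(1+o(1))n/2$, and a routine concentration argument shows that with high probability every pair of vertices has at most $\tfrac{n}{4}+O(\sqrt{n\log n})=(\tfrac12+o(1))\sqrt m$ common neighbours; so $G$ contains no $K_{2,(1/2+\varepsilon)\sqrt m}$ for any fixed $\varepsilon>0$, and the constant $\tfrac12$ in \Cref{thm-common} is best possible. The main obstacle is the large-$\mathcal E$ case: disposing cleanly of the near-star regime, and — more seriously — bridging the mismatch between our hypothesis (which forbids a \emph{bipartite} subgraph $K_{2,t+1}$) and \Cref{thm:nikiforov-stability} (stated for a fixed \emph{non-bipartite} forbidden graph), all while threading the exact constant $\tfrac12$ through both cases so that it matches the random construction.
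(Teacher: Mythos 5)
Your global inequality weights the codegrees by $x_ux_v$, which forces you to control $\sigma^2=(\sum_v x_v)^2$, and this is where the first genuine gap lies. The chain ``small Dirichlet energy $\Rightarrow$ $\bm{x}$ close to uniform $\Rightarrow$ $G$ nearly regular of degree $2m/n$ $\Rightarrow$ $\sigma^2=(1+o(1))n=(1+o(1))2m/\lambda$'' is not justified and is false as stated: a small value of $\mathcal E=\sum_v d_vx_v^2-\lambda$ says only that the $x^2$-weighted average degree is close to $\lambda$, and graphs such as a clique with a sparse attachment (or, absent connectivity, a union of cliques) have $\mathcal E=o(\lambda^2)$ without being nearly regular or having $\sigma^2\approx n$. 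Since the whole point of the constant $\tfrac12$ is the bound $\sigma^2-1\le(2+o(1))\lambda$, this case is not closed. The paper avoids the issue entirely by weighting with the probability measure $x_i^2x_j^2$: Lemma~\ref{lem-lower-bound} shows via a Cauchy--Schwarz chain that $\sum_{i,j}C_{i,j}x_i^2x_j^2\ge\lambda^3/(2m)$, the off-diagonal part is then at most $t$ with no normalisation needed, and the only remaining quantity to control is the diagonal term $\sum_i d_ix_i^4\le 2m(\max_ix_i)^4$. The case split is therefore on $\max_ix_i$ versus $m^{-1/4}$, not on $\mathcal E$.

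The second gap is the one you yourself flag as the main obstacle, and your proposed resolution does not work. You cannot ``clean $G$ to a triangle-free graph at negligible spectral cost'' using $K_{2,t+1}$-freeness: the random graph $G(n,\tfrac12)$ has all codegrees $\approx\tfrac12\sqrt m$ yet contains $\Theta(m^{3/2})$ triangles and is at edit distance $\Theta(m)$ from every triangle-free graph, so there is no fixed $3$-chromatic $F$ available and \Cref{thm:nikiforov-stability} cannot be invoked. The paper's route is \Cref{lem:max-2}, which is tailored to exactly this situation: it assumes only $\lambda^2\ge(1-\delta)m$ together with $\max_ix_i>\delta^{-1}m^{-1/4}$ (no forbidden subgraph), and extracts directly a bipartite subgraph of large spectral radius to which \Cref{lem:close-to-Kab} applies, yielding $d(G,K_{A,B})\le\varepsilon m$. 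Two smaller points: before averaging codegrees you must rule out $|A|=1$ (the paper does this with the maximum-degree bound of \Cref{lem-Delta}; a near-star has $\lambda\le\sqrt m$), and the averaging must be over pairs in the \emph{smaller} side $A$, whose number of pairs need not be $\Theta(m)$. Your sharpness construction is fine and essentially equivalent to the paper's $G(n,\tfrac12+\varepsilon)$.
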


\paragraph{Our approach.} 
To prove \Cref{conj-LLF}, we use the removal lemma to pass from $G$ to a $K_{r+1}$-free subgraph $G'$ by deleting $o(|V(G)|^2)$ edges. If $G$ is dense, we would be done with the help of Nikiforov's result as the removal of these $o(|V(G)|^2)$ edges contributes $o(\sqrt{m})$ to the spectral radius of $G$. Our key novel step in the proof of \Cref{conj-LLF} is to make a reduction to the dense case. To this end,  
we first show that if $G$ is a minimal counterexample, then the weight of every edge $uv$ in $G$ with respect to the Perron--Frobenius vector is large: $x_ux_v=\Omega (m^{-1/2})$ (\Cref{lem-product-entry}). Secondly, 
we bound the $\infty$-norm of the unit Perron--Frobenius eigenvector of $G$ by showing that $\max_{v\in V}\{x_v\}=O(m^{-1/4})$ (\Cref{lem:max}). Combining these two bounds, we can then deduce that the Perron--Frobenius vector of any possible minimal counterexample must be flat, i.e.~each of its coordinates is of order $\Theta(m^{-1/4})$. As a result, this minimal counterexample $G$ must be dense: $|V(G)|=O(\sqrt{m})$. This approach can be extended to prove \Cref{thm:walk}. Different from the edge-spectral setting, we will prove a reduction so that $|V(G)|=O(\lambda (G))$ in the walk-spectral setting for \Cref{thm:walk}.  

For Theorem \ref{thm:nikiforov-stability}, we deliver the proof in three stages. Firstly, we deal with the case when we forbid triangles (\Cref{thm-stability-triangle}). Using the second moment of the spectral radius of $G$, we can remove $o(m)$ edges from $G$ to obtain a bipartite subgraph $G'$ (\Cref{lem-bipartite}). We then show that bipartite graphs with spectral radius close to $\sqrt{m}$ must be close to complete via truncation on its Perron--Frobenius eigenvector (\Cref{lem:close-to-Kab}). Secondly, when we forbid a clique $K_{r+1}$, where $r\ge 3$ (\Cref{thm-stability-clique-4}), we exploit the slackness from the use of Motzkin--Strauss inequality in Nikiforov's bound (\ref{eq-Niki-2002-cpc}). 
This is the bulk of our proof for \Cref{thm:nikiforov-stability}. We analyze the Perron--Frobenius eigenvector of $G$ and use the slackness to show that most of the edges in $G$ has Perron--Frobenius weight around $\Theta(m^{-1/2})$. From this fact, we are able to locate a subset $V'$ of vertices in which every vertex has weight about $\Theta(m^{-1/4})$. Furthermore, majority of the edges lie within $V'$. As a result, the induced subgraph $G'$ on $V'$ satisfies $d(G,G')=o(m)$ and $|V'|\le (1+o(1))\sqrt{2e(G')/(1-\frac{1}{r})}$, which implies $\lambda (G')\ge \left(1- \frac{1}{r} - o(1) \right)|V'|$ (\Cref{lem:nikiforov-stability-clique}). We can then apply the vertex-spectral stability to $G'$ to finish the proof. Finally, when forbidding a general graph $F$ with $\chi (F)=r+1$, we first remove few ``bad'' edges of $G$ that have small contributions to $\lambda (G)$ to obtain a large subgraph $G'$ on vertex set $V'$ such that $x_i'x_j'=\Omega (m^{-1/2})$ for every $ij\in E(G')$ (\Cref{lem:removing-bad-edges}). If $r\ge 3$, we can apply \Cref{lem:max} to show that $\max_{v\in V'}\{x'_v\}=O(m^{-1/4})$, which implies $|V'|=O(\sqrt{m})$.  
Then we can use removal lemma to pass to a $K_{r+1}$-free graph $G''$ by removing $o(|V'|^2)=o(m)$ edges of $G'$. This allows us to reduce the general case to the clique case that we have established previously. For the case when $r=2$, \Cref{lem:max} cannot be applied and 
it is possible that $\max_{v\in V'}\{x_v'\}=\Omega(m^{-1/4})$. 
We need to treat this case separately (\Cref{lem:max-2}). Here we show that $G'$ contains a bipartite subgraph $G''$ with large spectral radius, and we can use \Cref{lem:close-to-Kab} to finish the proof. 

For \Cref{thm-common}, we first show that if 
$\lambda(G)> \sqrt{m}$, then 
$t+ 2m\cdot (\max_{i\in V} x_i)^4 >\frac{1}{2} \sqrt{m}$, where $t$ is the maximum codegree. If $\max_{i\in V} \{x_i\} = O(m^{-1/4})$, then $t\ge \frac{1}{2}\sqrt{m} - O(1)$, as desired. If $\max_{i\in V}\{x_i\} =\Omega (m^{-1/4})$, then we can use the edge-spectral stability (\Cref{lem:max-2}) to get two disjoint vertex sets $A,B\subseteq V(G)$ such that $d(G,K_{A,B})\le \varepsilon m$. Assume that $|A|\le |B|$. As $\lambda(G)> \sqrt{m}$, we can use a structural result (\Cref{lem-Delta}) to show that $|A|\ge 2$. Then by a standard average argument, there exist two vertices of $A$ that have at least $(1-o(1))\sqrt{m}$ common neighbors in $B$, as desired.

\paragraph{Organization.}
In Section \ref{sec:prelim}, we provide some preliminaries. 
In Section \ref{sec:ESS}, 
we give the proofs of Theorems \ref{conj-LLF} and \ref{thm:walk}. In Section \ref{sec:edge-stability}, we present the proof of Theorem \ref{thm:nikiforov-stability}. 
In Section \ref{sec:common}, we give the proof of \Cref{thm-common} and in~\Cref{sec:concluding} we provide some concluding remarks. 
In Appendix \ref{sec-A}, we give an alternative proof of Theorem \ref{thm:nikiforov-stability} in the case when the forbidden graph $F$ is a clique.

\paragraph{Notation.} 
We write $G=(V,E)$ for a simple and undirected graph with vertex set $V=\{v_1,\ldots ,v_n\}$ and edge set $E=\{e_1,\ldots ,e_m\}$, where we admit 
$n=|V|$ and $m=|E|$. 
We write $uv$ for an edge if $u$ and $v$ are adjacent.  
The degree of a vertex $v\in V(G)$ is denoted by 
$d_v$, and the set of neighbors of $v$ is denoted by $N(v)$. 
The maximum degree of $G$ is denoted by 
$\Delta (G)$. 
Let $w_{\ell} (G)$ be the number of walks in $G$ consisting of $\ell$ vertices (we call it an $\ell$-walk). 
Let $\bm{1}_v$ be the characteristic vector of a vertex $v$.  
For $U\subseteq V(G)$, 
we write $d_U(v)$ for the number of vertices of $U$ that are adjacent to the vertex $v$.  
For two disjoint sets $U,W\subseteq V(G)$, 
we write $e(U)$ for the number of edges with endpoints in $U$, and $e(U,W)$ for the number of edges between $U$ and $W$. 
The edit distance between two graphs $G$ and $H$ is denoted by $d(G,H)$, which is the minimum number of edges we need to change (delete or add) to transform $G$ into $H$.  

The adjacency matrix $A_G$ 
of a graph $G$ is a symmetric matrix of dimension $|V|\times |V|$ with $a_{i,j}=1$ if and only if $ij\in E$, and $a_{i,j}=0$ otherwise. 
Since $A_G$ is real and symmetric, 
all eigenvalues of $A_G$ are real and can be reordered as $\lambda_1\ge \lambda_2\ge \cdots \ge \lambda_n$.  The eigenvalues of a graph $G$ are defined as the eigenvalues of the matrix $A_G$. 
The spectral radius $\lambda (G)$ is defined to be the maximum absolute value of eigenvalues of $G$. By the Perron--Frobenius theorem, 
the spectral radius of a nonnegative matrix is actually a largest eigenvalue. Furthermore, there exists a unit nonnegative eigenvector $\bm{x}\in \mathbb{R}^n$  corresponding to $\lambda (G)$. 
Such a vector is called the Perron--Frobenius eigenvector. For each vertex $v\in V(G)$, 
we write $x_v$ for the coordinate of $\bm{x}$ corresponding to $v$. 
In the setting of graphs, 
the eigen-equation $A_G\bm{x}= \lambda (G) \bm{x}$ can be interpreted as $\lambda(G) x_v = \sum_{u\in N(v)} x_u$, and the Rayleigh quotient implies 
$\lambda (G) = 2 \sum_{uv\in E} x_ux_v$. 
Here and in the rest of the paper, we denote by $\sum_{uv \in E}$ the sum over each edge in $E$ once. When $G$ is a bipartite graph with given partite sets $A$ and $B$, we additionally assume that $u \in A$ and $v \in B$.

\section{Preliminary results}\label{sec:prelim}
In this section, we collect a number of results that we will use throughout our proofs. The first one is a result of  Erd\H{o}s, Frankl and R\"{o}dl \cite{EFR1986} which allows us to delete $o(n^2)$ edges and transform an $F$-free graph to a $K_{r+1}$-free graph. 

\begin{lemma}[Erd\H{o}s--Frankl--R\"{o}dl \cite{EFR1986}] \label{lemEFR}
Let $F$ be a graph with $\chi (F)=r+1$. 
For any $\varepsilon >0$, 
there is $n_0$ such that 
if $n\ge n_0$ and $G$ is an $n$-vertex $F$-free graph, 
then we can remove at most $\varepsilon n^2$ edges from $G$ 
so that the remaining graph is $K_{r+1}$-free.  
\end{lemma}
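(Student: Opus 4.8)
The plan is to invoke Szemer\'edi's regularity lemma together with an embedding (counting) argument, which is the standard route for this statement. Write $t = |V(F)|$ and let $\varepsilon$ denote the target error parameter. Fix auxiliary constants $d>0$ and $\eta>0$ satisfying $\eta \ll d \ll \varepsilon, 1/t$, and let $M_0 = M_0(\eta)$ be the bound on the number of parts from the regularity lemma. Apply the $\eta$-regularity lemma to $G$ to obtain a partition $V(G) = V_0 \cup V_1 \cup \cdots \cup V_k$ with $k \le M_0$ (and $k$ at least some threshold $m_0$ which we may assume), $|V_0| \le \eta n$, all $|V_i|$ equal for $i \ge 1$, and all but at most $\eta k^2$ of the pairs $(V_i,V_j)$ being $\eta$-regular.

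Next I would perform the usual cleaning: delete from $G$ every edge incident to $V_0$, every edge lying inside some part $V_i$ with $i\ge 1$, every edge lying in an irregular pair, and every edge lying in a regular pair of density less than $d$. The number of deleted edges is at most
$$\eta n^2 + k\binom{n/k}{2} + \eta k^2 (n/k)^2 + \binom{k}{2} d (n/k)^2 \le \Big(2\eta + \tfrac{1}{2m_0} + \tfrac{d}{2}\Big) n^2,$$
which is below $\varepsilon n^2$ once $\eta$, $d$ are small and $m_0$ is large (all of which we arranged). Let $G'$ be the resulting graph and let $R$ be the reduced graph on $\{1,\dots,k\}$ in which $ij$ is an edge exactly when $(V_i,V_j)$ is $\eta$-regular with density at least $d$. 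By construction every edge of $G'$ joins two distinct parts $V_i, V_j$ with $ij \in E(R)$.

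It then remains to check that $G'$ is $K_{r+1}$-free, and since any $K_{r+1}$ in $G'$ must have its $r+1$ vertices spread over $r+1$ distinct parts forming a $K_{r+1}$ in $R$, it suffices to show $R$ is $K_{r+1}$-free. Suppose to the contrary that parts $V_{i_1},\dots,V_{i_{r+1}}$ induce a $K_{r+1}$ in $R$. As $\chi(F) = r+1$, $F$ embeds into the complete $(r+1)$-partite graph with parts of size $t$; since each pair $(V_{i_a}, V_{i_b})$ is $\eta$-regular of density at least $d$ and each $|V_{i_a}| \ge (1-\eta)n/M_0$ is large for $n \ge n_0$, the embedding lemma (applicable because $\eta$ was chosen small relative to $d$ and $t$) produces a copy of $F$ in $G$, contradicting $F$-freeness. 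Hence $R$, and therefore $G'$, is $K_{r+1}$-free.

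The one genuinely delicate point is the embedding step: one must set up the quantifier chain $\eta \ll d \ll \varepsilon, 1/t$ carefully so that the counting/key lemma indeed finds $F$ inside the blow-up structure certified by a $K_{r+1}$ in $R$. The remaining work — the edge-count of the cleaning and the observation that $K_{r+1}$'s in $G'$ live across distinct parts — is routine bookkeeping.
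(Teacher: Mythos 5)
Your proposal is correct and is the standard regularity-plus-embedding proof of this statement. The paper itself offers no proof, citing Erd\H{o}s--Frankl--R\"{o}dl directly, and the argument in that reference proceeds along essentially the same lines as yours (Szemer\'edi regularity, cleaning, and the key/embedding lemma applied to a $K_{r+1}$ in the reduced graph using $F\subseteq K_{r+1}(t)$ since $\chi(F)=r+1$), so there is nothing to compare beyond noting that your quantifier chain $\eta\ll d\ll\varepsilon,1/t$ is set up correctly for the embedding step.
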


The following lemma states that if $G$ is a minimal graph with a certain bound on the spectral radius, then the product of the Perron--Frobenius weights of each edge is at least $\Omega (m^{-1/2})$.  

\begin{lemma} \label{lem-product-entry}
Let $r\ge 2$, $\varepsilon>0$, $C>0$ and $\bm{x}=(x_v)_{v\in V(G)}$ be the unit Perron--Frobenius vector of $G$. 
Let $G$ be an $m$-edge graph with 
$  \lambda (G)> \sqrt{(1-\frac{1}{r} + \varepsilon )2m} + C$.  
 If for any $m'$-edge subgraph $G'\subseteq G$, $\lambda(G')\le 
\sqrt{(1-\frac{1}{r} + \varepsilon )2m'} + C$, then for any edge $uv \in E(G)$, we have  $x_ux_v > \frac{1}{8\sqrt{m}}.$
\end{lemma}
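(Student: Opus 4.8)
The plan is to argue by contradiction via an edge-deletion/contraction-of-weight argument. Suppose there is an edge $uv \in E(G)$ with $x_ux_v \le \frac{1}{8\sqrt m}$. The idea is that such an edge contributes very little to the Rayleigh quotient $\lambda(G) = 2\sum_{e \in E} x_e$ (writing $x_e = x_ax_b$ for $e = ab$), so deleting it should barely decrease the spectral radius — and yet, by the minimality hypothesis applied to the $(m-1)$-edge subgraph $G' = G - uv$, we get a bound on $\lambda(G')$ that is strictly better (after accounting for the change $m \to m-1$) than the bound $\lambda(G)$ violates, producing a contradiction once the arithmetic is done carefully.

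Concretely, first I would record the Rayleigh lower bound $\lambda(G') \ge \bm{x}'^{\top} A_{G'} \bm{x}' / \|\bm{x}'\|^2$ for a suitable test vector $\bm{x}'$. The natural choice is to take the restriction of $\bm x$ to $V(G')$ (or just $\bm x$ itself, since deleting an edge does not delete vertices). Then $\bm{x}^\top A_{G'} \bm x = \bm x^\top A_G \bm x - 2x_ux_v = \lambda(G) - 2x_ux_v$ and $\|\bm x\| = 1$, so $\lambda(G') \ge \lambda(G) - 2x_ux_v \ge \lambda(G) - \frac{1}{4\sqrt m}$. On the other hand, the minimality assumption gives $\lambda(G') \le \sqrt{(1-\frac1r+\varepsilon)2(m-1)} + C$. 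Combining, $\lambda(G) \le \sqrt{(1-\frac1r+\varepsilon)2(m-1)} + C + \frac{1}{4\sqrt m}$. The goal is to show this contradicts the hypothesis $\lambda(G) > \sqrt{(1-\frac1r+\varepsilon)2m} + C$, i.e.\ to show
$$\sqrt{(1-\tfrac1r+\varepsilon)2m} \ge \sqrt{(1-\tfrac1r+\varepsilon)2(m-1)} + \frac{1}{4\sqrt m}.$$
Setting $\kappa = (1-\frac1r+\varepsilon)2 \ge 1$ (using $r \ge 2$, so $1 - \frac1r \ge \frac12$; actually $\kappa = 2(1-\frac1r+\varepsilon) \ge 1$ since $1-\frac1r \ge \frac12$), the left side minus the middle term is $\sqrt{\kappa m} - \sqrt{\kappa(m-1)} = \frac{\kappa}{\sqrt{\kappa m}+\sqrt{\kappa(m-1)}} \ge \frac{\kappa}{2\sqrt{\kappa m}} = \frac{\sqrt\kappa}{2\sqrt m} \ge \frac{1}{2\sqrt m} > \frac{1}{4\sqrt m}$, which is exactly what is needed.

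The main subtlety — and where I would be most careful — is the bookkeeping when $G'$ is required to have \emph{no isolated vertices} (a convention the paper adopts) or when deleting $uv$ drops one of $u,v$ to degree zero. If that happens, one simply discards the isolated vertex; this does not change $\lambda$, and the minimality hypothesis as stated applies to ``any $m'$-edge subgraph,'' so it still applies to $G - uv$ with its isolated vertices removed. One should also make sure the constant is chosen with a little room: the computation above actually gives $\frac{1}{2\sqrt m}$ on the right versus $\frac{1}{4\sqrt m}$ needed, so there is a comfortable factor-of-two slack, which is presumably why the threshold in the statement is $\frac{1}{8\sqrt m}$ rather than something tighter — it leaves margin for the $\sqrt{\kappa}\ge 1$ step and any rounding. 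I expect no genuine obstacle here; the only real work is to present the one-line convexity estimate $\sqrt{\kappa m} - \sqrt{\kappa(m-1)} \ge \frac{1}{2\sqrt m}$ cleanly and to handle the trivial isolated-vertex edge case, and then the contradiction is immediate.
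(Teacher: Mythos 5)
Your proposal is correct and follows essentially the same route as the paper: delete the offending edge, lower-bound $\lambda(G')$ via the Rayleigh quotient with the original Perron vector, and derive a contradiction with the minimality hypothesis. The paper leaves the final estimate $\sqrt{\kappa m}-\sqrt{\kappa(m-1)}>\frac{1}{4\sqrt m}$ implicit, whereas you spell it out; otherwise the arguments coincide.
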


\begin{proof}
Suppose on the contrary that $x_ux_v \le 8^{-1}m^{-1/2}$ 
for some edge $uv\in E(G)$.  Then removing the edge 
$uv$ from $E(G)$, we get a subgraph $G'$ with $m'=m-1$ edges and 
\begin{align*}
\lambda (G') \ge  \left(2\sum_{ij \in E(G)} x_ix_j \right) - 2x_ux_v  \ge \lambda (G) - \frac{1}{4\sqrt{m}}  > 
 \sqrt{\Bigl( 1- \frac{1}{r} + 
 \varepsilon \Bigr)2m'} + C. 
 \end{align*}
 This contradicts with the assumption of $G$.  
\end{proof}

The following lemma says that if  $G$ is an $m$-edge graph with spectral radius bounded away from $\sqrt{m}$, then every coordinate of the Perron--Frobenius eigenvector of $G$ is small. 

\begin{lemma} \label{lem:max} 
Let $G$ be a graph with $m$ edges and $\bm{x}=(x_v)_{v\in V(G)}$ be the unit Perron--Frobenius eigenvector of $G$.  
 If $\lambda^2 (G) \ge (1+\delta)m$ where $0<\delta \le 0.79$, then 
\[ \max\{x_v : v\in V(G)\} < {\delta^{-4}m^{-1/4} }. \] 
\end{lemma}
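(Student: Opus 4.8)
The plan is to exploit the eigen-equation together with the fact that $x_u$ is a weighted average of its neighbours' coordinates. Let $\bm{x}$ be the unit Perron--Frobenius eigenvector and let $u$ be a vertex achieving $x_u = \max_v x_v =: t$. From $\lambda x_u = \sum_{w \in N(u)} x_w$ and $\lambda^2 x_u = \sum_{w \in N(u)} \lambda x_w = \sum_{w \in N(u)} \sum_{z \in N(w)} x_z$ we get a handle on the two-step neighbourhood. The rough idea: if $t$ is large, then many coordinates near $u$ must also be largish (since $\lambda \approx \sqrt{m}$ and $x_u \le t$ forces $\sum_{w\in N(u)} x_w = \lambda t$ to be spread over at most $d_u$ neighbours, each at most $t$, so $d_u \ge \lambda$); iterating, the whole ``ball'' around $u$ carries a lot of $\ell_1$-mass, but the $\ell_2$-mass is only $1$, and counting edges gives a contradiction with $m$ being the edge count.

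Concretely, first I would record the elementary consequences: $d_u \ge \lambda$ (from $\lambda t = \sum_{N(u)} x_w \le d_u t$), and more usefully $\sum_{w \in N(u)} x_w = \lambda t$ so the neighbourhood of $u$ has large $\ell_1$-weight. Next, apply the same bound to the second neighbourhood via $\lambda^2 t = \sum_{w \in N(u)} \sum_{z \in N(w)} x_z \le \sum_{w\in N(u)} d_w \cdot t$, giving $\sum_{w \in N(u)} d_w \ge \lambda^2 \ge (1+\delta) m$. Since $\sum_{w \in V} d_w = 2m$, this says the neighbourhood of $u$ already accounts for more than half the edge-endpoints. Then I would try to upgrade this: the edges counted by $\sum_{w \in N(u)} d_w$ either land inside $N(u) \cup \{u\}$ or go to a common set of vertices, and in either case one can bound the number of edges incident to $N(u)$ from above using $m$ and from below using the $\ell_1$-weight, forcing $t$ to be small. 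The cleanest route is probably: let $S = N(u)$, use $\sum_{w \in S} x_w = \lambda t$ and Cauchy--Schwarz to get $|S| \ge (\lambda t)^2 / \sum_{w \in S} x_w^2 \ge \lambda^2 t^2$ (since $\sum x_w^2 \le 1$); simultaneously the number of edges incident to $S$ is at least $e(S)$ plus something, and the eigenvector identity $\lambda = 2\sum_{ij \in E} x_i x_j$ together with all products being controlled gives a second inequality. Combining $|S| \ge \lambda^2 t^2$ with $|S| \le d_u$ and a bound $d_u \le$ (number of edges incident to $u$) $\le m$ only yields $t \le m^{-1/2}/\lambda^{1/2}$-type bounds, which is not quite $m^{-1/4}$; so the $\delta^{-4}$ factor suggests a more careful iteration is needed, feeding the lower bound on $\sum_{w\in N(u)} d_w$ back through a Cauchy--Schwarz on $\sum d_w^2 \ge (\sum d_w)^2/|N(u)|$ and relating $\sum_{w} d_w^2 = w_3(G)$ to $m$ is false in general, so instead one localizes: the subgraph on $N[u]$ has $\ge (1+\delta)m/2 - O(m)$...

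The honest version of the argument I would pursue: suppose $t \ge \delta^{-4} m^{-1/4}$ for contradiction. The set $B$ of vertices with $x_v \ge \delta^{4} m^{-1/4}/2$ (say) must be nonempty and, by a standard argument propagating largeness through the eigen-equation two steps at a time, must contain $\gtrsim \delta^{c} \sqrt{m}$ vertices and induce $\gtrsim \delta^{c'} m$ edges; but then restricting $\bm{x}$ to $B$ and normalizing shows the induced subgraph $G[B]$ has $\lambda(G[B])^2$ close to $2 e(G[B]) \le 2m$, which combined with $\sum_{v \in B} x_v^2 \le 1$ and each $x_v \ge \delta^4 m^{-1/4}/2$ gives $|B| \le 4\delta^{-8}\sqrt m$, and then Rayleigh on $G[B]$ forces $e(G[B]) \ge $ something quadratic in $|B|$ that overshoots $m$. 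The key inequality chain is: $(1+\delta)m \le \lambda^2 = \big(2\sum_{ij} x_i x_j\big)^2 \le 2m \cdot 2\sum_{ij} x_i^2 x_j^2$-type Cauchy--Schwarz isn't tight enough either; the right tool is the identity $\lambda^2 = \sum_v (\sum_{w \in N(v)} x_w)^2 / \|\bm{x}\|^2$ evaluated cleverly at the max vertex. The main obstacle, and where I expect to spend the most effort, is getting the \emph{exact} exponent $-1/4$ and the polynomial dependence $\delta^{-4}$ rather than a worse bound: a naive one-step argument gives only $\max_v x_v = O(m^{-1/4} \cdot \mathrm{poly}(1/\delta))$ up to logarithmic or worse $\delta$-losses, so I anticipate needing a two-step ($\lambda^2 x_u = \sum_{w\in N(u)} \sum_{z \in N(w)} x_z$) estimate combined with the global constraint $\sum_v x_v^2 = 1$ and $\sum_v d_v = 2m$, balancing these to extract precisely $t^4 m \lesssim \delta^{-O(1)}$. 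I would carry out: (1) fix the max vertex and derive $\sum_{w \in N(u)} x_w = \lambda t$; (2) the two-step bound $\sum_{w \in N(u)} d_w x_u \ge$ controlling mass; (3) Cauchy--Schwarz against $\|\bm{x}\|_2 = 1$ to bound $|N(u)|$ and the mass in $N(u)$; (4) use $\lambda^2 \ge (1+\delta)m$ to beat the trivial $\lambda^2 < 2m$ and convert the slack $\delta$ into the final bound on $t$; the bookkeeping in step (4) to land on $\delta^{-4}m^{-1/4}$ is the crux.
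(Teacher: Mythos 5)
Your proposal correctly identifies the starting point --- evaluate the eigen-equation twice at the maximum vertex $u$ and play $\lambda^2 x_u$ off against the global constraints $\sum_v x_v^2=1$ and $\sum_v d_v=2m$ --- but it never closes the argument, and you say as much yourself (``the bookkeeping in step (4) \dots is the crux''). The missing idea is a dichotomy on \emph{edges}, not on vertices or neighbourhoods. From $\lambda^2 x_u=\sum_k d_{N(u)}(k)\,x_k\le\sum_k d(k)\,x_k=\sum_{jk\in E}(x_j+x_k)$, the trivial bound on the right-hand side is $2mx_u$, and the whole point is to improve the factor $2$ to roughly $1+\delta^2$ for almost every edge. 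Call an edge $jk$ doubly heavy if $\min\{x_j,x_k\}>\delta^{-2}m^{-1/4}$. Since $\bm{x}$ is a unit vector, fewer than $\delta^{4}m^{1/2}$ vertices have weight exceeding $\delta^{-2}m^{-1/4}$, so at most $\delta^{8}m/2$ edges are doubly heavy; these contribute at most $2x_u\cdot\delta^{8}m/2$. Every other edge has an endpoint of weight at most $\delta^{-2}m^{-1/4}\le\delta^{2}x_u$ (this is exactly where the contradiction hypothesis $x_u\ge\delta^{-4}m^{-1/4}$ enters), so it contributes at most $(1+\delta^{2})x_u$. Summing gives $(1+\delta)mx_u\le\lambda^2x_u\le(1+\delta^{2}+\delta^{8})mx_u$, which is impossible since $\delta>\delta^2+\delta^8$ for $0<\delta\le0.79$.

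Several of the intermediate routes you float are either unnecessary or would not work. The claim that the heavy set $B$ ``must contain $\gtrsim\delta^{c}\sqrt{m}$ vertices and induce $\gtrsim\delta^{c'}m$ edges'' is unjustified --- a priori $B$ could consist of $u$ alone; the normalization $\|\bm{x}\|_2=1$ only yields an \emph{upper} bound on $|B|$, and it is that upper bound (giving few doubly heavy edges) that the proof actually needs. Likewise the various Cauchy--Schwarz chains you consider are, as you yourself note, lossy by constant factors that the statement cannot afford. So the proposal has the right skeleton and the right global inputs, but a genuine gap at the decisive step.
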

\begin{proof}
Let $\lambda = \lambda (G)$ and 
$i\in V(G)$ be a vertex such that 
$x_i= \max\{x_v : v\in V(G)\}$. 
Suppose on the contrary that 
$x_i \ge \delta^{-4}m^{-1/4}$. Note that $\lambda \bm{x}= A_G \bm{x}$ gives 
\[ \lambda x_i = \sum_{j\in N(i)} x_j. \] 
Then we have 
\[  \lambda^2 x_i = \sum_{j\in N(i)} \lambda x_j= \sum_{j\in N(i)} 
\sum_{k\in N(j)} x_k = \sum_{k\in V(G)} d_{N(i)}(k) \cdot x_k , \]
where $d_{N(i)}(k)$ is the number of neighbors of $k$ in $N(i)$. Therefore, we get 
\begin{equation}\label{eq-eigen-sq}
\lambda^2 x_i \le \sum_{k\in V(G)} d_G(k)\cdot x_k = \sum_{jk \in E(G)}(x_j +x_k). 
\end{equation} 
For each edge $jk\in E(G)$, we call $jk$ \emph{good} if $\min\{ x_j ,x_k \} > \delta^{-2} m^{-1/4}$. Since 
$\sum_{v\in V} x_v^2 =1$, we get $\#\{ j\in V(G): x_j > \delta^{-2}m^{-1/4}\}  <  \delta^4 m^{1/2}$. Then 
the number of good edges of $G$ is at most $\delta^8m/2$. 
On the other hand, for each non-good edge 
$jk \in E(G)$, we have 
\[  x_j +x_k 
\le x_i + \delta^{-2} m^{-1/4} \le (1+\delta^2)x_i. \]  
Therefore, it follows from (\ref{eq-eigen-sq}) that 
\[  (1+\delta )m x_i \le \lambda^2 x_i \le 
2x_i \cdot \frac{1}{2} \delta^8m + (1+\delta^2) x_i \cdot m=(1+\delta^2 +\delta^8)mx_i . \]
Since $0<\delta <0.79$, we have 
$\delta > \delta^2 + \delta^8$, 
which leads to a contradiction. 
\end{proof}

\begin{lemma}
    \label{lem:remove-edges}
    If $G$ is a graph with $m$ edges, and $G'$ is the graph obtained from $G$ by removing at most $\gamma m$ edges, then $\lambda (G')> \lambda (G) - \sqrt{2\gamma m}$ and 
    $\lambda^2(G') > \lambda^2(G) - 4 \sqrt{\gamma} m.$
\end{lemma}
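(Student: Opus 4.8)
The statement to prove is Lemma~\ref{lem:remove-edges}: if $G'$ is obtained from $G$ by deleting at most $\gamma m$ edges, then $\lambda(G') > \lambda(G) - \sqrt{2\gamma m}$ and $\lambda^2(G') > \lambda^2(G) - 4\sqrt{\gamma}\,m$.

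\medskip

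\noindent\textbf{Proof proposal.}

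The plan is to control the spectral radius via the Rayleigh quotient evaluated at the Perron--Frobenius eigenvector $\bm{x}$ of $G$. First I would recall that $\lambda(G) = 2\sum_{uv \in E(G)} x_u x_v$ and that $\lambda(G') \geq \bm{x}^\top A_{G'} \bm{x} = 2\sum_{uv \in E(G')} x_u x_v$, since $\bm{x}$ is a unit vector (not necessarily the eigenvector of $G'$, but still a legitimate test vector). Therefore
\[
\lambda(G) - \lambda(G') \leq 2\sum_{uv \in E(G) \setminus E(G')} x_u x_v,
\]
where the sum is over the at most $\gamma m$ deleted edges. The key estimate is then to bound this sum. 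Using $2 x_u x_v \leq x_u^2 + x_v^2$ and the fact that each vertex appears in at most its degree many edges, one crude bound is $\sum_{uv} 2 x_u x_v \leq \sum_{uv}(x_u^2+x_v^2)$; but to get the clean $\sqrt{2\gamma m}$ I would instead apply Cauchy--Schwarz: writing $S$ for the set of deleted edges with $|S| \leq \gamma m$,
\[
\sum_{uv \in S} 2 x_u x_v \leq 2\left(\sum_{uv \in S} x_u^2 x_v^2\right)^{1/2} |S|^{1/2} \leq 2\Big(\sum_{uv \in E(G)} x_u^2 x_v^2\Big)^{1/2}(\gamma m)^{1/2}.
\]
Now I need $\sum_{uv \in E(G)} x_u^2 x_v^2 \leq \tfrac12$; this follows since $\sum_{uv\in E}x_u^2 x_v^2 \le \tfrac12\sum_{uv\in E}(x_u^2+x_v^2)^2/4\cdot\ldots$ — more directly, $\sum_{uv \in E} x_u^2 x_v^2 \le \tfrac12 \big(\sum_{uv\in E} x_u x_v\big)^2 \cdot$ is not quite it either, so the cleanest route is $\sum_{uv \in E} x_u^2 x_v^2 \le \big(\sum_{uv \in E} x_u x_v\big)^2 = (\lambda/2)^2 \le m/2$ (using $\lambda^2 < 2m$); plugging in gives $\sum_{uv \in S} 2 x_u x_v \le 2 \cdot (m/2)^{1/2} (\gamma m)^{1/2}\cdot\tfrac{1}{\sqrt2}$, and after arranging constants this yields $\lambda(G)-\lambda(G') \le \sqrt{2\gamma}\,m^{?}$. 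Here I will need to be careful with the exact power of $m$; the intended bound $\sqrt{2\gamma m}$ suggests using instead the bound $\sum_{uv\in S} x_ux_v \le \tfrac12\sum_{uv\in S}(x_u^2+x_v^2) \le \tfrac12 \cdot (\text{something})$, so the honest approach is: each deleted edge contributes $2x_ux_v$, there are $\le \gamma m$ of them, and $2x_ux_v \le 1$ trivially won't suffice — I expect the correct computation uses that the total Perron weight $2\sum_{uv\in E}x_ux_v = \lambda < \sqrt{2m}$ together with Cauchy--Schwarz over the $\gamma m$ deleted edges to extract a factor $\sqrt{\gamma m}/\sqrt{m} = \sqrt{\gamma}$, giving $\lambda(G)-\lambda(G') \le \sqrt{2\gamma m}$ as stated.

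For the second inequality I would simply square: from $\lambda(G') > \lambda(G) - \sqrt{2\gamma m}$ and $\lambda(G) < \sqrt{2m}$ we get
\[
\lambda^2(G') > \lambda^2(G) - 2\lambda(G)\sqrt{2\gamma m} + 2\gamma m > \lambda^2(G) - 2\sqrt{2m}\cdot\sqrt{2\gamma m} = \lambda^2(G) - 4\sqrt{\gamma}\,m,
\]
dropping the positive $2\gamma m$ term. This step is routine.

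\medskip

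\noindent\textbf{Main obstacle.} The only delicate point is pinning down the constant and the exponent of $m$ in the first inequality — specifically verifying that $2\sum_{uv \in S} x_u x_v \le \sqrt{2\gamma m}$ when $|S| \le \gamma m$. I expect this to come from Cauchy--Schwarz in the form $\big(\sum_{uv\in S} x_ux_v\big)^2 \le |S| \sum_{uv\in S} x_u^2 x_v^2 \le \gamma m \cdot \big(\sum_{uv\in E(G)}x_ux_v\big)^2 = \gamma m (\lambda/2)^2 < \gamma m \cdot m/2$, whence $\sum_{uv\in S} x_ux_v < \tfrac12\sqrt{\gamma}\,m$ and $2\sum_{uv\in S}x_ux_v < \sqrt{\gamma}\,m$; if the target is literally $\sqrt{2\gamma m}$ rather than $\sqrt{\gamma}\,m$ one instead bounds $2x_ux_v \le x_u^2+x_v^2$ and sums, using that a set of $\gamma m$ edges can concentrate weight on few high-weight vertices — but since $\sum_v x_v^2 = 1$ this is controlled. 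I will choose whichever of these bookkeeping routes produces exactly the stated constant, and present that one cleanly.
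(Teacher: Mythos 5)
Your overall strategy (test the Perron vector of $G$ against $A_{G'}$, then square) is sound and is essentially a hands-on version of the paper's one-line argument, but as written the first inequality is not actually established: every route you carry to completion gives a weaker bound. The paper simply notes $\lambda(G')\ge \bm{x}^{T}A_{G'}\bm{x}=\lambda(G)-\bm{x}^{T}A_{G\setminus G'}\bm{x}\ge\lambda(G)-\lambda(G\setminus G')$ and then uses $\lambda(H)<\sqrt{2e(H)}$ for $H=G\setminus G'$, which instantly gives $\lambda(G)-\lambda(G')<\sqrt{2\gamma m}$. Your quantity $2\sum_{uv\in S}x_ux_v$ is exactly $\bm{x}^{T}A_{G\setminus G'}\bm{x}$, so this observation would close your gap immediately.

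The concrete problems with the routes you do pursue are these. Your Cauchy--Schwarz step is the right one, and the inequality you correctly identify as what you need, $\sum_{uv\in E(G)}x_u^2x_v^2\le\tfrac12$, is true and easy: summing over unordered pairs, $\sum_{uv\in E}x_u^2x_v^2\le\sum_{\{u,v\}}x_u^2x_v^2\le\tfrac12\bigl(\sum_v x_v^2\bigr)^2=\tfrac12$, whence $2\sum_{uv\in S}x_ux_v\le 2\cdot(1/2)^{1/2}(\gamma m)^{1/2}=\sqrt{2\gamma m}$ exactly as required. But you abandon this and instead substitute $\sum_{uv\in E}x_u^2x_v^2\le(\lambda/2)^2<m/2$, which only yields $\lambda(G)-\lambda(G')<\sqrt{\gamma}\,m$ --- off by a factor of order $\sqrt{m/\gamma}$ from the stated bound, and too weak for the lemma's applications, where one needs the perturbation to be $o(\sqrt m)$. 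Your proposed fallback via $2x_ux_v\le x_u^2+x_v^2$ also fails: $\sum_{uv\in S}(x_u^2+x_v^2)=\sum_v d_S(v)x_v^2$ can be as large as $\gamma m$ when the deleted edges all meet one vertex of large Perron weight, so the normalisation $\sum_v x_v^2=1$ does not control it. The squaring step for the second inequality is fine and matches the paper.
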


\begin{proof}
    By Rayleigh's formula, we have $\lambda(G') \geq \lambda(G) - \lambda(G \backslash G')$. Thus
    $$\lambda^2(G') \geq \lambda^2(G) - 2 \lambda(G \backslash G') \lambda(G).$$
    Using the bound $\lambda(G) < \sqrt{2m}$ and $\lambda(G \backslash G') < \sqrt{2 \gamma m}$, we get the desired result.
\end{proof}

The following well--known fact (see, e.g., \cite{BFP2008}) will be used in our argument. 

\begin{lemma} \label{lem-square-equal}
  Let $G$ be a bipartite graph with the partition $V(G)=A\sqcup B$, and $\binom{\bm{x}}{ \bm{y}}$ be an eigenvector of $G$ corresponding to $\lambda (G)$ with $\bm{x}$ indexed by $A$ and $\bm{y}$ indexed by $B$. Then we have $\lVert \bm{x}\rVert = \lVert \bm{y}\rVert$, i.e., 
  \[ \sum_{i\in A} x_i^2 = \sum_{j\in B} y_j^2. \]  
\end{lemma}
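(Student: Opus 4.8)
The statement is the classical fact that for a bipartite graph $G$ with parts $A$ and $B$, the restriction of a $\lambda(G)$-eigenvector to $A$ and to $B$ have equal Euclidean norm. The plan is to exploit the block structure of the adjacency matrix. Write $A_G$ in block form as $\begin{pmatrix} 0 & N \\ N^{\top} & 0\end{pmatrix}$, where $N$ is the $|A| \times |B|$ biadjacency matrix recording edges between $A$ and $B$, and the two diagonal blocks vanish precisely because $G$ is bipartite. The eigen-equation $A_G \binom{\bm{x}}{\bm{y}} = \lambda \binom{\bm{x}}{\bm{y}}$ then splits into the two coupled relations $N\bm{y} = \lambda \bm{x}$ and $N^{\top}\bm{x} = \lambda \bm{y}$.

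From here the computation is short. I would compute $\lambda \lVert \bm{x}\rVert^2 = \lambda\, \bm{x}^{\top}\bm{x} = \bm{x}^{\top}(N\bm{y}) = (N^{\top}\bm{x})^{\top}\bm{y} = \lambda\, \bm{y}^{\top}\bm{y} = \lambda \lVert \bm{y}\rVert^2$, using the first eigen-relation to replace $\lambda\bm{x}$, transposing, and then the second eigen-relation to replace $N^{\top}\bm{x}$ by $\lambda\bm{y}$. Since $G$ is nonempty (we may assume it has at least one edge, else the claim is vacuous and $\lambda = 0$ anyway), $\lambda = \lambda(G) > 0$, so we may cancel $\lambda$ from both sides to conclude $\lVert\bm{x}\rVert = \lVert\bm{y}\rVert$, i.e. $\sum_{i\in A} x_i^2 = \sum_{j\in B} y_j^2$.

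One small point to address for full rigour: if $\lambda = 0$, the cancellation step is not valid, but in that case $G$ has no edges at all (a bipartite graph with an edge has positive spectral radius), so both sums are over isolated vertices; in the context of this paper isolated vertices are ignored, and in any event the statement can be interpreted as trivially holding. Alternatively one can simply note that the eigenvector corresponding to $\lambda(G)$ in the relevant applications is the Perron–Frobenius vector of a graph with edges, so $\lambda > 0$ is automatic. I do not anticipate any genuine obstacle here — the only thing to be careful about is matching the indexing convention (that $\bm{x}$ sits on $A$, $\bm{y}$ on $B$, and $N$ goes from $A$ to $B$) so that the two half-equations come out with the stated orientation.
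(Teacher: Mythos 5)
Your proof is correct, but it takes a genuinely different route from the paper. The paper also starts from the block form $A_G=\bigl(\begin{smallmatrix} O & M \\ M^{T} & O\end{smallmatrix}\bigr)$ and the two half-equations $M\bm{y}=\lambda\bm{x}$, $M^{T}\bm{x}=\lambda\bm{y}$, but then invokes a singular value decomposition $M=UDV$, deduces $|(V\bm{y})_i|=|(U^{T}\bm{x})_i|$ coordinatewise in the singular basis, and uses orthogonality of $U$ and $V$ to transfer this to the Euclidean norms. Your argument replaces all of that with the one-line identity $\lambda\,\bm{x}^{\top}\bm{x}=\bm{x}^{\top}(N\bm{y})=(N^{\top}\bm{x})^{\top}\bm{y}=\lambda\,\bm{y}^{\top}\bm{y}$ followed by cancelling $\lambda$. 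This is strictly more elementary (no SVD needed) and shorter; what the SVD route buys is the finer entrywise statement in the singular basis, which is not needed for the lemma. Both proofs share the same degenerate caveat when $\lambda=0$: for an edgeless bipartite graph the conclusion genuinely fails for a general eigenvector (e.g.\ the indicator of a single isolated vertex), so your remark that it ``can be interpreted as trivially holding'' is slightly too generous --- the honest statement is that the lemma is only applied to graphs with at least one edge, where $\lambda(G)\ge 1>0$, and you correctly identify this as the relevant justification for the cancellation step.
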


We provide a proof for the sake of completeness. 

\begin{proof}
  Note that $G$ is a bipartite graph. Then the adjacency matrix of $G$ is of the form:   
    \[  A_G=\begin{bmatrix} O & M \\ M^{T} & O \end{bmatrix}, \]
    where $M$ is a $(0,1)$-matrix of order $|A|\times |B|$ with 
    $m_{i,j}=1$ if the vertex $i\in A$ is adjacent to the vertex $j\in B$. As $\binom{\bm{x}}{\bm{y}}$ is the Perron--Frobenius eigenvector of $A_G$, we have 
    $M\bm{y} = \lambda(G) \bm{x}$ and 
    $M^{T}\bm{x}=\lambda(G) \bm{y}$. 
    Now, let $M=UDV$ be a singular value decomposition (see, e.g., \cite[p. 15]{Zhan2013}) of $M$, where $U,V$ are real orthogonal matrices, and $D$ is an $|A|\times |B|$ matrix with $(i,i)$-entries being the  singular values of $M$. Since $M\bm{y} = \lambda (G) \bm{x}$ and $U^TU=I$, we get  $DV\bm{y}=\lambda (G) U^{T} \bm{x}$, which implies that for each index $i$, 
    \[ D_{ii} \cdot (V\bm{y})_i =\lambda (G) \cdot (U^T\bm{x})_i.  \]
   Similarly, we obtain from $M^{T}\bm{x}= \lambda (G) \bm{y}$ that $D^TU^T \bm{x}= \lambda (G) V\bm{y}$. Then for each $i$, 
   \[ D_{ii} \cdot (U^T\bm{x})_i=\lambda (G)\cdot (V\bm{y})_i. \]
   Combining with the above two equalities, 
   we conclude that $|(V\bm{y})_i| = |(U^T\bm{x})_i|$, which yields  
    $\lVert V\bm{y}\rVert = \lVert U^T\bm{x}\rVert$. Since both $V$ and $U^T$ are orthogonal matrices, we get $\lVert \bm{y}\rVert = \lVert \bm{x}\rVert$. 
\end{proof}

We will also utilize the well-known Motzkin--Straus inequality \cite{MS1965}. 

\begin{lemma}[Motzkin--Straus \cite{MS1965}] \label{lem-MS} 
Let $r\ge 2$ be an integer and $G$ be a $K_{r+1}$-free graph on $n$ vertices. 
    If $\bm{x}=(x_i)_{i=1}^n$ is a non-negative real vector with 
$\sum_{i=1}^n x_i =1$, then 
\begin{equation*}
\sum_{ij \in E(G)} x_ix_j \le \frac{1}{2}\left( 1- \frac{1}{r} \right).
\end{equation*} 
The equality holds if and only if the subgraph induced by the vertices corresponding to non-zero entries of $\bm{x}$ is a complete $r$-partite graph such that the sum of the $x_i$'s in each part is the same. 
\end{lemma}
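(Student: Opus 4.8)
The plan is to study the maximum of the quadratic form $f(\bm{x}) := \sum_{ij\in E(G)} x_ix_j$ over the simplex $\Delta := \{\bm{x}\in\mathbb{R}^n : \bm{x}\ge 0,\ \sum_i x_i = 1\}$. Since $\Delta$ is compact and $f$ is continuous, a maximizer exists; fix one, call it $\bm{x}$, and let $S := \{i : x_i > 0\}$ be its support. The engine of the whole argument is the following observation about transferring weight between non-adjacent support vertices: if $i,j\in S$ and $ij\notin E(G)$, then $t\mapsto f\bigl(\bm{x}+t(\bm{1}_i-\bm{1}_j)\bigr)$ is an affine function of $t$, because the monomial $x_ix_j$ does not appear in $f$; since $t=0$ lies in the interior of the feasible segment (as $x_i,x_j>0$) and is a maximum, this affine function must be constant, so its slope $\sum_{u\in N(i)}x_u-\sum_{u\in N(j)}x_u$ vanishes, and moreover moving all of $x_j$ onto $x_i$ produces a new maximizer whose support is $S\setminus\{j\}$.

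For the inequality I would iterate this transfer step. Each step strictly shrinks the support, so after finitely many steps we reach a maximizer $\bm{x}^{\ast}$ whose support $Q$ contains no non-adjacent pair, i.e.\ induces a clique; as $G$ is $K_{r+1}$-free, $q := |Q|\le r$. Since $\sum_{v\in Q}x^{\ast}_v=1$,
\[
f(\bm{x}^{\ast}) \;=\; \tfrac12\Bigl(1-\sum_{v\in Q}(x^{\ast}_v)^2\Bigr) \;\le\; \tfrac12\Bigl(1-\tfrac1q\Bigr)\;\le\;\tfrac12\Bigl(1-\tfrac1r\Bigr)
\]
by Cauchy--Schwarz. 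Since $\bm{x}^{\ast}$ is a global maximizer, every $\bm{x}\in\Delta$ satisfies $f(\bm{x})\le\tfrac12(1-\tfrac1r)$.

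For the equality characterization, suppose $\bm{x}$ is any maximizer, with support $S$. I would first show that $G[S]$ is complete multipartite. If not, then non-adjacency fails to be transitive on $S$, so there are $i,j,k\in S$ with $ij,jk\notin E(G)$ but $ik\in E(G)$. By the slope computation above (applied to the non-edges $ij$ and $jk$), $\sum_{u\in N(i)}x_u=\sum_{u\in N(j)}x_u=\sum_{u\in N(k)}x_u$, so the derivative of $f$ along $\bm{v}:=\bm{1}_i+\bm{1}_k-2\bm{1}_j$ vanishes at $\bm{x}$; a direct evaluation of the quadratic form gives $f(\bm{x}+t\bm{v})=f(\bm{x})+t^2$ (the second-order coefficient is $\tfrac12\bm{v}^{\top}A_G\bm{v}=a_{ik}=1$), and for small $t>0$ the point $\bm{x}+t\bm{v}$ stays in $\Delta$, contradicting maximality. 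Hence $S=V_1\sqcup\cdots\sqcup V_t$ with $G[S]$ complete $t$-partite, and $K_{r+1}$-freeness forces $t\le r$. Writing $s_\ell:=\sum_{v\in V_\ell}x_v$ we get $f(\bm{x})=\tfrac12\bigl(1-\sum_\ell s_\ell^2\bigr)\le\tfrac12(1-\tfrac1t)\le\tfrac12(1-\tfrac1r)$, and equality throughout forces $t=r$ and $s_1=\cdots=s_r=\tfrac1r$. The converse direction is a one-line substitution.

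I expect the equality case, not the inequality, to be the delicate point: the transfer-plus-Cauchy--Schwarz argument only exhibits one conveniently structured maximizer, whereas the statement asserts that \emph{every} maximizer has balanced complete $r$-partite support. The crux is the second-order perturbation along the ``complement path'' $i$--$j$--$k$, which rules out any non-complete-multipartite support; the only things to be careful about there are that the three partial sums really do agree (this is where the first part of the argument is reused) and that the perturbation stays feasible, which holds because $x_j>0$.
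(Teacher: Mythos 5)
The paper does not prove this lemma at all; it is quoted verbatim from Motzkin--Straus \cite{MS1965}, so there is nothing internal to compare against. Your argument is correct and is, in essence, the classical proof: the weight-transfer step along a non-edge of the support (affine restriction, hence zero slope, hence a value-preserving shift that kills a coordinate) yields a maximizer supported on a clique of size at most $r$, and Cauchy--Schwarz finishes the inequality; the second-order perturbation along $\bm{1}_i+\bm{1}_k-2\bm{1}_j$ for a ``cherry'' in the complement correctly forces every maximizer's support to induce a complete multipartite graph, after which the balancing condition $s_1=\cdots=s_r=\tfrac1r$ falls out of equality in Cauchy--Schwarz. The only points worth stating explicitly if you write this up are the ones you already flagged: the slope identities for both non-edges $ij$ and $jk$ are needed to kill the first-order term, and feasibility of $\bm{x}+t\bm{v}$ uses $x_j>0$; both are handled correctly.
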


The following lemma will be used in the proof of \Cref{thm-common}. 

\begin{lemma}[Li--Liu--Zhang \cite{LLZ2024-book-qua}] \label{lem-Delta} 
 Let $G$ be a graph with $m$ edges and maximum degree $\Delta (G)$. 
If $\lambda (G)> \sqrt{m}$, then for sufficiently large $m$, the maximum degree $\Delta(G)$ of $G$ satisfies 
\[ \Delta (G)\le m/2 + m^{0.99}. \]
\end{lemma}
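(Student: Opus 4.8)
The plan is to argue by contradiction: suppose $\lambda(G)>\sqrt m$ but the maximum degree $d:=\Delta(G)$ satisfies $d>m/2+m^{0.99}$, and derive the contradiction $\lambda(G)^2\le m$. Fix a vertex $u$ with $d_u=d$ and put $H=G-u$, so that $e(H)=m-d<m/2-m^{0.99}$ and, by the trivial bound applied to $H$, $\lambda(H)\le\sqrt{2(m-d)}<\sqrt m<\lambda(G)=:\lambda$; in particular $\lambda>\lambda(H)$ is automatic, so I may work below $\lambda_1(A_H)$ in the resolvent. (If $G$ is disconnected, the component carrying $\Delta(G)$ has more than $m/2$ edges, so the remaining components have spectral radius $<\sqrt m$; thus one reduces to the connected case.)

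First I would record the exact relation between $\lambda$ and the local structure at $u$. Writing $A_G=A_H+\bm 1_u\bm s^\top+\bm s\bm 1_u^\top$, where $\bm s$ is the indicator of $N(u)$ and $A_H$ is regarded as acting on all of $V(G)$ with $u$ isolated, the rank‑two determinantal identity together with $(\lambda I-A_H)^{-1}\bm 1_u=\lambda^{-1}\bm 1_u$ and $\bm s^\top\bm 1_u=0$ shows that $\lambda$ is the largest root of $\bm s^\top(\lambda I-A_H)^{-1}\bm s=\lambda$, equivalently $\lambda^2=\bm s^\top(I-A_H/\lambda)^{-1}\bm s$. Since $\|\bm s\|^2=d$ and $(I-A_H/\lambda)^{-1}\preceq(1-\lambda(H)/\lambda)^{-1}I$, this already yields $\lambda^2\le d+\lambda\,\lambda(H)$; but this is too lossy near $d\approx m/2$, where $\lambda^2$ genuinely can sit slightly above $m$. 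The gain I would exploit is that $\bm s$ is spread over the $d\approx m/2$ vertices of $N(u)$, whereas the eigenspaces of $A_H$ with large $|\mu|$ are carried by few vertices. Writing $c_i=\langle\bm s,\bm u_i\rangle$ for the expansion of $\bm s$ in an eigenbasis of $A_H$, one has $\sum_i c_i^2=d$ and $\sum_i c_i^2\mu_i=\bm s^\top A_H\bm s=2e_G(N(u))\le 2(m-d)$; separating the nonnegative and negative eigenvalues of $A_H$ refines the bound to $\lambda^2\le d+\frac{P\,\lambda(H)}{\lambda-\lambda(H)}$ with $P=\sum_{\mu_i\ge 0}c_i^2\le d$.

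Next I would split on the size of $\lambda(H)$. If $\lambda(H)\le (m-d)\lambda/m$, then $\frac{P\,\lambda(H)}{\lambda-\lambda(H)}\le\frac{d\,\lambda(H)}{\lambda-\lambda(H)}\le m-d$, so $\lambda^2\le d+(m-d)=m$ — contradiction. Otherwise $\lambda(H)>(m-d)/\sqrt m$ is non‑negligible, so $H$ contains a vertex $u'$ with $d_H(u')\ge\lambda(H)$, a "second hub", and $e(G-u-u')=m-d-d_H(u')$ is strictly smaller than $m-d$. One then iterates the peeling argument on the reduced graph — bounding $\lambda$ through the resolvent identity at $u$ but now using the sharper estimate $\lambda(H)\le\lambda(G-u-u')+(\text{correction at }u')$ — so that after boundedly many steps either the residual graph is spectrally small enough to close the $d+(m-d)$ bound, or the accumulated hub degrees already exceed what $m$ edges permit. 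As a parallel shortcut, whenever $\lambda^2$ is bounded away from $m$ one may instead invoke \Cref{lem:max} to get $\|\bm x\|_\infty=O(m^{-1/4})$ and hence $|V(G)|=O(\sqrt m)$, which makes $G$ dense and forces $d=O(\sqrt m)$ outright; so the genuinely delicate range is $\lambda^2$ barely above $m$.

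The main obstacle is exactly this last range. The crude estimate $\lambda(H)\le\sqrt{2(m-d)}$ fed into $\lambda^2\le d+\lambda\,\lambda(H)$ only gives $\lambda^2\le m+\sqrt{m^2-d^2}$, which is vacuous when $d$ is near $m/2$; the argument must instead convert the surplus $\lambda(G)-\sqrt m$ into a quantitative statement about how little of the spectrum of $H$ the vector $\bm 1_{N(u)}$ can detect, and must control the second‑hub iteration finely enough that the error terms stay below $m^{0.99}$ — here there is a comfortable polynomial margin, since the truth is $\Delta(G)\le m/2+O(\sqrt m)$, as witnessed by a book $B_k$ with $O(\sqrt m)$ extra pendant edges at a spine vertex.
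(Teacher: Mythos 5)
\Cref{lem-Delta} is imported from \cite{LLZ2024-book-qua} and not proved in this paper, so there is no internal argument to compare against; I am therefore assessing your proposal on its own terms. Your setup is correct: the reduction to the connected case, the secular equation $\lambda^2=\bm{s}^\top(I-A_H/\lambda)^{-1}\bm{s}$ for $\bm{s}=\bm{1}_{N(u)}$ (valid because $\lambda(G)>\sqrt{m}>\sqrt{2(m-d)}\ge\lambda(H)$), and the Case~A computation showing $\lambda^2\le d+(m-d)=m$ whenever $\lambda(H)\le (m-d)\lambda/m$ all check out. The problem is that the proof has a genuine hole exactly where you locate ``the main obstacle,'' and the two devices you offer do not close it. In Case~B the only gap you can guarantee is $\lambda-\lambda(H)\ge \lambda-\sqrt{2(m-d)}\gtrsim m^{0.49}$, so the bound $\lambda^2\le d+\frac{P\,\lambda(H)}{\lambda-\lambda(H)}$ with the only available estimate $P\le d$ produces an error term of order $m\cdot\sqrt{m}/m^{0.49}=m^{1.01}$, which is vacuous; everything hinges on proving $P=\sum_{\mu_i\ge0}c_i^2\ll d$, i.e.\ that $\bm{1}_{N(u)}$ has small overlap with the top of the spectrum of $H$, and no mechanism for this is supplied. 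The ``second hub'' iteration does not rescue the argument: Case~B only guarantees $d_H(u')\ge\lambda(H)>(m-d)/\sqrt{m}\approx\sqrt{m}/2$, so each peeling step removes $O(\sqrt{m})$ of the roughly $m/2$ edges of $H$; reaching a residual graph with spectral radius below the Case~A threshold would take $\Omega(\sqrt{m})$ steps, not ``boundedly many,'' and after one step the perturbation is no longer rank two, so the scalar secular equation and its error control are gone. A configuration illustrating the deadlock: $H$ carries almost all of its $\approx m/2$ edges on a clique of $\Theta(\sqrt{m})$ vertices, so that $\lambda(H)$ is within $O(m^{0.49})$ of $\lambda$ while every vertex of $H$ has degree only $O(\sqrt{m})$ --- neither branch of your dichotomy makes progress, and ruling it out is precisely the missing $P\ll d$ estimate.

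Two further points. First, the ``parallel shortcut'' via \Cref{lem:max} requires $\lambda^2\ge(1+\delta)m$ for a fixed $\delta>0$ (not available under the bare hypothesis $\lambda>\sqrt{m}$), and even then it only bounds $\max_v x_v$; deducing $|V(G)|=O(\sqrt{m})$ additionally needs a matching lower bound on $\min_v x_v$, which in this paper comes from the edge-minimality argument of \Cref{lem-product-entry} in an extremal/$F$-free setting and is not available for an arbitrary $G$ here. Second, there is no usable ``surplus $\lambda(G)-\sqrt{m}$'' to convert into anything: the hypothesis permits $\lambda^2=m+o(1)$, so the contradiction must be the clean inequality $\lambda^2\le m$ derived from $d>m/2+m^{0.99}$ alone, and cannot draw strength from the excess of $\lambda$ over $\sqrt{m}$. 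In summary, what you have is a correct first reduction together with an accurate diagnosis of the remaining difficulty, but the crux --- controlling the spectral overlap of $N(u)$ with $G-u$ in the regime where $\lambda(G-u)$ is close to $\lambda(G)$ --- is left open, so the proposal does not constitute a proof.
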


\section{Extensions on Erd\H{o}s--Stone--Simonovits' theorem}\label{sec:ESS}

\subsection{Proof of \Cref{conj-LLF}}  

Before proving \Cref{conj-LLF}, 
we present a short proof of \Cref{eq-spec-ESS} which is quite different from Nikiforov's proof in \cite{Niki2009cpc}. 
It illustrates the basic ideas in our proof of \Cref{conj-LLF}.

\begin{proof}[{\bf Short proof of \Cref{thm-spec-ESS}}]
The lower bound can be witnessed by the Tur\'{a}n graph $T_{n,r}$ since $T_{n,r}$ is $F$-free and $\lambda (T_{n,r}) \ge (1-\frac{1}{r})n- \frac{r}{4n}$. 
Assume that $G$ is an $n$-vertex $F$-free graph. 
By Lemma \ref{lemEFR}, we can remove $o(n^2)$ edges from 
$G$ and get a $K_{r+1}$-free subgraph $G'$. Note that 
\[ \lambda (G\setminus G') <  
 \sqrt{2e(G\setminus G')} = o(n). \]   
 Since $G'$ is $K_{r+1}$-free, by Wilf's theorem, we have $\lambda (G') \le \left(1- \frac{1}{r} \right) n$. 
 Therefore, we obtain 
\[ \lambda (G) \le \lambda (G') + \lambda (G\setminus G') 
\le \left( 1- \frac{1}{r} \right)n + o(n), \]
which completes the proof.  
\end{proof}

Now we prove \Cref{conj-LLF}. 
In our proof of \Cref{thm-spec-ESS}, we remove $o(n^2)$ edges from $G$ 
to make it $K_{r+1}$-free, and then apply Wilf's bound. 
For \Cref{conj-LLF}, we need to show that the removal of these  edges changes the spectral radius by $o(m^{1/2})$, which is true in the dense case when $m=\Omega (n^2)$. Our key idea is to make a reduction to the case of dense graphs using Lemma \ref{lem:max}, which 
gives an upper bound on $\max_{v\in V(G)} \{x_v\}$, where $\bm{x}\in \mathbb{R}^n$ is the unit Perron--Frobenius eigenvector of $G$. 
We note that the assumptions of Lemma \ref{lem:max} do not directly imply that $G$ is dense. For example, $G$ can be the disjoint union of a clique $K_n$ with arbitrarily many isolated vertices. We will address this obstacle by a vertex deletion argument.

\begin{proof}[{\bf Proof of \Cref{conj-LLF}}]
By Lemma \ref{lemEFR}, for each $\varepsilon > 0$, there exists some $N(\varepsilon) > 0$ such that any $F$-free graph on $n \geq N(\varepsilon)$ vertices can be made $K_{r + 1}$-free by removing at most $\varepsilon^{20} n^2$ edges. Assume that $G$ is an $F$-free graph with $m$ edges. 
It suffices to prove that for any $\varepsilon  \in (0,1/3)$, we have
$$\lambda(G)\le 
\sqrt{\Bigl(1-\frac{1}{r} + \varepsilon \Bigr)2m} + N(\varepsilon).$$ 
Suppose on the contrary that 
$G$ is an $F$-free graph with $m$ edges and 
\begin{equation} \label{eq-assume}
    \lambda (G)> \sqrt{\left(1-\frac{1}{r} + \varepsilon\right)2m} + N(\varepsilon).
\end{equation}
Among such counterexamples, we choose a graph $G$ such that $|V(G)| + |E(G)|$ is minimal. Denote $|V(G)|=n$. It is clear that $G$ has no isolated vertices. As removing any edge does not create a counterexample due to the minimality of $G$, 
then for any $m'$-edge subgraph $G' \subseteq G$, we have 
$\lambda (G')\le \sqrt{(1- \frac{1}{r} + \varepsilon)2m'} +N(\varepsilon)$. 
Using Lemma \ref{lem-product-entry},  
 for any edge $uv \in E(G)$, we have 
$$ x_ux_v > \frac{1}{8\sqrt{m}}.$$ 
Furthermore, by Lemma \ref{lem:max}, 
we have 
\[ \max \{x_v: v\in V(G)\}  \leq (2\varepsilon)^{-4}m^{-1/4}. \] 
These two bounds above, together with the fact that $G$ has no isolated vertex, imply that
$$\min \{ x_v: v\in V(G)\} > 2\varepsilon^4 m^{-1/4}.$$
As $\bm{x} \in \mathbb{R}^n$ is a unit vector, 
it follows that 
\begin{equation}
    \label{eq-dense}
    n \leq 4^{-1}\varepsilon^{-8} m^{1/2}.
\end{equation}
On the other hand, we get from (\ref{eq-assume}) that $n \geq \lambda(G) \geq N(\varepsilon)$. 
By \Cref{lemEFR}, we can remove at most $\varepsilon^{20} n^2 $ edges from $G$ to get a $K_{r+1}$-free subgraph $G'$ with $m'$ edges. Note that $\varepsilon^{20} n^2 < \varepsilon^4 m$ by (\ref{eq-dense}). 
Now we use the Nikiforov bound (\ref{eq-Niki-2002-cpc}) and we get 
\[  \lambda (G') \le 
\sqrt{\Bigl(1-\frac{1}{r}\Bigr)2m'}. \]
Thus, we conclude that  
\begin{equation*}\label{eq:removal}
    \lambda(G) \le \lambda(G') + \lambda(G\setminus G') < \sqrt{\Bigl(1-\frac{1}{r}\Bigr)2m'} + \sqrt{2 \varepsilon^4 m}, 
\end{equation*}
which contradicts with (\ref{eq-assume}). This completes the proof. 
\end{proof}

\subsection{Proof of \Cref{thm:walk}}  

To prove~\Cref{thm:walk}, we first show a generalization of \Cref{lem:max} for walks. 
Recall that an $\ell$-walk is a walk with $\ell$ vertices. 
For two vertices $i, j$ in a graph $G$, let $w_\ell(i)$ denote the number of $\ell$-walks starting from $i$, and let $w_\ell(i, j)$ denote the number of $\ell$-walks from $i$ to $j$.

\begin{lemma} \label{lem:max-II}
Let $G$ be a graph and $\bm{x}=(x_j)_{j\in V(G)}$ be its unit Perron--Frobenius eigenvector. If $\lambda^{\ell} (G) \ge \left(\frac{1}{2} + \delta\right) \cdot w_{\ell}(G)$ for some $\delta \in (0, 0.1)$, then 
\[ \max\{x_j : j \in V(G)\} < \delta^{-4}\lambda(G)^{-1/2}. \] 
\end{lemma}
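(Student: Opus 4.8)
The plan is to mimic the proof of \Cref{lem:max} closely, replacing the quantity $\lambda^2 x_i$ with $\lambda^\ell x_i$ and the edge count $m$ with the walk count $w_\ell(G)$, while keeping careful track of how walk counts decompose. Let $\lambda = \lambda(G)$ and pick $i \in V(G)$ with $x_i = \max_{j} x_j$, and suppose toward a contradiction that $x_i \ge \delta^{-4} \lambda^{-1/2}$. The eigen-equation gives $\lambda^\ell x_i = \sum_{j \in V(G)} w_\ell(i,j) \, x_j$, and since every $\ell$-walk from $i$ to $j$ has endpoint $j$, we can try to bound the right-hand side in terms of $x_j$ being either small (most vertices) or comparable to $x_i$. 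The key inequality I would aim for is an analogue of \eqref{eq-eigen-sq}: bounding $\sum_j w_\ell(i,j) x_j$ by something like $\sum_j w_\ell(j) x_j$ or a related symmetric sum, using that $w_\ell(i,j) \le \min\{w_{\ell-1}(i) \cdot (\text{something}), \ldots\}$; more cleanly, $\sum_j w_\ell(i,j) x_j \le x_i \cdot w_\ell(i) + (\text{contribution of "heavy" endpoints})$, after splitting vertices $j$ according to whether $x_j$ exceeds a threshold $\delta^{-2}\lambda^{-1/2}$.

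The threshold argument proceeds as in \Cref{lem:max}: since $\sum_v x_v^2 = 1$, the set $H = \{j : x_j > \delta^{-2}\lambda^{-1/2}\}$ has size $|H| < \delta^4 \lambda$. For a walk from $i$ ending outside $H$, the endpoint contributes $x_j \le \delta^{-2}\lambda^{-1/2} \le \delta^2 x_i$, so the total such contribution is at most $\delta^2 x_i \cdot w_\ell(i)$. For walks ending in $H$, I need an upper bound on $\sum_{j \in H} w_\ell(i,j)$; the natural bound is $w_\ell(i) = \sum_j w_\ell(i,j)$, but that is too weak — instead I would bound the number of $\ell$-walks from $i$ that end in the small set $H$ by $|H|$ times the maximum number of walks into a single vertex, or route it through $w_{\ell-1}$ and the eigenvalue. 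A cleaner route: use $\lambda^{\ell} x_i = \lambda^{\ell-1}(\lambda x_i) = \sum_{k \in N(i)} \lambda^{\ell-1} x_k$ and iterate, or directly use the crude bound $w_\ell(i,j) \le w_{\ell-1}(i) \le w_\ell(G)$ combined with $|H|$ being small relative to $\lambda$. The point is that each of the two pieces should come out to at most roughly $(\tfrac12 + o(\delta)) w_\ell(G) \cdot x_i$, and adding them contradicts $\lambda^\ell x_i \ge (\tfrac12 + \delta) w_\ell(G) x_i$ once $\delta$ is small enough that the error terms (powers $\delta^2$, $\delta^4$, etc.) are dominated by $\delta$.

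The main obstacle I anticipate is the bound on $\sum_{j \in H} w_\ell(i, j)$, i.e., controlling the number of $\ell$-walks from $i$ whose endpoint lies in the heavy set. Unlike the $\ell = 2$ case, where $w_2(i,j) = d_{N(i)}(j) \le d_G(j)$ gave a clean handle via $\sum_{jk \in E}(x_j + x_k)$, for general $\ell$ the walk counts are not so transparently tied to the degree sequence, and a naive bound loses too much. I would resolve this either by (i) an inductive/telescoping use of the eigen-equation that peels off one step at a time and reduces to the $w_2$-style estimate on the last step, or (ii) observing that $\sum_{j \in H} w_\ell(i,j) = \sum_{j \in H} w_\ell(j,i) \le \sum_{j \in H} w_{\ell-1}(j) \cdot \Delta$-type quantities can be rerouted through $w_\ell(G) = \sum_j w_\ell(j)$ and the crude size bound $|H| < \delta^4 \lambda$, using $w_\ell(G) \ge \lambda^{\ell-1} \cdot (\text{min degree factor})$ to convert. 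Matching up the exponents so that the final contradiction is exactly $\delta > \delta^2 + (\text{higher order})$ — and checking that $\lambda^{-1/2}$, rather than $m^{-1/4}$, is the correct scaling (which is consistent since $w_\ell(G) \approx \lambda^{\ell-1} w_1$-ish and $w_2 = 2m$ recovers $\lambda \approx \sqrt{2m}$) — is the bookkeeping I expect to be delicate but ultimately routine.
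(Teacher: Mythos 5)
Your setup is right — the contradiction hypothesis, the threshold set $B=\{j: x_j>\delta^{-2}\lambda^{-1/2}\}$ with $|B|<\delta^4\lambda$, and the goal of showing the two contributions sum to less than $\lambda^{\ell}x_i$ — but the proposal stops exactly where the proof actually happens, and the obstacle you flag as the "main obstacle" is not resolved by either of the routes you sketch. The paper's proof rests on two concrete steps that are absent. First, a symmetrization: starting from $\lambda^{\ell}x_i=\sum_k w_{\ell+1}(i,k)x_k$, one observes that deleting the initial vertex $i$ from an $(\ell+1)$-walk injectively produces an $\ell$-walk, so $w_{\ell+1}(i,k)\le\sum_j w_{\ell}(j,k)$ and hence $\lambda^{\ell}x_i\le\frac{1}{2}\sum_{j,k}w_{\ell}(j,k)(x_j+x_k)$. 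This is what converts "walks from the specific vertex $i$" into "all $\ell$-walks", and it is the only point where the hypothesis $w_{\ell}(G)\le\lambda^{\ell}/(\frac{1}{2}+\delta)$ can enter. Your one-endpoint split of $\sum_k w_{\ell+1}(i,k)x_k$ cannot be closed: the light-endpoint term needs an upper bound on $w_{\ell+1}(i)$ of order $\lambda^{\ell}$, which does not exist in general (your own \Cref{lem:walks-lower} is a lower bound, and $\mathbf{1}_i^TA^{\ell}\mathbf{1}\le\lambda^{\ell}\sqrt{n}$ is far too weak), and the heavy-endpoint term via Cauchy--Schwarz gives $\mathbf{1}_i^TA^{\ell}\mathbf{1}_B\le\lambda^{\ell}\sqrt{|B|}$, i.e.\ a loss of $\sqrt{|B|}\approx\delta^2\sqrt{\lambda}\gg 1$ rather than a gain.

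Second, after symmetrizing, the correct decomposition is by whether \emph{both} endpoints $j,k$ of the $\ell$-walk lie in $B$, not just one. For the doubly-heavy walks the quadratic-form bound
\[\sum_{j,k\in B}w_{\ell}(j,k)=\mathbf{1}_B^TA^{\ell-1}\mathbf{1}_B\le\lambda^{\ell-1}\lVert\mathbf{1}_B\rVert^2=\lambda^{\ell-1}|B|\le\delta^4\lambda^{\ell}\]
gives contribution at most $\delta^4\lambda^{\ell}x_i$; the two-sidedness is essential, since it yields $|B|$ rather than $\sqrt{|B|}$. For the remaining walks at least one endpoint is light, so $x_j+x_k\le(1+\delta^2)x_i$ and the total is at most $\frac{1}{2}(1+\delta^2)w_{\ell}(G)x_i\le\frac{1+\delta^2}{1+2\delta}\lambda^{\ell}x_i$, and $\frac{1+\delta^2}{1+2\delta}+\delta^4<1$ closes the contradiction. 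Neither of your suggested fixes — the telescoping/inductive peeling, or routing through $w_{\ell}(G)\ge\lambda^{\ell-1}\cdot(\text{min-degree factor})$ — supplies these two ingredients, so as written the argument has a genuine gap at its central step.
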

\begin{proof}
We denote  $\lambda = \lambda(G)$ and let $i \in V(G)$ be a vertex with $x_i = \max\{x_j : j\in V(G)\}$. 
Assume for the sake of contradiction that 
$x_i \ge \delta^{-4}\lambda^{-1/2}$. We use the identity
$$\lambda^{\ell} x_i = \sum_{k \in V(G)} A_G^{\ell}(i,k) \cdot x_k = 
 \sum_{k \in V(G)} w_{\ell + 1}(i, k) \cdot x_k.$$ 
Note that each $(\ell +1)$-walk starting from the vertex $i$ 
is determined by the $\ell$-walk obtained by removing $i$. So we have
$$\lambda^{\ell} x_i \leq \sum_{j, k \in V(G)} w_\ell(j, k) \cdot x_k = \frac{1}{2} 
\sum_{j, k \in V(G)} w_\ell(j, k)\cdot  (x_j + x_k).$$
Let $B$ be a vertex subset of $G$ defined as below: 
\[ B := \left\{j\in V(G):  x_j > 
\delta^{-2} \lambda^{-1/2} \right\}. \] 
As $\bm{x}$ is unit, i.e., $\sum_{j\in V(G)} x_j^2 = 1$, we have 
\[ \abs{B} < \delta^4 \lambda.\] 
We call an $\ell$-walk $w$ from $j$ to $k$ \emph{good} if both $j$ and $k$ lie in $B$, and \emph{bad} otherwise.
Applying the Rayleigh's formula, the number of good $\ell$-walks in $G$ is given by
$$\sum_{j, k \in B} w_\ell(j, k) = \mathbf{1}_{B}^T A_G^{\ell - 1}  \mathbf{1}_{B} \leq \lambda^{\ell - 1} \mathbf{1}_{B}^T \mathbf{1}_{B} = \lambda^{\ell - 1} |B| \leq \delta^4 \lambda^{\ell}.$$
Thus we have
\begin{equation} \label{upper-bad}
    \frac{1}{2} \sum_{j, k \in B} w_\ell(j, k) \cdot (x_j + x_k) \leq \delta^4 \lambda^{\ell} x_i.
\end{equation}
On the other hand, for any bad $\ell$-walk from $j$ to $k$, we have 
\[ x_j+x_k\le x_i+\delta^{-2}\lambda^{-1/2}\le(1+\delta^2)x_i. \]
Thus, the contribution from bad walks is bounded by 
\begin{equation} \label{upper-not-bad}
    \frac{1}{2}
\sum_{\substack{j, k \in V(G)\\ \{j, k\} \not\subseteq B}} w_\ell(j, k) \cdot (x_j + x_k) \leq \frac{1}{2} w_{\ell}(G) \cdot (1 + \delta^2) x_i \leq \frac{1 + \delta^2}{1 + \delta} \lambda^{\ell} x_i,
\end{equation}
where the last inequality follows by the assumption  $\lambda^{\ell}\ge \left(\frac{1}{2} + \delta\right) \cdot w_{\ell}(G)$. 

Summing the two inequalities (\ref{upper-bad}) and (\ref{upper-not-bad}), we conclude that
$$\frac{1}{2}\sum_{j, k \in V(G)} w_\ell(j, k) \cdot (x_j + x_k) \leq \left(\frac{1 + \delta^2}{1 + \delta} + \delta^4\right) \lambda^{\ell} x_i < \lambda^{\ell} x_i,$$
which leads to a contradiction.
\end{proof}

An additional lemma we need is the following. 

\begin{lemma}
    \label{lem:walks-lower}
    Let $G$ be a graph and $\bm{x}=(x_j)_{j\in V(G)}$ be its unit Perron--Frobenius eigenvector. Then for any vertex $i$ in $G$, we have
    $$w_\ell(i) \geq \frac{\lambda^{\ell - 1}(G) x_i}{\max_{j \in V(G)} \{x_j\}}.$$
\end{lemma}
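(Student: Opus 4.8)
The plan is to interpret $w_\ell(i)$ as a matrix-vector product and then compare the all-ones vector with a suitably scaled copy of the Perron--Frobenius eigenvector, exploiting that all powers of $A_G$ have non-negative entries. Concretely, an $\ell$-walk from $i$ to $k$ is exactly a sequence of $\ell$ vertices, hence $\ell-1$ consecutive edges, so the number of such walks is $w_\ell(i,k) = (A_G^{\ell-1})_{ik}$. Summing over all endpoints $k$ gives
$$w_\ell(i) = \sum_{k\in V(G)} (A_G^{\ell-1})_{ik} = \bigl(A_G^{\ell-1}\bm{1}\bigr)_i,$$
where $\bm{1}$ denotes the all-ones vector.

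\textbf{Key step.} Let $M := \max_{j\in V(G)}\{x_j\}$. Since $\bm{x}$ is a non-negative vector, we have $x_j/M \le 1$ for every $j$, i.e.\ $\bm{1} \ge \frac{1}{M}\bm{x}$ coordinatewise. Because $A_G$ is a non-negative matrix, so is $A_G^{\ell-1}$, and therefore multiplication by $A_G^{\ell-1}$ preserves the coordinatewise order. Applying this to the inequality above and using the eigen-equation $A_G^{\ell-1}\bm{x} = \lambda(G)^{\ell-1}\bm{x}$, we obtain
$$A_G^{\ell-1}\bm{1} \;\ge\; \frac{1}{M}\,A_G^{\ell-1}\bm{x} \;=\; \frac{\lambda(G)^{\ell-1}}{M}\,\bm{x}$$
coordinatewise. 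Reading off the $i$-th coordinate yields
$$w_\ell(i) \;=\; \bigl(A_G^{\ell-1}\bm{1}\bigr)_i \;\ge\; \frac{\lambda(G)^{\ell-1} x_i}{M} \;=\; \frac{\lambda(G)^{\ell-1} x_i}{\max_{j\in V(G)}\{x_j\}},$$
which is the claimed bound.

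\textbf{Main obstacle.} There is essentially no hard step here; the only points that require care are the bookkeeping convention that an $\ell$-walk has $\ell$ vertices (hence the exponent $\ell-1$), and the observation that non-negativity of $A_G^{\ell-1}$ is what makes the coordinatewise comparison go through. One should also note the degenerate case $\ell = 1$, where the statement reads $w_1(i) = 1 \ge x_i/\max_j x_j$, which holds trivially.
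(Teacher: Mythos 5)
Your proof is correct and is essentially the paper's own argument in different clothing: the paper evaluates $\mathbf{1}_i^T A_G^{\ell-1}\bm{x}$ two ways (getting $\lambda^{\ell-1}x_i$ from the eigen-equation and bounding it above by $w_\ell(i)\cdot\max_j x_j$), which is exactly your coordinatewise comparison $\bm{1}\ge \bm{x}/M$ pushed through the non-negative matrix $A_G^{\ell-1}$ and read off at coordinate $i$. Your walk-counting convention $w_\ell(i,k)=(A_G^{\ell-1})_{ik}$ also matches the paper's, so there is nothing to fix.
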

\begin{proof}
    Let $\lambda=\lambda(G)$ and $\mathbf{1}_i$ be the vector with $1$ at the $i$-th entry and $0$ everywhere else. Then
    $$\mathbf{1}_i^T A_G^{\ell - 1} \bm{x} = \lambda^{\ell - 1}  \mathbf{1}_i^T \bm{x}  = \lambda^{\ell - 1} x_i.$$
    On the other hand, it follows that 
    $$ \mathbf{1}_i^T A_G^{\ell - 1} \bm{x}   
   = \sum_{j \in V(G)}w_\ell(i, j) \cdot x_j \leq 
     \sum_{j \in V(G)}w_\ell(i, j) \cdot \max_{j \in V(G)} \{ x_j \} = w_\ell(i) \cdot \max_{j \in V(G)} \{ x_j \},$$
    as desired.
\end{proof}

We can now prove \Cref{thm:walk}.

\begin{proof}[{\bf Proof of \Cref{thm:walk}}]
By Lemma \ref{lemEFR}, for each $\varepsilon > 0$, there exists some $N(\varepsilon) > 0$ such that any $F$-free graph on $n \geq N(\varepsilon)$ vertices can be made $K_{r+ 1}$-free by removing at most $\varepsilon^{20}\ell^{-8} n^2/256$ edges. Assume that $G$ is an $F$-free graph with $m$ edges. 
It suffices to prove that, for any $0< \varepsilon <0.01$, we have
$$\lambda^{\ell}  (G)\le 
\left(1-\frac{1}{r} + \varepsilon\right) \cdot w_{\ell}(G) + N(\varepsilon)^{\ell}.$$
Suppose  on the contrary that there exists a graph $G$ with
\begin{equation} \label{eq-contra}
\lambda^{\ell} (G) > 
\left(1-\frac{1}{r} + \varepsilon\right) \cdot w_{\ell}(G) + N(\varepsilon)^{\ell}.
\end{equation}
Among such $G$, we choose a graph $G$ such that $|V(G)| + |E(G)|$ is minimal. Denote $V = V(G)$. It is clear that $G$ does not have an isolated vertex. Let $e = \{i, j\}$ be any edge in $G$, and let $G' = G \backslash \{e\}$ be the subgraph of $G$ obtained by removing $e$. 
By the minimality of $G$, we have
$$\lambda^{\ell}(G') \leq \left(1-\frac{1}{r} + \varepsilon\right) \cdot w_{\ell}(G') + N(\varepsilon)^{\ell}.$$
Therefore, we observe that 
$$\lambda^{\ell}(G) - \lambda^{\ell}(G') > \left(1- \frac{1}{r} + \varepsilon\right) \cdot (w_{\ell}(G) - w_{\ell}(G')).$$ 
On the one hand, we recall that 
$ \lambda(G')\ge \lambda(G) - 2 x_i x_j$. Recall the elementary inequality that $a^{\ell}-b^{\ell} 
\le \ell a^{\ell -1}(a-b)$ holds for any numbers $a\ge b>0$. Then 
\[  \lambda^{\ell}(G)- \lambda^{\ell}(G') \le \ell \cdot \lambda^{\ell -1}(G) \cdot 2x_ix_j.  \]
On the other hand, since any $\ell$-walk in $G$ starting with the edge $ji$ in $G$ no longer appears in $G'$, and there are $w_{\ell - 1}(i)$ such walks, we also have
$$w_{\ell}(G) - w_{\ell}(G') \geq w_{\ell - 1}(i).$$  
Therefore, we have
$$\ell  \cdot \lambda^{\ell - 1}(G)\cdot 2x_i x_j \geq 
\left(1- \frac{1}{r} + \varepsilon\right) \cdot w_{\ell - 1}(i).$$
Invoking \Cref{lem:walks-lower}, we have
$$w_{\ell - 1}(i) \geq \frac{\lambda^{\ell - 2}(G) x_i}{\max_{k \in V} \{x_k\}}.$$
So we conclude that
$$x_j \cdot \max_{k \in V} \{x_k\} \geq \frac{1}{4\ell \lambda(G)}.$$
As each vertex $j$ in $G$ is contained in at least one edge, this inequality holds for any vertex $j$.

By \Cref{lem:max-II}, we have
$\max \{x_k:k \in V\} < \varepsilon^{-4} \lambda(G)^{-1/2}$. 
Then for any $j \in V$, 
$$x_j \geq \frac{\varepsilon^4}{4\ell} \lambda(G)^{-1/2},$$
which implies
$$\abs{V} \leq 16\ell^2 \varepsilon^{-8} \lambda(G).$$
As $|V|\ge\lambda(G)\ge N(\varepsilon)$, by the removal Lemma, we can remove at most $\varepsilon^{20}\ell^{-8} \abs{V}^2/256 \leq \varepsilon^4 \ell^{-4}\lambda^2(G)$ edges from $G$ to obtain a $K_{r+1}$-free subgraph $G'$. 
Applying \Cref{thm-Niki-2006} on $G'$ gives 
$$\lambda(G') \leq \left(\Bigl(1-\frac{1}{r}\Bigr) w_{\ell}(G')\right)^{1/\ell} \leq \left(\Bigl(1-\frac{1}{r}\Bigr) w_{\ell}(G)\right)^{1/\ell}.$$
So we conclude that
\[ \lambda(G) \le \lambda(G') + \lambda(G\setminus G') <\left(\Bigl(1-\frac{1}{r}\Bigr) w_{\ell}(G)\right)^{1/\ell} + \sqrt{2\varepsilon^4 \ell^{-4} \lambda^2(G)}. \]  
Simplifying, we obtain
\[  \lambda^{\ell} (G) < \left(1- \sqrt{2} \varepsilon^2\ell^{-2} \right)^{-\ell} 
 \left(1-\frac{1}{r}\right) w_{\ell}(G), \]
which together with (\ref{eq-contra}) gives 
\[  \left(1- \frac{1}{r} \right)  > \left(1- \sqrt{2} \varepsilon^2\ell^{-2} \right)^{\ell} \left(1-\frac{1}{r} + \varepsilon\right) \ge 
\left(1- \sqrt{2}\varepsilon^2\ell^{-1}\right) \left(1-\frac{1}{r} + \varepsilon\right), \]
where the second inequality follows by Bernoulli's inequality. 
So we get $ \sqrt{2}\varepsilon \ell^{-1}(1-\frac{1}{r}+\varepsilon)>1$, which leads to a contradiction since $\varepsilon< 0.01$.  
The proof is complete. 
\end{proof}

\section{The edge-spectral stability theorem}
\label{sec:edge-stability}

Our proof of~\Cref{thm:nikiforov-stability} for general graphs $F$ reduces to the case when $F$ is a clique. We first prove the clique case when $F$ is a triangle in~\Cref{sec:F-triangle},  and then the case when $F=K_{r+1}$ with $r\ge 3$  in~\Cref{sec:F-clique}. Finally, we settle the general case in~\Cref{sec:F-general}.

\subsection{The case of triangles}\label{sec:F-triangle}

As a starting point for the whole argument, let us prove \Cref{thm:nikiforov-stability} when $F$ is the triangle $K_3$. 
For the convenience of readers, we extract this case as the following theorem. 

\begin{theorem} \label{thm-stability-triangle}
    For every $\varepsilon\in (0,0.01)$, 
    there exists $\delta = \delta (\varepsilon) >0$ such that if $G$ is a triangle-free graph with $m$ edges and 
    \[ \lambda (G)\ge (1- \delta )\sqrt{m}, \] 
    then there exist disjoint vertex subsets $A,B \subseteq V(G)$ such that $d(G,K_{A,B})\le \varepsilon m$.
\end{theorem}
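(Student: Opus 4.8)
The plan is to prove \Cref{thm-stability-triangle} in two stages, following the principle that a triangle‑free graph of near‑extremal spectral radius is first close to bipartite and then close to complete bipartite. In Stage~1 I will extract a bipartite subgraph $G_1\subseteq G$ with $e(G)-e(G_1)\le \eta m$ for some $\eta=\eta(\delta)\to 0$; by \Cref{lem:remove-edges} this keeps $\lambda(G_1)\ge(1-\delta-\sqrt{2\eta})\sqrt m$, and since the triangle‑free bound \eqref{eq-Niki-2002-cpc} gives $\lambda^2(G_1)\le e(G_1)=:m'$, we have $\lambda^2(G_1)\ge(1-\eta')m'$ with $\eta'\to 0$. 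In Stage~2 I will show that any bipartite $H$ with $m'$ edges and $\lambda^2(H)\ge(1-\eta')m'$ satisfies $d(H,K_{A,B})\le\varepsilon'm'$ for a suitable choice of $A,B$, where $\varepsilon'\to 0$ as $\eta'\to 0$. The triangle inequality for the edit distance then gives $d(G,K_{A,B})\le\eta m+\varepsilon'm'\le(\eta+\varepsilon')m$, which is at most $\varepsilon m$ once $\delta$ is small enough in terms of $\varepsilon$.

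For Stage~1 I will use the trace identities $\sum_i\lambda_i^2=\mathrm{Tr}(A_G^2)=2m$ and $\sum_i\lambda_i^3=\mathrm{Tr}(A_G^3)=6\cdot(\text{number of triangles})=0$, where $\lambda_1\ge\cdots\ge\lambda_n$ is the spectrum. From $\sum_i\lambda_i^3=0$ and $\lambda_1\ge(1-\delta)\sqrt m$ we get $\sum_{i\ge2,\,\lambda_i<0}|\lambda_i|^3=\lambda_1^3+\sum_{i\ge2,\,\lambda_i>0}\lambda_i^3\ge(1-\delta)^3 m^{3/2}$, while $\sum_{i\ge2}\lambda_i^2=2m-\lambda_1^2\le(1+2\delta)m$; pulling out the largest negative eigenvalue, $|\lambda_n|\ge\frac{(1-\delta)^3 m^{3/2}}{(1+2\delta)m}=(1-c_1(\delta))\sqrt m$ with $c_1\to0$. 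Since also $\lambda_1\le\sqrt m$ by \eqref{eq-Niki-2002-cpc}, this yields $\lambda_1^2+\lambda_n^2\ge(2-c_2(\delta))m$, hence $\sum_{i=2}^{n-1}\lambda_i^2\le c_2(\delta)m$, so $A_G$ lies within Frobenius distance $\sqrt{c_2(\delta)m}$ of $R:=\lambda_1\bm x\bm x^{\!\top}+\lambda_n\bm z\bm z^{\!\top}$ for unit eigenvectors $\bm x,\bm z$ of $\lambda_1,\lambda_n$, and moreover $0\le\lambda_1+\lambda_n=\lambda_1-|\lambda_n|\le c_1(\delta)\sqrt m$. Writing $\bm p=(\bm x+\bm z)/\sqrt2$, $\bm q=(\bm x-\bm z)/\sqrt2$ and expanding $R$ in this orthonormal pair gives $A_G=\mu(\bm p\bm q^{\!\top}+\bm q\bm p^{\!\top})+E$ with $\mu=(\lambda_1-\lambda_n)/2\ge(1-o(1))\sqrt m$ and $\|E\|_F\le c_3(\delta)\sqrt m$, $c_3\to0$; and $\mu(\bm p\bm q^{\!\top}+\bm q\bm p^{\!\top})$ is a weighted complete bipartite adjacency matrix once $\bm p$ and $\bm q$ have essentially disjoint supports. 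Taking $V_p=\{v:|p_v|\ge|q_v|\}=\{v:z_v\ge0\}$ and letting $G_1$ consist of the edges of $G$ crossing the partition $(V_p,V(G)\setminus V_p)$, the bound on $\|E\|_F$ should force $e(G)-e(G_1)=o_\delta(m)$; this is the bipartite‑reduction lemma.

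For Stage~2, let $\binom{\bm x}{\bm y}$ be the unit Perron eigenvector of $H$ with parts $X,Y$; by \Cref{lem-square-equal}, $\|\bm x\|^2=\|\bm y\|^2=\tfrac12$. Writing $M$ for the $X\times Y$ biadjacency matrix, the triangle‑free bound $\lambda^2(H)\le m'$ is exactly $\|M\|_{\mathrm{op}}^2\le\|M\|_F^2$, with equality iff $M$ has rank one; the hypothesis $\lambda^2(H)\ge(1-\eta')m'$ forces the lower singular values to satisfy $\sum_{i\ge2}\sigma_i^2\le\eta' m'$, so $M$ is within Frobenius distance $\sqrt{\eta' m'}$ of its rank‑one truncation $2\lambda(H)\,\bm x\bm y^{\!\top}$. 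Expanding gives $\sum_{uv\in E(H)}\big(1-2\lambda(H)x_uy_v\big)^2+4\lambda^2(H)\sum_{uv\notin E(H),\,u\in X,\,v\in Y}x_u^2y_v^2\le\eta' m'$. The second term shows $A\times B$ has $o(m')$ non‑edges once $A\subseteq X$, $B\subseteq Y$ are chosen to be the vertices of non‑negligible Perron weight; the first shows all but $O(\eta'm')$ edges $uv$ satisfy $x_uy_v\approx\frac{1}{2\lambda(H)}$, which pins those edges inside $A\times B$ and yields $|A|\,|B|\approx m'$. I will take $A,B$ by truncating $\bm x,\bm y$ at a small constant multiple of the maximum coordinate on each side; this \emph{relative} threshold is essential because the extremal graphs include stars $K_{1,m}$, where all Perron coordinates on one side are tiny. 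Assembling these facts gives $d(H,K_{A,B})\le\varepsilon'm'$.

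The step I expect to be the main obstacle is Stage~2, and within it the handling of highly unbalanced targets: the extremal configurations are the complete bipartite graphs $K_{s,t}$ with $s\ll t$ — in the limit, the stars $K_{1,m}$ — for which one side of the target consists entirely of vertices whose Perron weight is far below that on the other side. Keeping $|A|$ and $|B|$ under enough control that the weighted non‑edge estimate converts into a genuine edge count, while ensuring the relative truncation discards the ``noise'' vertices and retains all the structural ones, is where the real work lies. A secondary difficulty, in Stage~1, is that turning the spectral near‑symmetry into an edge count for $G_1$ requires showing that $\bm p$ and $\bm q$ have essentially disjoint supports — equivalently that $|\bm z|\approx\bm x$ for the extremal eigenvectors — which does not follow formally from the trace bounds and needs a separate argument.
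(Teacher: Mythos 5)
Your two–stage architecture (first make $G$ bipartite, then show a bipartite graph with $\lambda^2$ close to $m$ is close to complete bipartite) is exactly the paper's, and your Stage~2 identity — expanding $\lVert M-2\lambda(H)\bm{x}\bm{y}^{\top}\rVert_F^2\le\eta'm'$ into a sum of $x_u^2y_v^2$ over non-edges plus a sum of $(x_uy_v-\tfrac{1}{2\lambda})^2$ over edges — is precisely the paper's inequality \eqref{eq:Motzkin-Straus-Exploit-I}, derived through the SVD rather than directly from $\sum_i x_i^2=\sum_j y_j^2=\tfrac12$. However, both stages have genuine gaps as written. In Stage~1, the trace computation does give $|\lambda_n|\ge(1-o(1))\sqrt m$ and $\lVert A-\lambda_1\bm{x}\bm{x}^{\top}-\lambda_n\bm{z}\bm{z}^{\top}\rVert_F^2=o(m)$, but the final step — that cutting along $\{v:z_v\ge 0\}$ leaves only $o(m)$ internal edges — does not follow, as you acknowledge. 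Concretely, the global constraints you can extract are $\sum_{\mathrm{internal}}z_uz_v\le o(\sqrt m)$ and, for all but $o(m)$ edges, $\mu(x_ux_v-z_uz_v)=1\pm o(1)$; an internal edge with $z_uz_v\approx 0$ and $x_ux_v\approx 1/\mu$ is compatible with both, and one checks that $\Omega(m)$ such edges are simultaneously consistent with $\sum_E x_ux_v=\lambda_1/2$, $\sum_E z_uz_v=\lambda_n/2$ and $\sum_E|z_u||z_v|\le\lambda_1/2$. So the missing ingredient ($|z_v|\approx x_v$ entrywise) is not a technicality but the entire content of the bipartization step. The paper's \Cref{lem-bipartite} sidesteps all of this in four lines: take the maximum-weight vertex $u$; triangle-freeness makes $N(u)$ independent, and $\lambda^2x_u\le(d(u)+e(N(u),W))x_u$ forces $e(W)\le\varepsilon m$ with $W=V\setminus(N(u)\cup\{u\})$.

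In Stage~2, the step you pass over — ``which pins those edges inside $A\times B$'' — is exactly where the paper's real work lies, and it is false for the truncation you propose. Knowing $x_uy_v\approx\tfrac{1}{2\lambda}$ for an edge $uv$ does not place $u$ above any threshold: $x_u$ can be arbitrarily far below your cutoff provided $y_v$ is correspondingly large. The paper's \Cref{cl:3} is devoted to these edges (the set $E_2$: low weight on one side, high on the other), and bounding them cannot be done from the pointwise inequality alone — it requires a second global Cauchy--Schwarz argument playing $\sum_{j<s}y_j^2<\varepsilon^2$ against the hypothesis $\lambda(G)\ge(1-\delta)\sqrt m$. Relatedly, the paper's thresholds are $\ell^2$-quantiles of $\bm{x}$ and $\bm{y}$ (the indices $r,s$ with $\sum_{i\le r}x_i^2\ge\varepsilon^2$), not constant multiples of the maxima; the quantile choice is what makes \Cref{cl:1} ($x_ry_s=\Theta(1/\sqrt m)$), and hence the count of missing edges in $U\times V$, work uniformly for unbalanced targets such as stars. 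Both gaps are fillable, but as it stands the proposal identifies the right decomposition without supplying the two arguments that actually prove it.
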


Recall that the stability of the Mantel theorem says that 
if $G$ is an $n$-vertex triangle-free graph with 
$e(G)\ge n^2/4 -q$, then 
$G$ can be made bipartite by removing at most $q$ edges. Next, we show a spectral version of this statement.  

\begin{lemma} \label{lem-bipartite}
For every $\varepsilon >0$, 
if $G$ is an $m$-edge triangle-free graph with
\[ \lambda (G) \ge 
(1- \varepsilon /2 ) \sqrt{m}, \] 
then $G$ can be made bipartite by removing at most 
$\varepsilon m$ edges. 
\end{lemma}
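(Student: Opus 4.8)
The plan is to exploit the eigenvalue/edge-count gap via a standard max-cut argument combined with the spectral bound. Let $\bm{x}$ be the unit Perron--Frobenius eigenvector of $G$, and let $\lambda = \lambda(G)$. The key identity for triangle-free graphs is that for each vertex $v$, the neighbourhood $N(v)$ is an independent set, so $\sum_{u \in N(v)} d_u = \sum_{u \in N(v)} d_{V \setminus N(v)}(u) \le e(V)$ counted appropriately; more precisely, the classical Mantel-type inequality gives $\lambda^2 = \sum_v x_v (A^2\bm{x})_v / \dots$, but the cleanest route is: since $G$ is triangle-free, for any edge $uv$ the sets $N(u)$ and $N(v)$ are disjoint, hence $d_u + d_v \le n$; summing, $\sum_{uv \in E} (d_u + d_v) = \sum_v d_v^2 \le $ (something), which via $\lambda^2 \le \max_v (A^2 \bm x)_v/x_v$ and the triangle-free structure yields $\lambda^2 \le m$ (this is essentially Nosal's theorem). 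The point is to track the \emph{slack} in this inequality.

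First I would recall Nosal's bound $\lambda^2(G) \le m$ for triangle-free $G$, and its proof via the Rayleigh quotient: $\lambda^2 = \bm{x}^T A^2 \bm{x} = \sum_v (A\bm{x})_v^2$, but more usefully $\lambda^2 \sum_v x_v^2 = \sum_v x_v \sum_{u \sim v} \sum_{w \sim u} x_w$. Because of triangle-freeness, when we expand $\sum_{uv \in E}(x_u + x_v)^2 = \sum_v d_v x_v^2 + 2\sum_{uv \in E} x_u x_v = \sum_v d_v x_v^2 + \lambda$, and compare with a cut-based lower bound for $m - e(G[A]) - e(G[B])$ over the bipartition induced by the eigenvector. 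Concretely, I would split $V = A \cup B$ according to $\sgn$ of a carefully chosen vector (or just take a maximum cut), let $q = e(G[A]) + e(G[B])$ be the number of edges inside parts, and show that the triangle-free condition forces $\lambda^2 \le m - c\,q$ for an absolute constant $c > 0$. Then the hypothesis $\lambda \ge (1 - \varepsilon/2)\sqrt{m}$ gives $m - c q \ge \lambda^2 \ge (1-\varepsilon/2)^2 m \ge (1 - \varepsilon) m$, whence $q \le \varepsilon m / c$. Rescaling $\varepsilon$ (or absorbing $c$) completes the argument.

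The cleanest way to get the inequality $\lambda^2 \le m - c\,q$: for each edge $uv$ inside a part, triangle-freeness is \emph{not} what helps directly — rather, one uses that a maximum cut of a graph with $q$ "extra" internal edges still has at least $m/2 + \dots$ edges, and combines $\lambda^2 \le m$ with a sharper count. Actually the slicker approach, which I would pursue: apply $\lambda^2(G) \le \lambda^2(H)$ where $H$ is obtained by a switching that removes internal edges, or note directly that if $G$ is triangle-free then $\lambda^2(G) \le e(G) - \nu$ where $\nu$ relates to the non-bipartiteness — but since such a clean statement may not be at hand, I would instead argue by contradiction: suppose every bipartition leaves more than $\varepsilon m$ edges inside. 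Take the vertex partition $A = \{v : x_v \text{ "large"}\}$ versus its complement is the wrong move; instead partition by the second eigenvector or just by max-cut, and use that $G$ minus the internal edges $G_0$ is bipartite and triangle-free with $e(G_0) \ge m - \varepsilon m$ edges but $\lambda(G_0) \le \sqrt{e(G_0)} \le \sqrt{m}$, together with the Rayleigh estimate $\lambda(G) \le \lambda(G_0) + \lambda(G_1)$ where $G_1$ carries the $q$ internal edges, $\lambda(G_1) \le \sqrt{2q}$ — wait, that only gives $\lambda(G) \le \sqrt{m} + \sqrt{2q}$, which is too weak.

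\medskip

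So the real obstacle, and where I would concentrate effort, is obtaining a bound of the form $\lambda^2(G) \le m - c\, q$ rather than the additive $\lambda(G) \le \sqrt{m} + O(\sqrt q)$ that naive edge-removal yields. The right tool is a \emph{weighted} version of Mantel/Nosal: using the Perron eigenvector as weights, for triangle-free $G$ one has $\lambda^2 = \sum_{uv \in E}(x_u + x_v) \cdot (\text{stuff}) \le \sum_{uv \in E}(x_u^2 + x_v^2) \cdot (\text{independence of } N(u), N(v))$; chasing this, triangle-freeness gives $(A^2\bm{x})_v \le d_v \cdot \max_{u \sim v} x_u$-type bounds, and one extracts that each internal edge of a near-optimal cut contributes a definite loss $\Omega(x_u x_v)$ to the deficit $m - \lambda^2$, while Lemma~\ref{lem-product-entry}-style reasoning (or a direct truncation on $\bm{x}$, as the paper does elsewhere via \Cref{lem:close-to-Kab}) shows edges carrying tiny weight are few. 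I would therefore: (i) pass to the subgraph $G'$ on vertices with $x_v \ge \eta m^{-1/4}$, losing $o(m)$ edges and barely changing $\lambda$; (ii) on $G'$, run the weighted triangle-free inequality to get $\lambda^2(G') \le e(G') - c' q'$ with $q'$ the internal edges of its natural bipartition; (iii) conclude $q' = O(\varepsilon m)$ and lift back to $G$. The heart of the matter is step (ii), the weighted Mantel deficit bound; everything else is bookkeeping of the kind already carried out in the preceding lemmas.
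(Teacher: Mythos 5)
Your plan correctly isolates the crux---that the additive estimate $\lambda(G) \le \lambda(G_0) + \lambda(G_1) \le \sqrt{m} + \sqrt{2q}$ obtained by naive edge removal is too weak, and that one needs a multiplicative deficit bound of the form $\lambda^2(G) \le m - c\,q$---but the proposal never actually establishes that bound. You explicitly defer ``the weighted Mantel deficit bound'' as ``the heart of the matter'' and leave it as an unproved step (ii) of a sketch. Moreover, the bipartitions you propose to run it on (a maximum cut, the sign pattern of the second eigenvector, or a truncation of the Perron vector) are not ones for which such an inequality falls out cleanly, and the preliminary truncation step (i) is unnecessary. As written, this is a programme rather than a proof, and the gap sits exactly at the step you flag as central.

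The missing idea is the choice of bipartition. Let $u$ be a vertex maximizing $x_u$, and set $U = N(u)$ and $W = V(G)\setminus(N(u)\cup\{u\})$. Expanding $\lambda^2 x_u = \sum_{j\in N(u)}\sum_{k\in N(j)} x_k$ and classifying the $2$-walks from $u$ by their endpoint, the walks ending in $U$ contribute nothing because $N(u)$ is an independent set (triangle-freeness), while the walks ending at $u$ or in $W$ contribute at most $\bigl(d(u) + e(U,W)\bigr)x_u = \bigl(m - e(W)\bigr)x_u$, using $x_k \le x_u$ and $e(U)=0$. Hence $\lambda^2 \le m - e(W)$, which is precisely the deficit bound you were after, with $c=1$ and $q = e(W)$ for the bipartition $\bigl(U,\{u\}\cup W\bigr)$. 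Combined with $\lambda^2 \ge (1-\varepsilon/2)^2 m \ge (1-\varepsilon)m$, this yields $e(W) < \varepsilon m$, and deleting the edges inside $W$ leaves a bipartite graph. Without this (or an equivalent) concrete derivation of the deficit inequality, your argument does not close.
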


\begin{proof}  
Let $u\in V(G)$ be a vertex such that 
$x_u= \max\{x_v: v\in V(G)\}$. 
We denote $U:=N(u)$ and $W:=V(G)\setminus (N(u)\cup \{u\})$.  
Then 
\[  \lambda^2 x_u =\sum_{j\in N(u)} \sum_{k\in N(j)} x_k 
= d(u)x_u + \sum_{v\in U} d_U(v)x_v + 
\sum_{w\in W}d_U(w) x_w. \]
Since $G$ is triangle-free, we have $d_U(v) = 0$ whenever $v \in U$. Thus we have
$$(1- \varepsilon )mx_u < \lambda^2 x_u= d(u)x_u + \sum_{w\in W}d_U(w) x_u = (d(u) + e(U, W)) x_u.$$
It follows that 
$$e(W) = 
m- d(u) - e(U,W) <  \varepsilon m.$$ 
After removing all edges within 
$W$, since there are at most $\varepsilon m$ edges in $W$, we obtain the desired bipartite graph with two vertex parts $\{u\}\cup W$ and $U$.  
\end{proof}

We note that this gives the following concise vertex-spectral stability theorem.

\begin{cor}  \label{cor-bipartite-vertex-spectral}
For any $0\le \varepsilon \le 1$ and $n\ge 4/\varepsilon$, if $G$ is an $n$-vertex triangle-free graph with 
\[   \lambda (G) \ge 
(1- \varepsilon/2 )\cdot  \lambda (T_{n,2}), \]  then $G$ can be made bipartite by removing at most $\varepsilon n^2$ edges. 
\end{cor}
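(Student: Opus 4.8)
The plan is to obtain \Cref{cor-bipartite-vertex-spectral} directly from \Cref{lem-bipartite} after a one-line normalization. The point is to compare the number of edges $m$ with $\lambda(T_{n,2})^2$. Since $T_{n,2}$ is the complete bipartite graph $K_{\lceil n/2\rceil,\lfloor n/2\rfloor}$, we have $\lambda(T_{n,2})=\sqrt{\lceil n/2\rceil\lfloor n/2\rfloor}=\sqrt{\lfloor n^2/4\rfloor}$, while Mantel's theorem gives $m=e(G)\le\lfloor n^2/4\rfloor$ because $G$ is triangle-free. Hence $\sqrt{m}\le\lambda(T_{n,2})$, and together with the hypothesis $\lambda(G)\ge(1-\varepsilon/2)\lambda(T_{n,2})$ this yields
\[ \lambda(G)\ \ge\ \Bigl(1-\tfrac{\varepsilon}{2}\Bigr)\sqrt{m}. \]
(We may assume $\varepsilon>0$, as the hypothesis $n\ge 4/\varepsilon$ is otherwise unsatisfiable.)

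Next I would apply \Cref{lem-bipartite} with parameter $\varepsilon$ to this inequality: it produces a bipartite subgraph of $G$ obtained by deleting at most $\varepsilon m$ edges. Finally, $\varepsilon m\le\varepsilon\lfloor n^2/4\rfloor\le\varepsilon n^2/4\le\varepsilon n^2$, which is the asserted bound. In fact the argument gives the stronger conclusion that $\varepsilon n^2/4$ deletions suffice, and the hypothesis $n\ge 4/\varepsilon$ is used only to exclude the vacuous case $\varepsilon=0$.

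I do not expect any genuine obstacle here: the corollary is essentially a repackaging of \Cref{lem-bipartite}. The single point to be careful about is the direction of the estimate $\sqrt m\le\lambda(T_{n,2})$ — it must point this way, so that a relative loss of $\varepsilon/2$ measured against $\lambda(T_{n,2})$ is at least as large as the same relative loss measured against $\sqrt m$, which is precisely the form of the spectral hypothesis that \Cref{lem-bipartite} consumes — and this comparison is exactly what Mantel's theorem supplies.
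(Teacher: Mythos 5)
Your proof is correct, and it takes a genuinely different --- and in fact simpler --- route than the paper's. The paper's first proof also reduces to \Cref{lem-bipartite}, but it derives the needed inequality $\lambda(G)\ge(1-\varepsilon/2)\sqrt{m}$ by combining the Rayleigh bound $\lambda(G)\ge 2m/n$ with $\lambda(T_{n,2})\ge \lfloor n/2\rfloor\ge (n-1)/2$, and the resulting arithmetic $(\varepsilon/2-\varepsilon^2/4)n\ge 1-\varepsilon/2$ is exactly where the hypothesis $n\ge 4/\varepsilon$ is consumed. You instead invoke Mantel's theorem to get $m\le\lfloor n^2/4\rfloor=\lambda(T_{n,2})^2$, so that $\sqrt{m}\le\lambda(T_{n,2})$ and the normalization is immediate; the hypothesis $n\ge 4/\varepsilon$ then plays no role beyond excluding the vacuous case $\varepsilon=0$, and you even obtain the stronger deletion bound $\varepsilon n^2/4$. (The paper also offers a second proof that bypasses \Cref{lem-bipartite} entirely, using Nosal's bound $\lambda(G)\le\sqrt{m}$ to force $e(G)\ge\lfloor n^2/4\rfloor-\varepsilon n^2$ and then a maximum-independent-set argument; your argument is different from that one as well.) The only points worth checking are the identity $\lceil n/2\rceil\lfloor n/2\rfloor=\lfloor n^2/4\rfloor$ and the direction of the comparison $\sqrt{m}\le\lambda(T_{n,2})$, and both are exactly as you state: a relative deficit of $\varepsilon/2$ against the larger quantity $\lambda(T_{n,2})$ implies the same relative deficit against $\sqrt{m}$, which is the form of hypothesis that \Cref{lem-bipartite} requires.
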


\begin{proof}
    The Rayleigh formula implies $\lambda (G)\ge \frac{2m}{n}$, which together with the assumption gives  
    \[ \lambda^2(G)\ge \frac{2m}{n} \lambda (G) \ge 
    \frac{2m}{n} \left(1- \frac{\varepsilon}{2}\right) 
    \lambda (T_{n,2}).\]  
    Note that $\lambda (T_{n,2}) \ge \delta (T_{n,2}) =\lfloor \frac{n}{2}\rfloor \ge \frac{n-1}{2}$. 
    To apply Lemma \ref{lem-bipartite}, it suffices to show that 
    \[ \frac{2m}{n}\left(1- \frac{\varepsilon}{2}\right) \frac{n-1}{2} \ge 
    \left(1-\frac{\varepsilon}{2}\right)^2 m, \] 
which is equivalent to showing that 
\[ \left(1- \frac{\varepsilon}{2}\right) {(n-1)} 
\ge 
\left(1- \varepsilon+ \frac{\varepsilon^2}{4} \right)n. \] 
    The inequality holds by $0\le \varepsilon \le 1$ and 
    $n\ge 4/\varepsilon$, and then  
    $(\frac{\varepsilon}{2} -\frac{\varepsilon^2}{4}) n \ge \frac{1}{4}\varepsilon n > 1- \frac{\varepsilon}{2}$. 
\end{proof}

In what follows, we give another proof of Corollary \ref{cor-bipartite-vertex-spectral}. 

\begin{proof}[Second proof]
Since $G$ is a triangle-free graph with $m$ edges, we know that $\lambda (G)\le \sqrt{m}$. 
    First of all, we claim that $e(G)\ge \lfloor n^2/4 \rfloor  -\varepsilon n^2$. 
    Otherwise, if $e(G)< \lfloor n^2/4 \rfloor  -\varepsilon n^2$, then 
    \[ \lambda (G)\le \sqrt{m} 
    < \sqrt{\big\lfloor {n^2}/{4} \big\rfloor  -\varepsilon n^2} \le \left(1- \frac{\varepsilon}{2}\right)\lambda (T_{n,2}), \]  
    where the last inequality holds since $\lambda (T_{n,2}) = \sqrt{\lfloor n^2/4\rfloor}$ and  $\varepsilon \ge 0$. This contradicts with the assumption. Thus, we conclude that $e(G)\ge \lfloor n^2/4\rfloor - \varepsilon n^2$. 

    Let $A$ be an independent set of maximum size in $G$. 
Since $G$ is triangle-free, $N(v)$ is an independent set for every $v\in V(G)$, which implies $\Delta(G) \le |A|$. 
Setting $B:=V(G)\setminus A$, we get 
\[ \sum_{v\in B} d(v) \le |B| \Delta (G) \le |B| |A| 
\le \left\lfloor  {n^2}/{4} \right\rfloor.  \]  
On the other hand, it follows that 
\[  \sum_{v\in B} d(v) =e(A,B) + 2e(B) = e(G) + e(B).  \]
Since $e(G)\ge \lfloor {n^2}/{4}\rfloor - \varepsilon n^2$, we get $e(B)\le \varepsilon n^2$. 
Removing all edges within $B$, we can obtain a  bipartite subgraph of $G$ on the vertex parts $A$ and $B$, as needed.   
\end{proof}

To complete the proof of~\Cref{thm-stability-triangle}, 
we show in the rest of this subsection that the resulting bipartite subgraph obtained in Lemma \ref{lem-bipartite} 
is close to a complete bipartite graph. 
Throughout, we write $\bm{x}$ for the unit Perron--Frobenius eigenvector of $G$.

\begin{lemma}
\label{lem:close-to-Kab}
For every $\varepsilon \in (0,0.01)$, 
let $\delta = \varepsilon^4/100$. 
If $G$ is a bipartite graph with 
$m$ edges and $\lambda (G) \ge 
(1- \delta ) \sqrt{m}$, then there exist disjoint vertex subsets $U, V\subseteq V(G)$ such that 
\[ d(G, K_{U, V}) \leq \varepsilon m. \]  
\end{lemma}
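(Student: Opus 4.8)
The plan is to run a truncation argument on the Perron--Frobenius eigenvector of $G$, using the slackness in the bound $\lambda(G)\le\sqrt m$, which holds for every bipartite graph because $\lambda(G)^2=\lambda_{\max}(MM^{\mathsf{T}})\le\operatorname{Tr}(MM^{\mathsf{T}})=\lVert M\rVert_F^2=m$, where $M$ is the biadjacency matrix of $G$. First I would reduce to the case that $G$ is connected: the component $G_0$ achieving $\lambda(G)$ satisfies $\lambda(G_0)\le\sqrt{e(G_0)}$, hence $e(G_0)\ge(1-\delta)^2m\ge(1-2\delta)m$, the remaining components carry at most $2\delta m$ edges, and it suffices to prove the statement for $G_0$ up to an additive loss of $2\delta m$ in the edit distance (absorbed by adjusting constants). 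So assume $G$ is connected, with bipartition $V(G)=A\sqcup B$, positive unit Perron--Frobenius eigenvector $\bm{x}$, and $\sum_{i\in A}x_i^2=\sum_{j\in B}x_j^2=\tfrac12$ by \Cref{lem-square-equal}. Writing $s_A=\max_{i\in A}x_i$ and $s_B=\max_{j\in B}x_j$, I would record at the outset the elementary inequality
\[
s_As_B\ \ge\ \frac{\lambda(G)}{2m},
\]
which follows from $\lambda(G)=2\sum_{ij\in E}x_ix_j\le 2m\,s_As_B$.

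Next, fix a small $\eta=\eta(\varepsilon)>0$ and truncate, setting $U=\{i\in A:x_i>(1-\eta)s_A\}$ and $V=\{j\in B:x_j>(1-\eta)s_B\}$. Since each vertex of $U$ carries eigenvector weight exceeding $(1-\eta)s_A$ while $\sum_{i\in A}x_i^2=\tfrac12$, we get $|U|<\bigl(2(1-\eta)^2s_A^2\bigr)^{-1}$, and similarly for $|V|$; combining this with the inequality above and $\lambda(G)^2\ge(1-\delta)^2m$ gives
\[
|U|\,|V|\ <\ \frac{1}{4(1-\eta)^4 s_A^2 s_B^2}\ \le\ \frac{1}{(1-\eta)^4}\cdot\frac{m^2}{\lambda(G)^2}\ \le\ \bigl(1+O(\eta)+O(\delta)\bigr)m.
\]
Writing $e_0:=e_G(U,V)$ and noting that $G$ is bipartite with parts $A\supseteq U$, $B\supseteq V$, we have $d(G,K_{U,V})=(m-e_0)+(|U|\,|V|-e_0)\le 2(m-e_0)+\bigl(O(\eta)+O(\delta)\bigr)m$. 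Hence the whole statement reduces to showing that at most $\bigl(O(\eta)+O(\delta)\bigr)m$ edges of $G$ lie outside $U\times V$, i.e.\ that the edges of $G$ incident to the ``light'' set $L:=(A\setminus U)\cup(B\setminus V)$ number only $o_\varepsilon(m)$.

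This last step is the crux and the part I expect to be the main obstacle. The tool I would reach for is again the near-rank-one structure: with $\bm{u},\bm{v}$ the nonnegative unit top singular vectors of $M$ (so $\bm{x}=\tfrac1{\sqrt2}\binom{\bm{u}}{\bm{v}}$), one has $M=\lambda(G)\,\bm{u}\bm{v}^{\mathsf{T}}+R$ with $\lVert R\rVert_F^2=m-\lambda(G)^2\le 2\delta m$, so the number of pairs $(i,j)$ with $R_{ij}^2\ge c$ is at most $2\delta m/c$. For an edge $ij$ with $i\in L$ we have $R_{ij}^2=\bigl(1-2\lambda(G)x_ix_j\bigr)^2$, and one wants this bounded below by a fixed power of $\eta$: this is immediate whenever $2\lambda(G)x_ix_j$ is bounded away from $1$ (i.e.\ whenever deleting the near-maximal endpoint genuinely lowered the product), and the delicate regime is a light-incident edge whose $2\lambda(G)x_ix_j$ still sits very close to $1$ — which forces $2\lambda(G)s_As_B$ to sit just barely above $1$. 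Ruling that regime out, using $\lVert R\rVert_F^2\le 2\delta m$ together with the eigen-equation (for instance, the pair of maximal vertices, if a non-edge, would contribute $(2\lambda(G)s_As_B)^2$ to $\lVert R\rVert_F^2$), is where the real work lies; once it is done, the number of light-incident edges is $O(\delta\,\eta^{-c})m$, and choosing $\eta$ an appropriate small power of $\varepsilon$ with $\delta=\varepsilon^4/100$ makes the final bound $d(G,K_{U,V})\le\varepsilon m$ after a harmless rescaling of $\varepsilon$. An essentially equivalent route stays entirely on the eigenvector side: split $\lambda(G)=2\sum_{ij\in E}x_ix_j$ into its part inside $U\times V$ and the remainder, bound the remainder by $2\lambda(G)\sum_{v\in L}x_v^2$, and argue that $\sum_{v\in L}x_v^2=o_\varepsilon(1)$; the obstruction is the same, namely passing from a bound on the eigenvector weight of light-incident edges to a bound on their number.
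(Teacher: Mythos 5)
Your setup is sound and your key identity is the right one: the ``near--rank--one'' bound $\lVert R\rVert_F^2=m-\lambda^2(G)\le 2\delta m$ is, after dividing by $4\lambda^2$, exactly the paper's second-moment inequality
$\sum_{ij\notin E}x_i^2y_j^2+\sum_{ij\in E}\bigl(x_iy_j-\tfrac{1}{2\sqrt m}\bigr)^2\le \delta/2$,
and your reduction of the edit distance to ``few edges incident to the light set'' (given $|U||V|\le(1+O(\eta)+O(\delta))m$) is correct. But the proof is not complete: the step you yourself flag as ``where the real work lies'' is genuinely the entire content of the lemma, and neither of the two devices you sketch closes it. First, your proposed way to rule out the bad regime --- that the maximal pair $(i^*,j^*)$, if a non-edge, contributes $(2\lambda s_As_B)^2$ to $\lVert R\rVert_F^2$ --- yields nothing, since a single term of size $O(1)$ is swallowed by the budget $2\delta m\gg 1$; you have a one-sided bound $2\lambda s_As_B\ge \lambda^2/m\ge(1-\delta)^2$ but no upper bound on $s_As_B$, so you cannot conclude that a light vertex $i$ with $x_iy_j\approx\frac{1}{2\lambda}$ is impossible. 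Second, and more structurally, truncating at $(1-\eta)$ times the \emph{maximum} is fragile: all the leverage in the second-moment inequality comes from knowing the \emph{scale} of the threshold relative to $1/\sqrt m$, and the maximum gives you no such two-sided control.

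The paper's proof fixes precisely this by truncating at a \emph{quantile} instead: it takes $r,s$ minimal with $\sum_{i\le r}x_i^2\ge\varepsilon^2$ and $\sum_{j\le s}y_j^2\ge\varepsilon^2$, and its Claim 3.6 pins down $\frac{1}{3\sqrt m}<x_ry_s<\frac{1}{\sqrt m}$ (if $x_ry_s$ were too large, the $\ge\varepsilon^4/4$ second moment carried by the top $r\times s$ block would violate the budget; symmetrically if too small). With $U=\{x_u\ge x_r/4\}$, $V=\{y_v\ge y_s/4\}$, missing pairs in $U\times V$ then have $x_iy_j\ge\frac{1}{48\sqrt m}$ and are counted directly by the first sum, while edges leaving $U$ are split into those going to $j\ge s$ (weight $\le\frac{1}{4\sqrt m}$, hence each contributing $\ge\frac{1}{16m}$ to the second sum) and those going to the top $s-1$ vertices of $B$, which require a separate Cauchy--Schwarz argument exploiting $\sum_{j<s}y_j^2<\varepsilon^2$ together with $\lambda\ge(1-\delta)\sqrt m$. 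This last case --- many edges concentrated on a few very heavy $B$-vertices, e.g.\ star-like attachments --- is exactly the regime your argument leaves open, and it cannot be dispatched by the second-moment inequality alone because such edges can sit at weight exactly $\frac{1}{2\sqrt m}$. To repair your write-up you would need to (i) replace the max-based thresholds by mass-based ones (or prove a two-sided bound on $s_As_B$, which does not follow from your hypotheses alone), and (ii) supply the missing count of light-incident edges along the lines of the paper's Claim 3.8.
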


\begin{proof}
 Let $V(G)=A\sqcup B$ be a bipartition of $G$. Without loss of generality, we may assume that $A = \{1,2,\ldots ,a\}$ and $B = \{1,2,\ldots ,b\}$, and the unit Perron--Frobenius eigenvector of $G$ takes values $x_1 \geq \dots \geq x_a$ on $A$ and $y_1 \geq \dots \geq y_b$ on $B$.   
 It follows from Lemma \ref{lem-square-equal} that
    $$\sum_{i \in A} x_i^2 = \sum_{j \in B} y_j^2 = \frac{1}{2}.$$
    Therefore, we have
    $$\sum_{i \in A} \sum_{j \in B} x_i^2y_j^2 = \frac{1}{4}.$$
    On the other hand, the assumption gives 
    $$\sum_{ij \in E(G)} 2x_i y_j = \lambda(G) \geq (1 - \delta) \sqrt{m}.$$
    Combining the two inequalities, we obtain
    \begin{equation}
    \label{eq:Motzkin-Straus-Exploit-I}
    \sum_{ij \notin E(G)} x_i^2 y_j^2 + \sum_{ij \in E(G)} \left(x_i y_j - \frac{1}{2\sqrt{m}}\right)^2 \leq \delta/2,
    \end{equation}
    where $\sum_{ij \notin E(G)}$ goes over each pair $(i, j) \in (A \times B )\backslash E(G)$ exactly once. 

    \medskip 
    Let $r, s$ be the smallest integers such that
   $\sum_{i = 1}^r x_i^2 \geq \varepsilon^2 
    ~\text{and}~  
    \sum_{j = 1}^s y_j^2 \geq \varepsilon^2$, respectively.   

\begin{claim}\label{cl:1}
We have $\frac{1}{3\sqrt{m}} < x_r y_s < \frac{1}{\sqrt{m}}$.
\end{claim}

    \begin{poc}
    It is easy to verify that $(x-\frac{1}{2\sqrt{m}})^2 \ge \frac{1}{4}x^2$ holds for every $x\in (-\infty, \frac{1}{3\sqrt{m}})\cup  (\frac{1}{\sqrt{m}}, +\infty)$. 
    If $x_r y_s \ge \frac{1}{\sqrt{m}}$, then 
    for any $i \le r$ and $j \le s$, we have $x_i y_j \geq \frac{1}{\sqrt{m}}$. This implies
    $$\min\left\{ x_i^2 y_j^2, \left(x_i y_j- \frac{1}{2\sqrt{m}} \right)^2 \right\} \geq \frac{1}{4} x_i^2 y_j^2.$$
    Summing over $i\le r$ and $j\le s$, we have
    $$\sum_{i=1}^r \sum_{j=1}^s \min\left\{ x_i^2 y_j^2, \left(x_i y_j - \frac{1}{2\sqrt{m}}\right)^2 \right\} \geq \frac{1}{4} \left(\sum_{i =1}^r x_i^2 \right) \left( \sum_{j=1}^s y_j^2 \right) \geq \frac{1}{4} \varepsilon^4$$
    which contradicts with  (\ref{eq:Motzkin-Straus-Exploit-I}). If $x_r y_s \leq \frac{1}{4\sqrt{m}}$, then $x_iy_j \le \frac{1}{4\sqrt{m}}$ 
    for $i \ge r$ and $j \ge s$. 
    We can finish by an analogous argument that replaces $i \le r$ and $j \le s$ with $i \geq r$ and $j \geq s$.
    \end{poc}

     Let $U$ and $V$ be sets of vertices of $G$ defined as  
    \[ 
    U := \left\{u\in A: x_u \geq \frac{x_r}{ 4} \right\} 
    \quad~\text{and}~\quad 
    V := \left\{v \in B: y_v \geq \frac{y_s}{4} \right\}. \]
    In the rest of the proof, we use (\ref{eq:Motzkin-Straus-Exploit-I}) to show that $G$ has small edit distance with $K_{U, V}$. 
    
    First of all, we show that there are few  vertex pairs in $U \times V$ that are non-edges in $G$. 

\begin{claim}\label{cl:2}
We have $|E(K_{U, V}) \backslash E(G)| \leq \frac{1}{3} \varepsilon m $.
 \end{claim}  
  
    \begin{poc}
    We need to show that $G$ misses at most $\frac{1}{3} \varepsilon m$ edges between $U$ 
    and $V$.  By~\Cref{cl:1}, for any $(i, j) \in (U\times V) \backslash E(G)$, we have 
    $$x_i y_j \geq \frac{1}{16} x_r y_s  \geq  \frac{1}{48 \sqrt{m}}.$$ 
    Using (\ref{eq:Motzkin-Straus-Exploit-I}), we obtain
        $$|E(K_{U, V}) \backslash E(G)| \leq 
        (48\sqrt{m})^2 \cdot \sum_{ij \notin E(G)} x_i^2 y_j^2
        \leq 
        (48 \sqrt{m})^{2} \cdot \frac{\delta}{2} 
        \leq 1152 \delta m \leq \frac{1}{3} \varepsilon m ,$$
        as desired.
    \end{poc}

    Moreover, we prove that there are few edges in $G[\bar{U},B]$ and 
    $G[A,\bar{V}]$. 

\begin{claim}\label{cl:3}
    We have $e(E(G) \cap (\bar{U} \times B)) \leq \frac{1}{3} \varepsilon m $, and symmetrically $e(E(G) \cap (A \times \bar{V})) \leq \frac{1}{3} \varepsilon m $.
\end{claim} 
    
    \begin{poc}
        Let $E_1$ be the set of edges $ij$ in $E(G) \cap (\Bar{U} \times B)$ with $j \geq s$, and let $E_2$ be the set of edges $ij$ in $E(G) \cap (\Bar{U} \times B)$ with $j < s$. It suffices to show that $\max\{\abs{E_1}, \abs{E_2}\} \leq 
        \frac{1}{6} \varepsilon m $.

        For any $ij \in E_1$, by Claim \ref{cl:1}, we have $x_i y_j \leq \frac{1}{4} x_r y_s  \leq \frac{1}{4\sqrt{m}}$. Then together with (\ref{eq:Motzkin-Straus-Exploit-I}), we have 
        $$\abs{E_1} \cdot \frac{1}{16 m} \leq \sum_{ij \in E_1} \left(x_i y_j - \frac{1}{2\sqrt{m}} \right)^2 \leq \frac{1}{2}\delta , $$
        which implies $\abs{E_1} \leq 8 \delta m < \frac{1}{6} \varepsilon m $, as needed. 

        To bound $|E_2|$, we use the Cauchy--Schwarz inequality to obtain
        \begin{align}
       \notag \sum_{ij \in E(G) \backslash E_2} 2x_i y_j  & \leq 2 \sqrt{\sum_{ij \in E(G) \backslash E_2} x_i^2 y_j^2} \cdot \sqrt{ |E(G) \backslash E_2| } \\ 
        & \leq 2 \sqrt{\sum_{i \in A, j \in B} x_i^2 y_j^2} \cdot \sqrt{|E(G) \backslash E_2|} \notag \\
        &= \sqrt{m - |E_2| }. \label{eq-notin-E2}
        \end{align}
        By the definition of $s$, we have $\sum_{j < s} y_j^2 < \varepsilon^2$. Observe that 
        $$\sum_{ij \in E_2} x_i^2 y_j^2 \leq \left(\sum_{i \in A} x_i^2\right) \left( \sum_{j < s} y_j^2 \right) < \frac{\varepsilon^2}{2}.$$
        Applying the Cauchy--Schwarz inequality, we obtain
        \begin{align} \label{eq-in-E2}
        \sum_{ij \in E_2} 2x_i y_j \leq 2 \sqrt{\sum_{ij \in E_2} x_i^2 y_j^2} \cdot \sqrt{\abs{E_2}} \leq \sqrt{2 \varepsilon^2 \abs{E_2}}.
        \end{align}
        Combining (\ref{eq-notin-E2}) and (\ref{eq-in-E2}), we conclude that 
        $$ 
        (1 - \delta) \sqrt{m} \le 
        \lambda(G) = \sum_{ij \in E(G)} 2x_i y_j 
        \le \sqrt{m - \abs{E_2}} + \sqrt{2 \varepsilon^2 \abs{E_2}}.$$
        By the Cauchy--Schwarz inequality again, we have
        $$\sqrt{m - \abs{E_2}} + \sqrt{2 \varepsilon^2 \abs{E_2}} \leq \sqrt{(1 + 4\varepsilon^2) \left(m - \abs{E_2} + \frac{1}{2}\abs{E_2}\right)}.$$
        Therefore, we obtain
        $$ (1 - \delta)^2 m\le 
        (1 + 4\varepsilon^2) \left(m - \abs{E_2} + \frac{1}{2}\abs{E_2} \right), $$
        which implies that $\abs{E_2} \leq \frac{1}{6} \varepsilon m $ by our choice of $\delta = \varepsilon^4 / 100$ and $\varepsilon \in (0, 0.01)$.
    \end{poc}
    Combining~\Cref{cl:2} and~\Cref{cl:3}, we conclude that 
    $$d(G, K_{U, V}) \leq e(K_{U, V} \backslash G) + e(E(G) \cap (\bar{U} \times B)) + e(E(G) \cap (A \times \bar{V})) \leq \varepsilon m, $$ 
    as desired.
\end{proof}

We can now prove \Cref{thm-stability-triangle}.

\begin{proof}[{\bf Proof of \Cref{thm-stability-triangle}}]
We take $\delta = \frac{1}{4}\varepsilon^{36}$. 
Assume that $G$ is a triangle-free graph with $m$ edges and $\lambda (G)\ge (1- \frac{1}{4}\varepsilon^{36})\sqrt{m}$. 
Using Lemma \ref{lem-bipartite}, we obtain a bipartite subgraph $G'\subseteq G$ such that $G'$ has $m'\ge m - \frac{1}{2}\varepsilon^{36} m $ edges. 
Note that the Rayleigh's formula gives 
$$\lambda (G)\le \lambda (G') + \lambda (G\setminus G') \le \lambda (G') + \sqrt{2(m-m')}\le \lambda (G') + \varepsilon^6\sqrt{m}.$$ 
Then $\lambda (G')\ge (1-\delta)\sqrt{m} - \varepsilon^6 \sqrt{m} \ge (1-2\varepsilon^6) \sqrt{m'}> (1-\frac{\varepsilon^4}{1600})\sqrt{m'}$. Now, we can apply~\Cref{lem:close-to-Kab} to $G'$ to obtain two disjoint vertex sets $U,V \subseteq V(G')$ such that $d(G',K_{U,V})\le \frac{\varepsilon}{2}m$. Thus, we conclude that $d(G,K_{U,V})\le d(G,G') + d(G',K_{U,V})\le \frac{1}{2}\varepsilon^{36} m + \frac{1}{2}\varepsilon m< \varepsilon m$, as desired. 
\end{proof}

\subsection{The case of cliques}\label{sec:F-clique}

In this subsection, we prove \Cref{thm:nikiforov-stability} for larger forbidden cliques $F = K_{r + 1}$ with $r\ge 3$. 
We rephrase this case as the following theorem. 

\begin{theorem} 
\label{thm-stability-clique-4}
    For every $r\ge 3$ and $\varepsilon>0$, there exist $m_0$ and $\delta >0$ such that if $G$ is a $K_{r+1}$-free graph with $m\ge m_0$ edges and 
    \[ \lambda^2(G)\ge 
    \left( 1-\frac{1}{r}-\delta \right) 2m, \] 
    then there exists a vertex set $C\subseteq V(G)$ such that $d(G,T_{C,r})\le \varepsilon m$. 
\end{theorem}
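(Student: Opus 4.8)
The plan is to follow the strategy outlined in the paper's overview: exploit the slackness in the Motzkin--Straus inequality (\Cref{lem-MS}) used in Nikiforov's bound~\eqref{eq-Niki-2002-cpc}, extract from $G$ a dense induced subgraph $G'$ on a vertex set $C$ of size $|C| \le (1+o(1))\sqrt{2e(G')/(1-\frac1r)}$ which carries almost all the edges, then appeal to the vertex-spectral stability \Cref{thm-niki-stability}. First I would set up the unit Perron--Frobenius eigenvector $\bm x$ of $G$ and normalize via $\bm z = \bm x / \|\bm x\|_1$, so that $\sum_i z_i = 1$ and $\sum_{ij \in E} z_i z_j = \frac{\lambda(G)}{2\|\bm x\|_1^2}$. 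Combining the spectral hypothesis $\lambda^2(G) \ge (1-\frac1r-\delta)2m$ with $\lambda(G) = 2\sum_{ij\in E} x_ix_j$ and the Cauchy--Schwarz type bound $2m \cdot \sum x_i^2 \ge (\sum_j d_j x_j)^2 / (\text{something})$ — more directly, using $\lambda(G)\|\bm x\|_1 \ge \sum_v d_v x_v$ and $\lambda^2(G) = \sum_v (\sum_{u\sim v} x_u)^2 \le \ldots$ — one derives that $\sum_{ij\in E} z_i z_j$ is within $O(\sqrt\delta)$ of its Motzkin--Straus maximum $\frac12(1-\frac1r)$. The point is that since $G$ is $K_{r+1}$-free, \Cref{lem-MS} caps this sum, and the spectral assumption forces near-equality, hence near-equality in the sense that the ``mass distribution'' $\bm z$ is close to being uniform on an $r$-partite structure.

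The technical heart is turning this near-equality into the claim that \emph{most edges $ij$ have $x_i x_j = \Theta(m^{-1/2})$} and \emph{a large vertex subset $V'$ has each $x_v = \Theta(m^{-1/4})$}. Here I would run an argument in the same spirit as the proof of \Cref{lem:close-to-Kab}, but adapted to the $r$-partite clique case rather than the bipartite one: write the slackness as a sum $\sum_{ij\in E}(x_ix_j - c\,m^{-1/2})^2 + (\text{non-edge terms}) \le O(\sqrt\delta) m$ for an appropriate constant $c = c(r)$, which isolates a set of ``heavy'' edges whose Perron weight is pinned near $c\,m^{-1/2}$ and shows the total Perron-weight of light edges is negligible. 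Using \Cref{lem-product-entry} (or rather the fact that we may pass to a minimal counterexample so that every surviving edge has $x_ix_j = \Omega(m^{-1/2})$, after discarding few low-weight edges as in \Cref{lem:removing-bad-edges}) together with \Cref{lem:max} (which gives $\max_v x_v = O(m^{-1/4})$ once $\lambda^2(G) \ge (1+\Omega(1))m$, valid here since $r\ge 3$ means $1-\frac1r \ge \frac23 > \frac12$), one concludes that after removing $o(m)$ bad edges, the vertex set $V'$ supporting the remaining edges satisfies $x_v = \Theta(m^{-1/4})$ for all $v\in V'$, and hence $|V'| = O(\sqrt m)$ by $\sum x_v^2 = 1$. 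A counting estimate then refines this to $|V'| \le (1+o(1))\sqrt{2e(G[V'])/(1-\frac1r)}$: essentially, if $|V'|$ were larger than this, the Perron weight concentrated on $V'$ together with the edge count would violate the spectral lower bound.

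With $C := V'$ and $G' := G[C]$ in hand, Rayleigh gives $\lambda(G') \ge \lambda(G) - \lambda(G\setminus G') \ge \lambda(G) - o(\sqrt m)$ (using \Cref{lem:remove-edges} and $d(G,G') = o(m)$), so $\lambda(G') \ge (1-\frac1r - o(1))\sqrt{2e(G')} \ge (1-\frac1r - o(1))|C|$ by the bound on $|C|$. Now $G'$ is a $K_{r+1}$-free graph on $|C|$ vertices whose spectral radius is within $o(1)\cdot|C|$ of $(1-\frac1r)|C|$, so \Cref{thm-niki-stability} applies and yields a Tur\'an graph $T_{C,r}$ with $d(G', T_{C,r}) \le \frac{\varepsilon}{2}e(G') \le \frac{\varepsilon}{2}m$ (choosing $\delta$ small enough to absorb the various $o(1)$'s into the $\varepsilon$ of that theorem). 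Combining, $d(G, T_{C,r}) \le d(G,G') + d(G', T_{C,r}) \le o(m) + \frac{\varepsilon}{2}m \le \varepsilon m$, as desired. I expect the main obstacle to be the middle step: cleanly quantifying how the Motzkin--Straus slackness propagates to the ``$x_v = \Theta(m^{-1/4})$ on a large set'' structural statement, in particular controlling the interaction between the heavy-edge set and the vertices of intermediate Perron weight, and making sure the vertex-count inequality $|C| \le (1+o(1))\sqrt{2e(G')/(1-\frac1r)}$ comes out with the right constant — this is where \Cref{lem:nikiforov-stability-clique} (referenced in the overview) does the real work and where the estimates are most delicate.
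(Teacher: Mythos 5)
Your overall strategy coincides with the paper's: exploit the slackness in the Motzkin--Straus step of Nikiforov's bound, extract an induced subgraph $G'$ on a set $V'$ with $d(G,G')\le o(m)$ and $|V'|\le(1+o(1))\sqrt{2m/(1-\tfrac1r)}$, and then feed $G'$ to the vertex-spectral stability \Cref{thm-niki-stability}. The final reduction step of your outline is correct and is exactly how the paper concludes. The gap is in the middle, and it is not merely a matter of "delicate estimates": the route you propose (\Cref{lem:removing-bad-edges} to force $x'_ix'_j\ge \delta m^{-1/2}$ on surviving edges, plus \Cref{lem:max} for $\max_v x'_v=O(m^{-1/4})$) only yields $x'_v=\Theta(m^{-1/4})$ with uncontrolled constants, hence $|V'|=O_{\delta}(\sqrt m)$ with a large constant. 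That suffices for the general-$F$ reduction in the paper's Section 4.3 (where one only needs $|V'|=O(\sqrt m)$ to invoke the removal lemma), but it is \emph{not} enough here: to get $\lambda(G')\ge(1-\tfrac1r-o(1))|V'|$ you need the sharp constant, equivalently $\min_{v\in V'}x_v\ge(1-o(1))\alpha$ with $\alpha=\bigl(\tfrac{r-1}{2rm}\bigr)^{1/4}$, not merely $x_v\ge c\alpha$.

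Your one-sentence justification for the refinement ("if $|V'|$ were larger, the Perron weight together with the edge count would violate the spectral lower bound") does not close this. Knowing that almost every edge satisfies $x_ix_j=(1\pm o(1))\alpha^2$ does not prevent a constant fraction of the $\ell_2$-mass from sitting on vertices with $x_v\approx\alpha/2$ all of whose "good" neighbours have $x_u\approx 2\alpha$; such a configuration inflates $|V'|$ by a constant factor without disturbing the edge-weight statistics. The paper's \Cref{lem:nikiforov-stability-clique} kills exactly this scenario: the good edges among the deviant vertices form a \emph{bipartite} graph (low-weight vertices can only be good-adjacent to high-weight ones), whose spectral radius is at most $\sqrt m$, whereas $\lambda(G)\ge\tfrac{10}{9}\sqrt m$ precisely because $r\ge3$; iterating the eigenvector equation over the nested sets $\Gamma_t$ then forces the deviant mass $s_t$ to decay geometrically to $O(\delta^{1/4})$. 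This bipartiteness-plus-iteration mechanism — which is also where the hypothesis $r\ge3$ genuinely enters — is absent from your outline and cannot be replaced by a direct counting estimate. (A minor additional point: the paper applies \Cref{lem-MS} to the vector $(x_i^2)$, which already sums to $1$, rather than to $\bm x/\lVert\bm x\rVert_1$; this is what makes the slackness identity $\sum_{ij\in E}(x_ix_j-\alpha^2)^2\le\delta$ fall out cleanly against the Rayleigh identity, whereas your normalization leaves $\lVert\bm x\rVert_1$ to be related to $m$ separately.)
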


We provide two different proofs of \Cref{thm-stability-clique-4}. 
The first proof is based on the Motzkin--Straus inequality \Cref{lem-MS}, and then reduce the edge-spectral stability in \Cref{thm-stability-clique-4} to the vertex-spectral stability in \Cref{thm-niki-stability}. 
The second alternative proof applies a recent result involving the stability of the Kruskal--Katona type result for hypergraphs. We present the first proof here and postpone the second proof to the Appendix \ref{sec-A}. 

First of all, we exploit the slackness in Nikiforov's application of the Motzkin--Straus inequality. Using this slackness, we show that $G$ contains an induced subgraph $G'$ such that $d(G,G')=o(m)$ 
and $\lambda (G')= (1-\frac{1}{r}- o(1))|G'|$. This puts us in a position to use \Cref{thm-niki-stability}.

\begin{lemma}
\label{lem:nikiforov-stability-clique}  
For every $\varepsilon > 0$ and $r \geq 3$, 
there exist $\delta>0$ and  $m_0$ such that the following holds. If $G$ is a $K_{r + 1}$-free graph with $m\ge m_0$ edges and $\lambda^2 (G)\ge (1- \frac{1}{r} - \delta) 2m$, then $G$ has an induced subgraph $G'$ such that $\abs{V(G')} \leq (1 + \varepsilon) \sqrt{2m / (1 - 1/r)}$ and $d(G, G') \leq \varepsilon m$.
\end{lemma}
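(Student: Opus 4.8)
The plan is to extract the slackness in Nikiforov's proof of \eqref{eq-Niki-2002-cpc} via the Motzkin--Straus inequality, and then perform a two-level truncation on the Perron--Frobenius eigenvector $\bm{x}$ of $G$. Let $\lambda = \lambda(G)$ and recall $\lambda = 2\sum_{uv\in E}x_ux_v$. Normalise: set $z_v = x_v / \|\bm{x}\|_1$, so $\sum z_v = 1$, and Motzkin--Straus gives $\sum_{uv\in E}z_uz_v \le \frac12(1-\frac1r)$, i.e. $\lambda \le (1-\frac1r)\|\bm{x}\|_1^2$. Combined with the hypothesis $\lambda^2 \ge (1-\frac1r-\delta)2m$ this forces $\|\bm{x}\|_1^2 \ge (1-O(\delta))\sqrt{2m/(1-\frac1r)}$ after using $\lambda \le \sqrt{2m}$; more importantly, rewriting $\lambda = 2\sum x_ux_v$ against $\lambda^2 \le 2m\cdot(1-\frac1r)\cdot\frac{2\sum x_ux_v}{\lambda}$-type bounds shows that the Motzkin--Straus inequality is nearly tight, so $\sum_{uv\in E}z_uz_v = \frac12(1-\frac1r) - O(\delta')$ where $\delta' = \delta'(\delta)\to 0$.

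Next I would turn near-tightness of Motzkin--Straus into structural information. The key analytic point: if $\sum_{uv\in E}x_ux_v$ is close to its maximum subject to $\sum x_v^2$ fixed and $G$ being $K_{r+1}$-free, then most of the eigenvector mass sits on vertices with comparable weight. Concretely, I expect to show $\max_v x_v = O(m^{-1/4})$ (or $O(\lambda^{-1/2})$), using a variant of the argument in \Cref{lem:max}: if some coordinate were much larger than $m^{-1/4}$, expanding $\lambda^2 x_i = \sum_{k} d_{N(i)}(k)x_k$ would give slack contradicting $\lambda^2 \ge (1-\frac1r-\delta)2m$. Then I would define $V' := \{v : x_v > \eta m^{-1/4}\}$ for a small constant $\eta = \eta(\varepsilon)$ and $G' := G[V']$. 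Using $\sum x_v^2 = 1$ we get $|V'| \le \eta^{-2}\sqrt m$; more care (bounding the contribution of low-weight vertices to $\lambda = 2\sum x_ux_v$ via Cauchy--Schwarz, since edges touching $V\setminus V'$ contribute little) refines this to $|V'| \le (1+\varepsilon)\sqrt{2m/(1-\frac1r)}$. The edit-distance bound $d(G,G')\le\varepsilon m$ follows because edges incident to $V\setminus V'$ number at most $o(m)$: if there were $\ge \varepsilon m$ such edges, each with one low-weight endpoint, their total contribution $2\sum_{uv\in E,\, v\notin V'}x_ux_v$ is small while removing them should not decrease $\lambda^2$ by more than a controlled amount, and yet the remaining graph would have too few edges to support $\lambda^2 \ge (1-\frac1r-\delta)2m$ — contradiction. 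This last counting step is where the near-tightness of Motzkin--Straus is really used: slack in MS translates directly into an upper bound on the weight carried by "bad" edges.

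The main obstacle I anticipate is the quantitative passage from "Motzkin--Straus is nearly tight" to "the eigenvector is flat on $V'$ and $|V'|$ is barely above $\sqrt{2m/(1-1/r)}$" with the \emph{right} constant $1+\varepsilon$ rather than some large constant. The crude truncation gives $|V'| = O_\varepsilon(\sqrt m)$ easily, but sharpening the leading constant requires simultaneously controlling (i) the $\ell_1$-mass $\|\bm{x}\|_1$ restricted to $V'$ (which must be $\approx \|\bm{x}\|_1$, else the discarded mass creates MS-slack), (ii) the $\ell_2$-mass on $V'$ (which must be $\approx 1$), and (iii) the relation $|V'| \ge \|\bm{x}|_{V'}\|_1^2 / \|\bm{x}|_{V'}\|_\infty^2$ versus $|V'|^2 \ge \dots$, balancing Cauchy--Schwarz in both directions. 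I would organise this as: first prove $\|\bm{x} - \bm{x}|_{V'}\|_2^2 = O(\eta^2) \cdot(\text{something})$ and $\|\bm{x}|_{V'}\|_1 \ge (1-o(1))\|\bm{x}\|_1$; then $\lambda \le (1-\frac1r)\|\bm{x}|_{V'}\|_1^2 + o(\lambda)$ combined with $\|\bm{x}|_{V'}\|_1 \le \sqrt{|V'|}\,\|\bm{x}|_{V'}\|_2 \le \sqrt{|V'|}$ and $\lambda \ge (1-O(\delta))\sqrt{2m(1-\frac1r)}$ yields $|V'| \ge (1-o(1))\sqrt{2m/(1-\frac1r)}$ — wait, I need the upper bound, so instead I use $|V'|\cdot(\eta m^{-1/4})^2 \le \sum_{v\in V'}x_v^2 \le 1$, giving $|V'|\le \eta^{-2}\sqrt m$, and then improve by noting that if $|V'|$ exceeded $(1+\varepsilon)\sqrt{2m/(1-\frac1r)}$ then $e(G') \le \binom{|V'|}{2}$ forces, together with $\lambda(G')\le\sqrt{(1-\frac1r)2e(G')}$, a contradiction with $\lambda(G') \ge \lambda(G) - o(\sqrt m) \ge (1-o(1))\sqrt{2m(1-\frac1r)}$ — this is clean once $d(G,G')=o(m)$ is established. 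So the real work, and the step most likely to be delicate, is proving $d(G,G') \le \varepsilon m$, i.e. that only $o(m)$ edges touch the low-weight set; I would prove this first, then everything else is bookkeeping.
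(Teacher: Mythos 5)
Your overall strategy --- exploit slackness in Motzkin--Straus, truncate the Perron--Frobenius vector, and show that few edges touch the low-weight set --- is the right one, but there is a genuine gap at exactly the step you flag as delicate: obtaining the \emph{sharp} bound $|V(G')| \le (1+\varepsilon)\sqrt{2m/(1-1/r)}$. Your truncation at level $\eta m^{-1/4}$ for a small constant $\eta=\eta(\varepsilon)$ only yields $|V'|\le \eta^{-2}\sqrt m$, and your proposed mechanism for improving the constant is logically backwards: from $d(G,G')\le\varepsilon m$ you get $e(G')\ge(1-\varepsilon)m$ and hence $\lambda(G')\ge(1-o(1))\sqrt{(1-1/r)2m}$, but the constraints $e(G')\le\binom{|V'|}{2}$ and $\lambda(G')\le\sqrt{(1-1/r)2e(G')}$ (or Wilf's $\lambda(G')\le(1-1/r)|V'|$) become \emph{easier} to satisfy as $|V'|$ grows, so no contradiction arises from $|V'|$ being too large; these inequalities can only ever give a \emph{lower} bound on $|V'|$, as you yourself notice mid-argument. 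The only way to upper-bound $|V'|$ by $(1+\varepsilon)\alpha^{-2}$ with $\alpha^2=\sqrt{(1-1/r)/(2m)}$ is to show that every surviving vertex has weight at least $(1-o(1))\alpha$ --- the truncation threshold must sit at the critical value $\alpha$ itself, not at $\eta\alpha$.

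That stronger eigenvector statement is where the real work lies, and your outline contains no mechanism for it. The paper obtains it by applying Motzkin--Straus to the \emph{squared} entries $(x_v^2)$ (which sum to $1$), yielding the second-moment inequality $\sum_{ij\in E}(x_ix_j-\alpha^2)^2\le\delta$; your $\ell_1$-normalised application gives only $\lambda\le(1-1/r)\lVert \bm{x}\rVert_1^2$, which does not localise the flatness, and ``near-equality in MS implies approximate structure'' is itself a stability assertion that cannot simply be invoked. From the second-moment inequality the paper declares an edge bad if $|x_ix_j-\alpha^2|>\delta^{1/4}\alpha^2$ (there are at most $O(\delta^{1/2}m)$ such edges) and then bootstraps over the sets $\Gamma_t=\{i:|x_i-\alpha|\ge 2\delta^{1/4}\alpha t\}$: good edges inside $\Gamma_1$ form a bipartite graph (one endpoint above $\alpha$, one below), so their spectral contribution is at most $\sqrt m\, s_{t-1}$ where $s_t=\sum_{i\in\Gamma_t}x_i^2$, and all edges from $\Gamma_t$ to the complement of $\Gamma_{t-1}$ are bad; since $r\ge3$ forces $\lambda\ge\tfrac{10}{9}\sqrt m$, this gives the contraction $s_t\le 0.9\,s_{t-1}+2\delta^{1/4}$ and hence $s_\ell=O(\delta^{1/4})$ after $\ell\approx\delta^{-1/8}$ steps. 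This iteration is what makes the threshold $(1-o(1))\alpha$ attainable, and it is also precisely where the hypothesis $r\ge3$ enters; neither ingredient appears in your proposal. (Your plan for $d(G,G')\le\varepsilon m$ is recoverable: once the second-moment inequality is in place, the edges meeting the exceptional set are controlled by Cauchy--Schwarz exactly as in the paper.)
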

\begin{proof}
    In the following argument, we can take $\delta = (0.1\varepsilon)^8$.  
    For simplicity, we write $V$ and $E$ for the set of vertices and the set of edges of $G$, respectively, and 
    let $\bm{x}=(x_i)_{i\in V}$ be the unit Perron--Frobenious eigenvector of $G$. 
    By Lemma \ref{lem-MS}, we have
    \begin{equation*}\label{eq:MS}
        \sum_{ij \in E} x_i^2x_j^2 \leq \frac{r - 1}{2r}.
    \end{equation*}
    On the other hand, the Rayleigh quotient gives 
    $$\sum_{ij \in E} 2x_i x_j = \lambda(G) \geq \sqrt{\Big(\frac{r-1}{r} - \delta \Big) 2m}.$$
    Setting $\alpha :=(\frac{r-1}{r}\cdot \frac{1}{2m} )^{1/4}$, a linear combination of the above inequalities yields that
    \begin{equation} \label{eq-expect}
    \sum_{ij \in E} \left(x_i x_j - \alpha^2\right)^2 \leq \frac{r - 1}{r} - \sqrt{\frac{r-1}{r} \cdot \Big(\frac{r - 1}{r}-\delta\Big)} < \delta.
    \end{equation}
    We say that an edge $ij \in E$ is \emph{bad} if $\abs{x_i x_j - \alpha^2} > \delta^{1/4} \alpha^2$, and \emph{good} otherwise. Let $G_b$ be the subgraph of bad edges, and $G_g$ be the subgraph of good edges. 
    For simplicity, we write $E_b$ and $E_g$ for the edge sets of $G_b$ and $G_g$, respectively. 
    From (\ref{eq-expect}), we have 
    \begin{equation} \label{upper-Eb}
         |E_b| \le \frac{\delta}{\delta^{1/2}\alpha^4} \leq 4\delta^{1/2} m.
    \end{equation}
    Using this fact, we now show that almost all vertices $i$ satisfy $x_i \approx \alpha$.

    For each positive integer $t \geq 1$, let 
    $$\Gamma_t:=\left\{i \in V: |x_i - \alpha| \geq 2\delta^{1/4} \alpha t \right\}.$$ 
    Observe that $\Gamma_t \subset \Gamma_{t - 1}$. We proceed via the following claims.

    The first one follows by the definition of $G_g$.
    
    \begin{claim}\label{cl:obs1}
    The graph $G_g[\Gamma_1]$ is a bipartite graph on the vertex parts 
   \[ \Gamma_1^+ := \{i \in V: x_i \geq (1 + 2\delta^{1/4}) \alpha\} \]
    and 
    \[ \Gamma_1^- := \{i \in V: x_i \leq (1 - 2\delta^{1/4}) \alpha\}. \]
    \end{claim}

    \begin{claim}\label{cl:obs2}
    For each $t \leq \delta^{-1/8} / 2$, all the edges between $\Gamma_t$ and $V \backslash \Gamma_{t - 1}$ are bad. 
    \end{claim}
    
    \begin{poc}
    Indeed, if $i \in \Gamma_t$ and $j \in V \backslash \Gamma_{t - 1}$, then
    \begin{align*}
    \abs{x_i x_j - \alpha^2} &= \abs{(x_i - \alpha)x_j + (x_j - \alpha)\alpha} \geq x_j\abs{x_i - \alpha} - \alpha 
    \abs{x_j - \alpha} \\
    &\geq (1 - 2(t - 1)\delta^{1/4}) \cdot 2t\delta^{1/4}\alpha^2  -
    2(t - 1)\delta^{1/4} \alpha^2 \\
    &> 2\delta^{1/4} \alpha^2 
    - 4t^2 \delta^{1/2} \alpha^2\geq \delta^{1/4} \alpha^2, 
    \end{align*}
    so the edge $ij$ is bad.
    \end{poc}

    We now consider the quantities 
    $$s_t := \sum_{i \in \Gamma_t} x_i^2.$$
    Our goal is to find an integer $\ell$ such that 
    $s_{\ell} \le O(\delta^{1/4})$, and then we will show that there are at most $\varepsilon m$ edges incident to the vertices of $\Gamma_{\ell}$ and that deleting all vertices of $\Gamma_{\ell}$ yields the desired subgraph $G'$.

    For every $t \le \delta^{-1/8} / 2$, as $\bm{x}$ is the Perron--Frobenius eigenvector, we have
    $$\lambda(G) s_t = \sum_{i \in \Gamma_t} \sum_{j \in N(i)} x_i x_j 
    =\sum_{i \in \Gamma_t} \sum_{\substack{j \in N(i)\\ j\in \Gamma_{t-1}}} x_ix_j + 
    \sum_{i \in \Gamma_t} \sum_{\substack{j \in N(i)\\ j\in V\setminus \Gamma_{t-1}}} x_ix_j.$$
    Note that $\Gamma_t \subset \Gamma_{t-1}$ for each $t\ge 1$. 
    By~\Cref{cl:obs2}, every edge summed over on the right hand side is either in $G_g[\Gamma_{t - 1}]$ or bad. Then 
    $$\lambda(G) s_t \leq \sum_{ij \in E_g[\Gamma_{t - 1}]} 2x_i x_j + \sum_{ij \in E_b} x_i x_j.$$
    By~\Cref{cl:obs1}, $G_g[\Gamma_{t - 1}]\subseteq G_g[\Gamma_{1}]$ is bipartite, so its spectral radius is at most $\sqrt{m}$. Thus, 
    $$\sum_{ij \in E_g[\Gamma_{t - 1}]} 2x_i x_j \leq \sqrt{m}\sum_{i \in \Gamma_{t - 1}} x_i^2 = \sqrt{m} s_{t - 1}.$$
    On the other hand, recall in (\ref{upper-Eb}) that  
    $|E_b|\le 4\delta^{1/2}m$. Then    
    $$\sum_{ij \in E_b} x_i x_j < \frac{1}{2} \sqrt{2 \abs{E_b}} < 2 \delta^{1/4} \sqrt{m}.$$
    Since $r\ge 3$ and $\lambda(G) \geq \frac{10}{9} \sqrt{m}$, we conclude that
    $$s_t \leq 0.9 s_{t - 1} + 2\delta^{1/4}.$$
    Together with the initial condition $s_1 \leq 1$, we can solve this recurrence and obtain that for every $t \leq \delta^{-1/8} / 2$, 
    $$s_t \leq 20 \delta^{1/4} + 0.9^{t - 1}.$$
    Taking $\ell := \floor{\delta^{-1/8}/2}$, since  $\delta$ is sufficiently small, we get $0.9^{\ell -1} < 10 \delta^{1/4}$. Thus, 
    \begin{equation} \label{eq-sl}
    s_{\ell} \leq 30 \delta^{1/4}.
    \end{equation}
    
    Let $V':=V(G)\setminus \Gamma_{\ell}$ and  $G':=G[V']$. 
    In what follows, we verify that $G'$ has the desired properties. 
    Firstly, by the definition of $V'$, we know that for every $i \in V'$,  
    $$x_i \geq (1 - 2\delta^{1/4} \ell) \alpha \geq (1 - 2\delta^{1/8}) \alpha.$$
    Recall that $\bm{x}$ is a unit vector and 
    $\alpha = (\frac{r-1}{r}\cdot \frac{1}{2m})^{1/4}$. 
    Thus, we get 
    $$
    |V'|   \leq \frac{1}{(1 - 2\delta^{1/8})^2 \alpha^2} 
    \sum_{i\in V'} x_i^2 \leq \frac{1}{(1 - 2\delta^{1/8})^2 \alpha^2} \leq (1 + \varepsilon) \cdot \sqrt{\frac{2rm}{r - 1}}.
$$    
Secondly, let $R$ be the set of edges with at least one vertex in $\Gamma_{\ell}$. Recall by~\eqref{eq-expect} that
    $$\sum_{ij \in R} \left(x_i x_j - \alpha^2\right)^2 \leq \sum_{ij \in E} \left(x_i x_j - \alpha^2\right)^2 \leq \delta.$$
    By the Cauchy--Schwarz inequality, we have
    $$ |R| \alpha^2 - \sum_{ij\in R} x_ix_j = 
    \sum_{ij \in R} (\alpha^2 - x_i x_j) \leq \sqrt{\delta \abs{R}}.$$
    On the other hand, we have
    $$\sum_{ij \in R} x_i x_j \leq \sum_{i\in \Gamma_{\ell}} x_i \sum_{j\in N(i)} x_j = 
    \lambda(G) \sum_{i \in \Gamma_{\ell}} x_i^2 < 30 \sqrt{2m} \delta^{1/4},$$
    where the last inequality holds by (\ref{eq-sl}). 
    So we conclude that
    $$\frac{1}{2\sqrt{m}}\abs{R} \leq \alpha^2 \abs{R} < 30 \sqrt{2m} \delta^{1/4} + \sqrt{\delta \abs{R}}.$$
    Solving this inequality, we obtain  $\abs{R} \leq 200\delta^{1/4} m < \varepsilon m$, which completes the proof.
\end{proof}

Now, we are ready to prove \Cref{thm-stability-clique-4} 
by combining Lemmas \ref{lem:nikiforov-stability-clique} with \ref{thm-niki-stability}. 

\begin{proof}[{\bf Proof of \Cref{thm-stability-clique-4}}] 
Let $\varepsilon>0$. Take $\varepsilon'>0$ sufficiently small so that $\varepsilon'<\varepsilon/2$ and $3(\varepsilon')^{1/4}<\delta_{\ref{thm-niki-stability}}(\varepsilon/8)$. Set $\delta=\delta_{\ref{lem:nikiforov-stability-clique}}(\varepsilon')$. Assume that $G$ is an $m$-edge $K_{r+1}$-free graph with $\lambda^2 (G)\ge {(1- \frac{1}{r} - \delta)2m}$. 
Applying Lemma \ref{lem:nikiforov-stability-clique} to $G$ with $\varepsilon_{\ref{lem:nikiforov-stability-clique}}=\varepsilon'$, we obtain a subgraph $G'$ on vertex set $V'$ with $|V'|\le (1+\varepsilon')\sqrt{2m/(1-1/r)}$ and $d(G,G')\le \varepsilon' m$.
Then we can apply the vertex-spectral stability, \Cref{thm-niki-stability}, to the $K_{r+1}$-free graph $G'$. 
Indeed, using the Rayleigh's formula, we have 
\[ \lambda (G')\ge \sqrt{\left(1- \frac{1}{r} - \delta \right)2m} - \sqrt{2\varepsilon' m} 
\ge \sqrt{\left(1- \frac{1}{r} - 3\sqrt{\varepsilon'} \right)2m}. \]
Then using the upper bound on $|V'|$, we can compute that 
\[ \lambda (G') \ge \left( 1-\frac{1}{r} - 3(\varepsilon')^{1/4}\right) |V'|. \]

We can choose $m$ sufficiently large such that $|V'|> \lambda (G') \ge \sqrt{m} \ge (n_0)_{\ref{thm-niki-stability}}$. 
Therefore, applying \Cref{thm-niki-stability} with $\varepsilon_{\ref{thm-niki-stability}}=\varepsilon/8$, 
we know that $G'$ differs from the $r$-partite Tur\'{a}n graph 
$T_{V',r}$ in at most 
$\varepsilon|V'|^2/8\le \varepsilon m/2$ edges. 
So it follows that 
$$d(G,T_{V',r}) \le d(G,G') + d(G',T_{V',r}) \le \varepsilon'm+\varepsilon m/2\le \varepsilon m,$$
as desired.
\end{proof}


\subsection{General graphs: Proving \Cref{thm:nikiforov-stability}}\label{sec:F-general}
In this subsection, we shall employ our framework in the proof of \Cref{conj-LLF} to complete the proof of \Cref{thm:nikiforov-stability} for general graphs $F$. 
First, we handle the case when $\chi (F)=r+1$ with $r \geq 3$. The main step is to find a large subgraph $G'$ retaining most of the edges such that its Perron--Frobenius eigenvector satisfies $x_i'x_j'\ge \delta m^{-1/2}$ for each $ij\in E(G')$. This in conjunction with Lemma \ref{lem:max} implies that $G'$ is dense: it has $O(\sqrt{m})$ vertices. We can then apply Lemma \ref{lemEFR} to $G'$ to obtain a $K_{r+1}$-free subgraph $G''$ by removing $o(|G'|^2)$ edges. This would complete the proof as $\lambda (G'')=(1-\frac{1}{r} -o(1))2m$ and we reduce the problem to clique case when $F = K_{r + 1}$.

We now implement this strategy in detail. To find the desired $G'$ mentioned above, we need the following lemma about the stability of the spectral radius after removing few edges. It eliminates some ``bad'' edges of $G$ that have small contribution to $\lambda(G)$.

\begin{lemma}
\label{lem:removing-bad-edges}
Let $F$ be a graph with $\chi (F)=r+1\ge 3$. For every $\varepsilon \in (0, 0.01)$, there exist $m_0$ depending only on $F$ and $\varepsilon$ such that the following holds. If $G$ is an $F$-free graph with $m\ge m_0$ edges and $\lambda^2 (G)\ge (1- \frac{1}{r} - \varepsilon) 2m$, then $G$ contains a subgraph $G'$ with  $m'\ge (1 - 4\eps) m$ edges such that the unit Perron--Frobenius eigenvector $\bm{x}'$ of $G'$ satisfies that for every  $ij \in E(G')$, 
$$x'_i x'_j \geq \varepsilon m^{-1/2}.$$
\end{lemma}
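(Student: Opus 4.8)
The plan is to iteratively discard edges of small Perron--Frobenius weight and show the process terminates quickly, yielding a subgraph whose spectral radius is still large enough to force every surviving edge to have weight at least $\varepsilon m^{-1/2}$. Concretely, I would set up a cleaning procedure: starting from $G_0 = G$, as long as the current graph $G_t$ (with $m_t$ edges and unit Perron--Frobenius eigenvector $\bm{x}^{(t)}$) contains an edge $ij$ with $x^{(t)}_i x^{(t)}_j < \varepsilon m^{-1/2}$, delete that edge to form $G_{t+1}$. The key quantitative observation is that each such deletion decreases the spectral radius by at most $2 x^{(t)}_i x^{(t)}_j < 2\varepsilon m^{-1/2}$, by the Rayleigh bound $\lambda(G_{t+1}) \geq \lambda(G_t) - 2x^{(t)}_i x^{(t)}_j$. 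So after removing $s$ edges in total, $\lambda(G_s) \geq \lambda(G) - 2s\varepsilon m^{-1/2}$.

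Next I would bound how long the process can run. Suppose it removed $s \geq 4\varepsilon m$ edges. Then the intermediate graph $G'$ obtained right after removing exactly $\lceil 4\varepsilon m\rceil$ edges still satisfies, using the estimate above,
\[
\lambda^2(G') \geq \left(\lambda(G) - 8\varepsilon \sqrt{m}\cdot m^{-1/2}\right)^2 = \left(\lambda(G) - 8\varepsilon\right)^2 \geq \lambda^2(G) - 16\varepsilon\,\lambda(G) \geq \left(1 - \tfrac{1}{r} - \varepsilon\right)2m - 16\varepsilon\sqrt{2m},
\]
which for $m \geq m_0$ large is still at least, say, $\left(1 - \tfrac1r - 2\varepsilon\right)2m > m$ (here I use $r \geq 2$ and $\varepsilon < 0.01$, so $1 - \tfrac1r - 2\varepsilon \geq \tfrac12 - 0.02 > \tfrac14$, giving $\lambda^2(G') > \tfrac{m}{2} \cdot \ldots$ — I would pin down the constant so that the hypothesis of the next step is met). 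The point is that $G'$ is a graph on $m' \leq m$ edges with $\lambda^2(G') \geq (1+\delta) m'$ for some fixed $\delta > 0$ (since $m' \leq m$ and $\lambda^2(G')$ is a fixed positive fraction of $2m$ bounded away from $m$). Now apply Lemma~\ref{lem:max} to $G'$: every coordinate of $\bm{x}'$ is $O(m^{-1/4})$, hence for any edge $ij \in E(G')$ we have $x'_i x'_j = O(m^{-1/2})$ — but wait, that goes the wrong direction; the real conclusion I want is the lower bound on edge weights, which comes from Lemma~\ref{lem-product-entry}-style reasoning: if $G'$ is the graph obtained when the process \emph{stops}, then by definition no remaining edge has weight below $\varepsilon m^{-1/2}$, which is exactly the claimed property. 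So the only thing to verify is that the process stops before removing $4\varepsilon m$ edges. For that I argue by contradiction: if it did not, the estimate above shows $G'$ (taken after exactly $4\varepsilon m$ deletions) still has $\lambda^2(G') \geq (1 - \tfrac1r - 2\varepsilon)2m$, and in particular $\lambda(G') > \sqrt{m}$; but $G'$ has at most $m$ edges, so $\lambda(G') < \sqrt{2m}$, which is consistent — so contradiction alone is not enough, and I instead need a potential-function argument.

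The cleaner route, and the one I would actually take: observe that when the process stops, at stage $s$, we have the surviving graph $G' = G_s$ with $m' = m - s$ edges and $\lambda(G_s) \geq \lambda(G) - 2s\varepsilon m^{-1/2}$. If $s \leq 4\varepsilon m$ we are done immediately (the stopping condition gives the weight bound, and $m' \geq (1-4\varepsilon)m$). So assume $s > 4\varepsilon m$; then in particular at the moment the $(4\varepsilon m)$-th edge is removed, the graph $G''$ at that stage has $\lambda(G'') \geq \lambda(G) - 8\varepsilon \geq \sqrt{(1-\tfrac1r-\varepsilon)2m} - 8\varepsilon$, so $\lambda^2(G'') \geq (1 - \tfrac1r - 2\varepsilon)2m \geq (1-\tfrac1r - 2\varepsilon)\cdot 2 m''$ where $m'' = m - 4\varepsilon m \leq m$; since $1 - \tfrac1r - 2\varepsilon > \tfrac12$ for $r\geq 3$ — here is where $r \geq 3$ is essential (for $r = 2$ one only gets $\tfrac12$ and Lemma~\ref{lem:max} needs $\lambda^2 \geq (1+\delta)m$, which fails!), so either I restrict to $r \geq 3$ and handle $r=2$ separately as the paper's roadmap indicates via Lemma~\ref{lem:max-2}, or I note that the statement as phrased only needs the final $G'$ to satisfy the weight bound and $m' \geq (1-4\varepsilon)m$, which is automatic if $s \leq 4\varepsilon m$. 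The main obstacle, therefore, is showing $s \leq 4\varepsilon m$: I expect this to require running Lemma~\ref{lem:max} on the stopped graph to get that it has $O(\sqrt m)$ vertices, hence at most $O(m)$ edges — which is vacuous — so instead I'd show that if too many edges were removed, $\lambda$ would have dropped below what Nikiforov's bound~\eqref{eq-Niki-2002-cpc} permits for an $F$-free (hence nearly $K_{r+1}$-free after applying Lemma~\ref{lemEFR}) graph on $m'$ edges, contradicting the lower bound $\lambda(G_s) \geq \lambda(G) - 2s\varepsilon m^{-1/2}$ being too large. Pinning down this contradiction — balancing the $\varepsilon$'s so that $4\varepsilon m$ deletions cannot already violate the $F$-free spectral ceiling — is the crux; the rest is bookkeeping.
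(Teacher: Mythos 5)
Your cleaning procedure and the Rayleigh estimate $\lambda(G_{t+1})\ge\lambda(G_t)-2x^{(t)}_ix^{(t)}_j$ are exactly the paper's starting point, and your final sentence names the right contradiction: if $\lceil 4\varepsilon m\rceil$ deletions were possible, the resulting graph would be an $F$-free graph on $m'\le(1-4\varepsilon)m$ edges whose spectral radius is too large. But you explicitly stop there (``pinning down this contradiction \dots is the crux''), and that crux is the entire content of the lemma. The missing computation is: after $\lceil 4\varepsilon m\rceil$ deletions, $\lambda(G')\ge\lambda(G)-2\varepsilon m^{-1/2}\cdot\lceil 4\varepsilon m\rceil\ge\lambda(G)-9\varepsilon^2 m^{1/2}$ (note your ``$\lambda(G)-8\varepsilon$'' is a slip --- $2\cdot 4\varepsilon m\cdot\varepsilon m^{-1/2}=8\varepsilon^2 m^{1/2}$, not $8\varepsilon$), hence $\lambda^2(G')>\bigl(1-\tfrac1r-\varepsilon\bigr)2m-36\varepsilon^2m$, and one must check this exceeds $\bigl(1-\tfrac1r+\tfrac{\varepsilon}{4}\bigr)2m'$ with $m'\le(1-4\varepsilon)m$. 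The balancing works precisely because deleting $4\varepsilon m$ edges lowers the Erd\H{o}s--Stone--Simonovits ceiling by about $(1-\tfrac1r)\cdot 8\varepsilon m\ge 4\varepsilon m$, which beats the hypothesis deficit $2\varepsilon m$ plus the $O(\varepsilon^2 m)$ loss from the deletions; this is where the constant $4$, the bound $1-\tfrac1r\ge\tfrac12$, and the restriction $\varepsilon<0.01$ all enter. The contradiction is then with \Cref{conj-LLF} applied directly to the $F$-free graph $G'$ (already proved at this stage), so there is no need to re-run \Cref{lemEFR} or Nikiforov's clique bound here.

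Two further points. First, the detour through \Cref{lem:max} and the worry about $r=2$ are red herrings for this lemma: \Cref{lem:max} plays no role in its proof, and the argument above is uniform in $r\ge 2$; the $r=2$ case only needs separate treatment later, in the proof of \Cref{thm:nikiforov-stability}(a). Second, your observation that the trivial bound $\lambda(G')<\sqrt{2m'}$ yields no contradiction is correct and is exactly why the quantitative $F$-free ceiling on $m'$ edges, rather than any a priori bound, must be invoked --- so the step you deferred cannot be routed around.
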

\begin{proof}
    Let $\bm{x}=(x_i)_{i\in V}$ be the unit Perron--Frobenius eigenvector of $G$. 
    Suppose that $e = ij$ is an edge of $G$, then the Rayleigh quotient implies 
    $$\lambda(G \backslash e) \geq \lambda(G) - 2x_i x_j.$$ 
    We start with $G' = G$, and repeatedly delete edges $ij$ in $G'$ with $x'_i x'_j < \varepsilon m^{-1/2}$ from $G'$. Suppose on the contrary that we can repeat this deletion for $\ceil{4\varepsilon m}$ steps. Then the resulting graph $G'$ has $m' \leq (1 - 4\varepsilon)m$ edges, and satisfies
    $$\lambda(G') \geq \lambda(G) - 2\varepsilon m^{-1/2} \cdot \ceil{4\varepsilon m} \geq \lambda(G) - 9\varepsilon^2 m^{1/2}.$$
    Squaring the above inequality, we have 
    $$\lambda^2(G') > \lambda^2(G) - 18\varepsilon^2 m^{1/2} \lambda(G) > 
    \left(1- \frac{1}{r} - \varepsilon \right) 2m - 36 \varepsilon^2 m > 
    \left(1- \frac{1}{r} + \frac{\varepsilon}{4} \right) 2m'.$$
    However, $G'$ is $F$-free. For sufficiently large $m$, this contradicts with \Cref{conj-LLF}.
\end{proof} 

Now, we are ready to prove the edge-spectral stability in  \Cref{thm:nikiforov-stability} when $r\ge 3$. 

\begin{proof}[{\bf Proof of part (b) of \Cref{thm:nikiforov-stability}}]
    Let $\delta_0$ be the value of $\delta$ for \Cref{thm-stability-clique-4} with $\varepsilon' = \varepsilon / 3$. Without loss of generality, we may assume that $\delta_0 < \min\{10^{-10},  \varepsilon/10\}$. We take $\delta = \delta_0^3$. Let $G$ be an $F$-free graph with $m\ge m_0$ edges and $\lambda^2 (G)\ge (1- \frac{1}{r} - \delta) 2m$.
    
    We shall throw away few edges of $G$ to yield a large $K_{r+1}$-free subgraph. Applying \Cref{lem:removing-bad-edges} to $G$, we find a subgraph $G'$ of $G$ with  $m'\ge (1 - 4\delta) m$ edges such that the Perron--Frobenius eigenvector $\bm{x}'=(x_i')_{i\in V(G')}$  satisfies $x'_i x'_j \geq \delta m^{-1/2}$ for every edge $ij$ of $G'$. Removing all the isolated vertices from $G'$, let $V'$ be the vertex set of $G'$. By \Cref{lem:remove-edges}, we have
    $$\lambda^2(G') > \lambda^2(G) - 4\sqrt{4\delta} m > \left(1- \frac{1}{r} - 5\sqrt{\delta} \right) 2m.$$
    Since $r \geq 3$, we have $\lambda^2(G')  \geq 1.3 m$. By Lemma \ref{lem:max}, we have 
    \[  \max\{x'_i:i \in V(G') \} \leq 124 (m')^{-1/4} \leq 125 m^{-1/4}. \] 
     Thus, we have 
     \[ \min\{x'_i :i \in V(G')\} \ge 125^{-1}\delta m^{-1/4} . \] 
     Note that $1=\sum_{i\in V'} x_i'^2 
     \ge |V'| 125^{-2}\delta^2m^{-1/2}$, 
     which implies $\abs{V'} \le 125^2 \delta^{-2} m^{1/2}$.

    Applying Lemma \ref{lemEFR}, 
    there exists an $m_0$ such that 
    for $m \ge m_0$, we can remove at most $125^{-4}\delta^5 \cdot |V'|^2 \le \delta m$ edges from $G'$ to form a new subgraph $G''$ that is $K_{r + 1}$-free. By \Cref{lem:remove-edges}, we have
    $$\lambda^2(G'') > \lambda^2(G) - 4\sqrt{5\delta} m > \left(1- \frac{1}{r} - 5 \sqrt{\delta} \right) 2m > \left(1- \frac{1}{r} - \delta_0 \right) 2m.$$
    Applying \Cref{thm-stability-clique-4}, there exists a vertex set $C \subseteq V(G'')$ such that
    $$d(G'', T_{C, r}) \leq \varepsilon m / 3.$$
    Thus we obtain
    \begin{align*}
    d(G, T_{C, r}) &\leq  d(G, G')+ d(G',G'')  +d(G'', T_{C, r}) \\
    &\leq 4 \delta m + \delta m +  \varepsilon m / 3  \\
    & \leq \varepsilon m , 
    \end{align*}
    as desired.
\end{proof}

For the case $r = 2$,  we cannot apply Lemma \ref{lem:max} to conclude that all the coordinates $(x_v)_{v\in V}$ are bounded by $O(m^{-1/4})$. To overcome this obstacle, we use instead the following lemma.

\begin{lemma} \label{lem:max-2}
For every $\varepsilon > 0$, there exists $\delta > 0$ such that the following holds. Let $G$ be a graph with $m$ edges, and let $\bm{x}$ be the unit Perron--Frobenius eigenvector of $G$. If $\lambda^2 (G) \ge (1 - \delta)m$ and 
\begin{equation} \label{eq-xv-large}
\max\{x_i : i\in V(G)\} > {\delta}^{-1} m^{-1/4}, 
\end{equation}
then there are disjoint vertex subsets $U, V$ of $G$ such that $d(G, K_{U, V}) \leq \varepsilon m$.
\end{lemma}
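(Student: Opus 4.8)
The plan is to pin down the dominant vertex $u$ of $G$ — the one with $x_u=\max_i x_i$ — to cut out of $G$ a bipartite subgraph $G''$ living on the parts $N(u)$ and $\{u\}\cup\bigl(V(G)\setminus(N(u)\cup\{u\})\bigr)$ that still carries almost all of the edges of $G$, and then to feed $G''$ to \Cref{lem:close-to-Kab}. Throughout write $\mu:=x_u>\delta^{-1}m^{-1/4}$, $\lambda:=\lambda(G)$ (so $\lambda^2\ge(1-\delta)m$), $B:=N(u)$, and $W:=V(G)\setminus(B\cup\{u\})$.

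Two elementary observations power the argument. First, since $u\in N(v)$ for every $v\in B$, the eigen-equation gives $\lambda x_v\ge x_u=\mu$, i.e.\ $x_v\ge\mu/\lambda$ for all $v\in B$; this lower bound is the main leverage. Second, running the eigen-equation twice at $u$ as in the proof of \Cref{lem-bipartite} and bounding every coordinate by $\mu$ gives $\lambda^2\le d(u)+2e(B)+e(B,W)=m-e(W)+e(B)$, hence
$$e(W)\ \le\ (m-\lambda^2)+e(B)\ \le\ \delta m+e(B).$$
So the whole problem reduces to proving $e(B)\le\eta(\delta)\,m$ for some $\eta(\delta)\to0$ as $\delta\to0$. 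Granting this: delete the at most $e(B)+e(W)\le(\delta+2\eta)m$ edges inside $B$ and inside $W$ to obtain the bipartite graph $G''$ on parts $B$ and $\{u\}\cup W$ with $e(G'')\ge(1-\delta-2\eta)m$; by \Cref{lem:remove-edges}, $\lambda(G'')\ge\lambda-\sqrt{2(\delta+2\eta)m}\ge\bigl(1-O(\sqrt{\delta+\eta})\bigr)\sqrt m\ge\bigl(1-O(\sqrt{\delta+\eta})\bigr)\sqrt{e(G'')}$; choosing $\delta$ (hence $\eta$) small in terms of $\varepsilon$, and assuming $\varepsilon<0.02$ without loss of generality, \Cref{lem:close-to-Kab} with parameter $\varepsilon/2$ yields disjoint $U,V$ with $d(G'',K_{U,V})\le(\varepsilon/2)e(G'')\le(\varepsilon/2)m$, so $d(G,K_{U,V})\le d(G,G'')+d(G'',K_{U,V})\le(\delta+2\eta)m+(\varepsilon/2)m\le\varepsilon m$, as wanted.

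The crux — and the step I expect to fight with — is the bound $e(B)\le\eta(\delta)m$. Morally, an edge inside $B=N(u)$ closes a triangle with $u$, and since $x_u$ is large while the $B$-weights sit in $[\mu/\lambda,\mu]$, such an edge contributes only $\approx\mu^2/\lambda$ to the Rayleigh sum $\lambda=2\sum_{ij\in E}x_ix_j$ yet still costs a full unit of $m$; so $\Omega(m)$ of them would drive $\lambda^2$ below $(1-\delta)m$. To make this precise I would split on the size of $\mu$. If $\mu^2\ge\tfrac12$, then $u$ is essentially dominating: from $\sum_{v\in B}x_v^2\le 1-\mu^2\le\mu^2$ and $\lambda^2\mu^2=\bigl(\sum_{v\in B}x_v\bigr)^2\le d(u)\sum_{v\in B}x_v^2$ one reads off $d(u)\ge\lambda^2\ge(1-\delta)m$, so $e(B)\le m-d(u)\le\delta m$. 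If $\mu^2<\tfrac12$, I would combine the pointwise bound $d_B(v)\le\frac{\lambda^2}{\mu}x_v-\lambda$ for $v\in B$ — immediate from $\lambda x_v\ge\mu+d_B(v)\cdot\mu/\lambda$, and already forcing the low-weight vertices of $B$ to have almost no neighbour inside $B$ — summed using $\sum_{v\in B}x_v=\lambda\mu$, with the identities $\lambda\sum_{v\in B}x_v^2=\lambda\mu^2+2P+Q$ and $\lambda(1-2\mu^2)=2(P+Q+R)$ (where $P,Q,R$ are the total Perron weight $\sum x_ix_j$ of the edges inside $B$, between $B$ and $W$, and inside $W$), together with $\sum_{v\in B}x_v^2\le1$ and $\lambda^2\ge(1-\delta)m$, to force first $P$ and then $e(B)$ to be $o(m)$; the sub-case $\mu^2$ near $\tfrac12$ is again easy via $P\le P+Q+R=\tfrac\lambda2(1-2\mu^2)$, leaving the genuinely delicate regime where $\mu$ is of intermediate order, roughly between $\delta^{-1}m^{-1/4}$ and a small absolute constant.

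In short, the reduction to $e(B)\le\eta(\delta)m$, the passage to $G''$, and the final call to \Cref{lem:close-to-Kab} are routine; the main obstacle is the bound on $e(B)$ in the intermediate-$\mu$ regime, which is also the only place where both hypotheses are used together — $x_u$ being large rules out the balanced complete bipartite graph (in which $x_u$ is of order $m^{-1/4}$ but $N(u)$ has no internal edges), and $\lambda^2$ being near $m$ is precisely what forbids packing many triangles through $u$.
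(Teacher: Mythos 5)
Your reduction to the claim $e(N(u))\le\eta(\delta)m$ is where the argument breaks, and unfortunately that claim is false, so the choice of bipartition $(N(u),\{u\}\cup W)$ cannot work. Take $G=K_1\vee K_{a,b}$ with $a=\lfloor \delta^2\sqrt m\rfloor$ and $b$ chosen so that $e(G)=m$ (so $b\approx \sqrt m/\delta^2$), where the apex $u$ is joined to all of $A\cup B$. Then $\lambda^2(G)\ge ab=m-a-b\ge(1-\delta)m$ for large $m$. Writing $y,w,z$ for the Perron weights on $u$, $A$, $B$, the eigen-equations give $(\lambda+a)w=(\lambda+1)y$, so $u$ is indeed the maximum-weight vertex, and $y\approx w\approx(2a)^{-1/2}\approx \delta^{-1}m^{-1/4}\cdot 2^{-1/2}\delta^{-1}$ (in particular $x_u>\delta^{-1}m^{-1/4}$ once $a<\delta^2\sqrt m/2$, which you can arrange). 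Both hypotheses of the lemma hold, yet $N(u)=A\cup B$ and $e(N(u))=ab\ge(1-o(1))m$: essentially \emph{every} edge of $G$ lies inside $N(u)$, and your proposed bipartite subgraph $G''$ on $(N(u),\{u\}\cup W)$ is just the star at $u$, which carries only $a+b=o(m)$ edges. This sits exactly in the "intermediate $\mu$" regime you flagged as unresolved; the issue is not that the estimate there is delicate but that the maximum-weight vertex need not lie on the small side of the near-complete-bipartite structure --- it can be a hub seeing both sides.

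The fix, which is what the paper does, is to bipartition by a weight threshold rather than by adjacency to $u$: set $R:=\{j: x_j>\delta^{-1/2}m^{-1/4}\}$, so $|R|<\delta\sqrt m$ by normalization. Running the eigen-equation twice at $u$ as you do, one bounds the contribution of edges inside $R$ by $|R|^2x_u\le\delta^2mx_u$ and the contribution of edges inside $\bar R$ by $2\delta^{-1/2}m^{3/4}\le 2\delta^{1/2}mx_u$ (this is the only place \eqref{eq-xv-large} is used), forcing $e(R,\bar R)\ge(1-5\delta^{1/2})m$. One then applies \Cref{lem:close-to-Kab} to $G[R,\bar R]$ exactly as in the last step of your outline. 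In the example above this threshold correctly puts $\{u\}\cup A$ on one side and $B$ on the other, recovering essentially all of $K_{A,B}$.
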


\begin{proof}
Using (\ref{eq-xv-large}), we shall show that $G$ contains a bipartite subgraph $G'$ with large spectral radius. Then we can invoke  \Cref{lem:close-to-Kab} to complete the proof. 
We set $\delta := (\delta_0/5)^8$,  
where $\delta_0=(\varepsilon/2)^4/100$ is the constant from \Cref{lem:close-to-Kab}. 
Let $\lambda = \lambda (G)$ and 
$x_i= \max\{x_v : v\in V(G)\}$. 
Since $\lambda x_i = \sum_{j\in N(i)} x_j$, we have 
\begin{equation}\label{eq-eigen}
  \lambda^2 x_i = \sum_{j\in N(i)} \lambda x_j= \sum_{j\in N(i)} 
\sum_{k\in N(j)} x_k \le \sum_{jk \in E(G)}(x_j +x_k). 
\end{equation}
Let 
$$R := \{ j\in V(G): x_j > \delta^{-1/2}m^{-1/4}\}.$$ 
Since 
$\sum_{v\in V} x_v^2 =1$, we see that 
\[ \abs{R} <  \delta m^{1/2}. \] 
We obtain from (\ref{eq-eigen}) that 
$$ (1-\delta)mx_i\le \lambda^2 x_i \leq \sum_{jk \in E(R)}(x_j +x_k) +  \sum_{ jk \in E(\bar{R})}(x_j +x_k) +  \sum_{jk \in E(R, \bar{R})}(x_j +x_k).$$
The first term is at most $\abs{R}^2 x_i \leq \delta^2 m x_i$. By the definition of $R$ and that $x_i > \delta^{-1}m^{-1/4}$ due to \eqref{eq-xv-large}, the second term above is at most $m \cdot 2\delta^{-1/2}m^{-1/4} \leq 2\delta^{1/2} m x_i$. Thus we have
$$\sum_{jk \in E(R, \bar{R})}(x_j +x_k) \geq (1 - 4\delta^{1/2}) m x_i.$$
For each edge $jk \in E(R, \bar{R})$, we have $x_j + x_k \leq x_i + \delta^{-1/2} m^{-1/4} \le 
(1 + \delta^{1/2})x_i$. Then 
\[ e(R,\bar{R})\cdot  (1+ \delta^{1/2})x_i 
\ge (1- 4\delta^{1/2} ) mx_i. \]
We conclude that at least $(1 - 5\delta^{1/2})m$ edges lies between $R$ and $\bar{R}$. Now, we denote $G'=G[R,\bar{R}]$. 
Then \Cref{lem:close-to-Kab} can be invoked on $G'$ to complete the proof. 
Indeed, Rayleigh's formula yields 
\[ \lambda (G') \ge \lambda (G) - 
\lambda (G\setminus G') 
\ge \sqrt{(1-\delta) m} - \sqrt{10\delta^{1/2}m} 
> (1- 5\delta^{1/8}) \sqrt{m}. \]
Note that $G'$ is bipartite. 
By \Cref{lem:close-to-Kab}, 
there exist two disjoint vertex subsets $U,V$ such that $d(G',K_{U,V})\le \varepsilon m/2$. 
Thus, we have 
\[ d(G,K_{U,V}) \le d(G,G') + d(G',K_{U,V}) \le 5\delta^{1/2}m + \varepsilon m/2 \le \varepsilon m. \]
This completes the proof.
\end{proof}

We now finish the proof of \Cref{thm:nikiforov-stability} 
for the case $\chi (F)=r+1=3$.

\begin{proof}[{\bf Proof of part (a) of \Cref{thm:nikiforov-stability}}]
    Let $\delta_0$ be the value of $\delta$ from \Cref{thm-stability-triangle} with $\varepsilon' = \varepsilon / 3$, and let $\delta_1$ be the value of $\delta$ from \Cref{lem:max-2} with $\varepsilon' = \varepsilon / 2$. Without loss of generality, we may assume that $\delta_0 < \min\{10^{-10}, \varepsilon/10 \}$. We take $\delta = \min\{\delta_0^3, \delta_1^3\}$. Let $G$ be an $F$-free graph with $m\ge m_0$ edges and $\lambda^2 (G)\ge (1 - 2\delta) m$. 

    Applying \Cref{lem:removing-bad-edges} to $G$, we find a subgraph $G'$ of $G$ with at least $(1 - 4 \delta) m$ edges such that the Perron--Frobenius eigenvector $\bm{x}'=(x_i')_{i\in V(G')}$ satisfies $x'_i x'_j \geq \delta m^{-1/2}$ for every  $ij\in E(G')$. Removing all isolated vertices from $G'$, let $V'$ be the vertex set of $G'$. By \Cref{lem:remove-edges}, we have
    $$\lambda^2(G') > \lambda^2(G) - 4\sqrt{4\delta} m > (1- 10\sqrt{\delta}) m.$$
    We now divide the remaining proof into two cases.

\smallskip
    
    \noindent\textbf{Case 1.} $\max\{x_v' : v\in V(G')\} > \delta_1^{-1} m^{-1/4} $. 
    
    Using \Cref{lem:max-2}, there exist disjoint vertex subsets $U, V$ such that      
    $d(G',K_{U, V}) \le \varepsilon m/2$. 
    Thus, it follows that 
    $d(G,K_{U, V}) \le d(G,G') + d(G' ,K_{U, V}) \le 4 \delta m + \varepsilon m/2 \le \varepsilon m$ as desired. 

\smallskip
    
    \noindent\textbf{Case 2.} $\max\{x_v' : v\in V(G')\} \leq \delta_1^{-1} m^{-1/4}$. 
    
    In this case, we can apply a similar argument as in the case $r \geq 3$. 
    First of all, we have 
    $\min\{x_v': v\in V(G')\} \ge \delta \delta_1 m^{-1/4}$. 
    Observe that $1=\sum_{i\in V'} (x_i')^2\ge |V'| \cdot (\delta \delta_1)^2 m^{-1/2}$, which gives 
     $|V'| \le (\delta \delta_1)^{-2} m^{1/2} $. 
    By Lemma \ref{lemEFR}, there exists $m_0$ such that for $m\ge m_0$, 
    we can remove at most 
    $(\delta \delta_1)^5|V'|^2 \le 
    \delta m$ edges from $G'$ to get a 
    triangle-free graph $G''$. By 
    \Cref{lem:remove-edges}, we get 
    \[  \lambda^2(G'') > \lambda^2(G) - 
    4\sqrt{5\delta} m > (1- 11\sqrt{\delta})m. \]
    Applying \Cref{thm-stability-triangle}, there exist disjoint vertex subsets $U, V$ such that      
    $d(G',K_{U, V}) \le \varepsilon m/3$. Thus, we get  
    $ d(G,K_{U, V}) \le d(G,G') + d(G',G'') + 
    d(G'',K_{U, V}) \le \varepsilon m$, as needed. 
\end{proof}

\section{Maximizing common neighbors}

\label{sec:common}

We first give a random construction to show that our bound on $t$ in~\Cref{thm-common} is the best possible. Let $\varepsilon>0$ and consider the binomial random graph $G(n, \frac{1}{2} + \varepsilon)$. 
Standard concentration bounds show that for sufficiently large $n$, with high probability both of the following events occur:
\begin{enumerate}
    \item $G$ has at least $(\frac{1}{2} + \frac{\varepsilon}{2}) \binom{n}{2}$ edges;
    \item the codegree of any two vertices is at most $(\frac{1}{2} + 2\varepsilon)^2 n$.
\end{enumerate}
Consider any $G$ such that both the above two properties hold. Then first one implies that $m > n^2 / 4$, and so $\lambda(G) \geq \frac{2m}{n} > \sqrt{m}$. The second implies that if $K_{2, t}$ is a subgraph of $G$, then $t \leq (\frac{1}{2} + 2\varepsilon)^2 n \leq \left(\frac{1}{2} + 6\varepsilon\right) \sqrt{m}$. This random construction reveals the tightness of \Cref{thm-common}. 

\medskip 
To prove~\Cref{thm-common}, we need the following lemma.  

\begin{lemma} \label{lem-lower-bound}
    Let $G=(V,E)$ be a graph with $m$ edges and 
    $\bm{x}$ be the unit Perron--Frobenius eigenvector of $G$. Then 
    $$\sum_{i,j\in V} |N(i)\cap N(j)|\cdot x_i^2 x_j^2 \geq \frac{\lambda^3(G)}{2m}.$$
\end{lemma}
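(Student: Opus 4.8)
The plan is to rewrite the left‑hand side, reduce it to a one–variable inequality about $\sum_i x_i^3$, and then prove that inequality by passing to the edge weights $x_ix_j$.

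First I would group the sum by the common neighbour: $\sum_{i,j\in V}|N(i)\cap N(j)|\,x_i^2x_j^2=\sum_{k\in V}\big(\sum_{i\in N(k)}x_i^2\big)^2=\lVert A_G\bm z\rVert^2$, where $\bm z$ is the vector with entries $z_i=x_i^2$. The first main step is the bound $\lVert A_G\bm z\rVert^2\ge\lambda^2(G)\big(\sum_i x_i^3\big)^2$, which is immediate from Cauchy--Schwarz together with $\lVert\bm x\rVert=1$ and the nonnegativity of all entries: $\lVert A_G\bm z\rVert^2=\lVert A_G\bm z\rVert^2\lVert\bm x\rVert^2\ge\langle A_G\bm z,\bm x\rangle^2=\big(\langle\bm z,A_G\bm x\rangle\big)^2=\big(\lambda(G)\langle\bm z,\bm x\rangle\big)^2=\lambda^2(G)\big(\sum_i x_i^3\big)^2$ (using that $A_G$ is symmetric and $A_G\bm x=\lambda(G)\bm x$). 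Hence it suffices to prove the scalar bound $\big(\sum_i x_i^3\big)^2\ge\lambda(G)/(2m)$.

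For the scalar bound I would move to a sum over edges. By the eigen-equation, $\lambda(G)\sum_i x_i^3=\sum_i x_i^2\,(\lambda(G)x_i)=\sum_i x_i^2\sum_{j\in N(i)}x_j=\sum_{ij\in E}x_ix_j(x_i+x_j)\ge 2\sum_{ij\in E}(x_ix_j)^{3/2}$, the last inequality being AM--GM ($x_i+x_j\ge 2\sqrt{x_ix_j}$). Applying the power-mean (equivalently, Hölder's) inequality to the $m$ nonnegative numbers $\{x_ix_j:ij\in E\}$ gives $\sum_{ij\in E}(x_ix_j)^{3/2}\ge m^{-1/2}\big(\sum_{ij\in E}x_ix_j\big)^{3/2}=m^{-1/2}\big(\tfrac12\lambda(G)\big)^{3/2}$, where I used $\sum_{ij\in E}x_ix_j=\tfrac12\lambda(G)$. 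Combining the two displays yields $\lambda(G)\sum_i x_i^3\ge 2m^{-1/2}\big(\tfrac12\lambda(G)\big)^{3/2}$, which rearranges to $\sum_i x_i^3\ge\sqrt{\lambda(G)/(2m)}$; squaring gives the scalar bound. Chaining this with the first step finishes the proof: $\sum_{i,j}|N(i)\cap N(j)|x_i^2x_j^2\ge\lambda^2(G)\cdot\frac{\lambda(G)}{2m}=\frac{\lambda^3(G)}{2m}$.

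The part I expect to be the real obstacle is the scalar inequality $\big(\sum_i x_i^3\big)^2\ge\lambda(G)/(2m)$, and in particular the idea of proving it through the edge weights $x_ix_j$ rather than through the vertex quantities directly. Applying Cauchy--Schwarz naively to the identity $\sum_{i,j}|N(i)\cap N(j)|x_ix_j=\lambda^2(G)$ only gives $\sum_{i,j}|N(i)\cap N(j)|x_i^2x_j^2\ge\lambda^4(G)/\sum_i d_i^2$, which is too weak: $\sum_i d_i^2$ can exceed $2m\,\lambda(G)$ (already for the path on three vertices), so that bound can be strictly smaller than $\lambda^3(G)/(2m)$. Both Cauchy--Schwarz applications in the argument above are equalities precisely when $G$ is regular, which matches the lemma being sharp for complete graphs.
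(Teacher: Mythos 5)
Your proof is correct, and it takes a genuinely different route from the paper's. You rewrite the left-hand side as $\lVert A_G\bm{z}\rVert^2$ with $\bm{z}=(x_i^2)_{i\in V}$, project onto the Perron vector to reduce everything to the scalar inequality $\bigl(\sum_i x_i^3\bigr)^2\ge \lambda(G)/(2m)$, and then prove that via AM--GM on each edge followed by the power-mean inequality over the $m$ edge weights $x_ix_j$; the degree sequence never enters, and the only graph data used are $\lambda(G)$ and $m$. The paper instead expands the sum through two-step neighbourhoods, applies Cauchy--Schwarz in the form $\sum_{j\in N(k)}x_j^2\ge \lambda^2 x_k^2/d_k$, symmetrizes to get $\sqrt{d_id_k}$ in the denominator, and then needs a second Cauchy--Schwarz together with the Rayleigh bound $\sum_{ik\in E}\sqrt{d_id_k}\le \lambda m$ applied to the vector $(\sqrt{d_i})_i$. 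Both arguments are tight exactly in the regular/biregular regime, consistent with sharpness for $K_{\sqrt{m},\sqrt{m}}$. Your version is arguably more streamlined (three standard inequalities chained through one clean intermediate bound $\sum_i x_i^3\ge\sqrt{\lambda(G)/(2m)}$, which may be of independent use), while the paper's keeps the degrees in play, which ties it more directly to other degree-based spectral estimates. Your closing remark about why the naive Cauchy--Schwarz on $\sum_{i,j}|N(i)\cap N(j)|x_ix_j=\lambda^2(G)$ fails (since $\sum_i d_i^2$ can exceed $2m\lambda(G)$) is also accurate.
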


The bound in Lemma \ref{lem-lower-bound} 
is the best possible when we take $G=K_{\sqrt{m},\sqrt{m}}$. 

\begin{proof}
Let $\lambda =\lambda (G)$ and $C_{i, j}$ be the number of common neighbors of $i$ and $j$. In particular, we have that  $C_{i, i} $ is the degree $ d_i$. 
We can write
$$\sum_{i, j\in V} C_{i, j} x_i^2 x_j^2 
= \sum_{i\in V} x_i^2 \sum_{k \in N(i)} \sum_{j \in N(k)} x_j^2.$$
By the Cauchy--Schwarz inequality, we have
$$\sum_{j \in N(k)} x_j^2 \geq \frac{1}{d_k} \left(\sum_{j \in N(k)} x_j\right)^2 = \frac{\lambda^2}{d_k} x_k^2.$$
Thus, we obtain 
$$\sum_{i, j\in V} C_{i, j} x_i^2 x_j^2 \geq 
\lambda^2 \sum_{i\in V} \sum_{k \in N(i)} \frac{x_i^2 x_k^2}{d_k} = 2\lambda^2 \sum_{ik \in E} \frac{x_i^2 x_k^2}{d_k}.$$
By symmetry, we have 
\[ \sum_{i, j\in V} C_{i, j} x_i^2 x_j^2 \geq 2\lambda^2 \sum_{ik \in E} \frac{x_i^2 x_k^2}{d_i}.\] 
Adding the above two inequalities and using the AM-GM inequality, we get 
\begin{equation} \label{eq-low}
    \sum_{i, j\in V} C_{i, j} x_i^2 x_j^2 \geq 2\lambda^2  \sum_{ik \in E} \frac{x_i^2 x_k^2}{\sqrt{d_i d_k}}.
\end{equation}
By the Cauchy--Schwarz inequality again, we have
\begin{equation*} 
    \sum_{ik \in E} \frac{x_i^2 x_k^2}{\sqrt{d_i d_k}} \geq \frac{\left(\sum_{i k \in E} x_i x_k\right)^2}{\sum_{ik \in E} \sqrt{d_i d_k}}.
\end{equation*}
On the one hand, it follows from the definition that $\sum_{ik \in E} x_i x_k = \lambda / 2.$
On the other hand, the vector $ (\sqrt{d_i})_{i=1}^n$ has length $\sqrt{2m}$. Therefore, the Rayleigh formula gives 
$$\sum_{ik \in E} \sqrt{d_i d_k} \leq 
\frac{\lambda}{2} \sum_{i=1}^n d_i =
\frac{\lambda}{2} \cdot 2m .$$
Thus, one more application of the Cauchy--Schwarz inequality yields
\begin{equation} \label{eq-bow}
   \sum_{ik \in E} \frac{x_i^2 x_k^2}{\sqrt{d_i d_k}} \geq \frac{\lambda}{4m}. 
\end{equation}
Combining  (\ref{eq-low}) 
with (\ref{eq-bow}), we conclude that $\sum_{i, j\in V} C_{i, j} x_i^2 x_j^2 \geq \frac{\lambda^3}{2m}$ as desired.
\end{proof}

Now, we are in a position to prove Theorem \ref{thm-common}. 

\subsection{Proof of Theorem \ref{thm-common}}

Let $C_{i,j}=|N(i)\cap N(j)|$. Since $\lambda (G)> \sqrt{m}$, \Cref{lem-lower-bound} gives 
    $$\sum_{i\in V} d_i x_i^4 + \sum_{i \neq j} C_{i, j} x_i^2 x_j^2 > \frac{\sqrt{m}}{2}.$$
    We denote $t := \max_{i \neq j} C_{i, j}$. Then 
    $$\sum_{i \neq j} C_{i, j} x_i^2 x_j^2 \leq t \sum_{i \neq j} x_i^2 x_j^2 \leq t.$$
    Note that 
    $$\sum_{i\in V} d_i x_i^4 \leq 2m \cdot (\max_{i\in V} x_i)^4$$
    Therefore, we conclude that 
    \begin{equation} \label{eq-t-max}
        t + 2m \cdot (\max_{i\in V} x_i)^4 > \frac{\sqrt{m}}{2}.
    \end{equation}
    
    Let $\varepsilon >0$ be sufficiently small and 
    $\delta $ be the number determined in 
    the spectral stability Lemma \ref{lem:max-2}. 
    If $\max_{i\in V} \{x_i\} \le \delta^{-1}m^{-1/4}$, 
    then (\ref{eq-t-max}) implies $t> \sqrt{m}/2 - 2\delta^{-4}$, as desired. 
    
    We then assume $\max_{i\in V} \{x_i\} > \delta^{-1} m^{-1/4} $, so that Lemma \ref{lem:max-2} is applicable. 
    There exist disjoint vertex subsets $A,B$ of $G$ such that 
    $d(G,K_{A,B})\le \varepsilon m$, which means that the total number of edges with two endpoints within $A$ or $B$, the missing edges between $A$ and $B$, and the edges with at least one endpoint outside $A\cup B$ are at most $\varepsilon m$. 
    Then $e(A)+e(B)\le \varepsilon m$ and 
    \[ (1- \varepsilon)m \le  |A||B|  \le (1+ \varepsilon )m. \]  
    We may assume that $|A|\le |B|$. 
    Then $|A| \le \sqrt{(1+\varepsilon)m}$. 

We claim that $ |A|\ge 2$. Suppose on the contrary that 
$|A|=1$. Then using Lemma \ref{lem-Delta}, we know that 
$ e(G)\le \Delta (G) + \varepsilon m \le m/2 + m^{0.99} + \varepsilon m <m$, which is a contradiction. 
 Thus, we must have $|A|\ge 2$. 
By double counting, we can show that 
    there are two vertices in $A$ that have at least ${(1- 10\varepsilon)m}/{|A|}$ common neighbors in $B$. 
Indeed, suppose on the contrary that 
for any $2$-set $\{i,j\} \subseteq A$, we have $C_{i,j}< {(1-10\varepsilon)m}/{|A|}$. Then 
\[ \sum_{\{i,j\} \subseteq A} C_{i,j} < {|A| \choose 2} \cdot \frac{(1-10\varepsilon)m}{|A|} .\]
On the other hand, the Jensen inequality gives  
\[ \sum_{\{i,j\}\subseteq A} C_{i,j} = \sum_{k\in B} {d_k \choose 2} \ge |B| {|B|^{-1} \sum_{k\in B}d_k \choose 2} .  \]
Combining these two bounds, we get 
${|A| \choose 2} \frac{(1-10\varepsilon)m}{|A|} > |B| {m/|B| \choose 2}$, which yields 
$(|A| -1) (1- 10\varepsilon) > \frac{m}{|B|}-1$. 
Since $|A| \ge 2$ and $|A||B| \le (1+\varepsilon)m$, 
by simplifying, we have $m-10(1+\varepsilon)m + 10 |B|>0$, which leads to a contradiction since $0<\varepsilon< 0.01$ and $|B|\le \frac{1+\varepsilon}{2}m$.  

Since $|A|\le \sqrt{(1+\varepsilon)m}$, we have $t\ge (1- 10\varepsilon)m/|A| \ge (1-5\sqrt{\varepsilon })\sqrt{m}$, as desired. This completes the proof.

\section{Concluding remarks}\label{sec:concluding}
In this paper, we obtained an edge-spectral extension of Erd\H{o}s--Stone--Simonovits theorem and its stability result. As an application of our edge-spectral stability result, we proved an optimal bound on the size of $K_{2,t}$ in $m$-edge graphs with spectral radius larger than $\sqrt{m}$. For graphs with spectral radius larger than $(1 + \varepsilon)\sqrt{m}$, our technique also finds, for any fixed $s$, a copy of $K_{s,t}$ with $t=\Omega(\sqrt{m})$.

\begin{theorem} \label{thm-squar-root}
For every $s\ge 2$ and $\varepsilon >0$, 
there exists $\delta = \delta (s,\varepsilon)>0$ such that 
if $G$ is an $m$-edge graph with 
$\lambda (G)\ge (1+ \varepsilon )\sqrt{m}$, then $G$ contains a copy 
of $K_{s,t}$ with $t\ge \delta \sqrt{m}$. 
\end{theorem}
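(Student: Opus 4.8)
The plan is to show that the spectral assumption alone already forces a \emph{dense} subgraph on only $O_{s,\varepsilon}(\sqrt m)$ vertices, and then to apply the classical K\H{o}v\'{a}ri--S\'{o}s--Tur\'{a}n bound. Throughout I assume $m$ is large enough in terms of $s$ and $\varepsilon$. Let $\bm{x}$ be the unit Perron--Frobenius eigenvector of $G$ and put $\delta_0:=\min\{(1+\varepsilon)^2-1,\tfrac34\}$. Since $\lambda^2(G)\ge(1+\varepsilon)^2m\ge(1+\delta_0)m$, \Cref{lem:max} applies and gives $\max_{v\in V(G)}x_v<Km^{-1/4}$ with $K:=\delta_0^{-4}$.

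Next I would discard the edges of negligible eigenvector weight. Set $\eta:=\varepsilon/4$, let $E_1:=\{ij\in E(G):x_ix_j\ge\eta m^{-1/2}\}$, let $V_1$ be the set of endpoints of the edges in $E_1$, and let $G_1:=(V_1,E_1)$, a subgraph of $G$. Since $\lambda(G)=2\sum_{ij\in E(G)}x_ix_j$ and the at most $m$ edges outside $E_1$ contribute in total less than $2\eta m^{-1/2}\cdot m=2\eta\sqrt m$, we get $2\sum_{ij\in E_1}x_ix_j\ge(1+\varepsilon)\sqrt m-2\eta\sqrt m=(1+\varepsilon/2)\sqrt m$. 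This has two consequences. First, every $ij\in E_1$ satisfies $x_i\ge\eta m^{-1/2}/(Km^{-1/4})=(\eta/K)m^{-1/4}$, and symmetrically for $x_j$, so each vertex of $V_1$ has weight at least $(\eta/K)m^{-1/4}$; as $\bm{x}$ is a unit vector this forces $|V_1|\le(K/\eta)^2\sqrt m$. Second, since $2x_ix_j\le2K^2m^{-1/2}$ for every $ij\in E_1$, we get $|E_1|\ge(1+\varepsilon/2)\sqrt m\cdot(2K^2m^{-1/2})^{-1}\ge m/(2K^2)$.

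It remains to run K\H{o}v\'{a}ri--S\'{o}s--Tur\'{a}n on $G_1$, which has $n_1:=|V_1|\le(K/\eta)^2\sqrt m$ vertices and at least $m/(2K^2)$ edges. By the classical bound $\mathrm{ex}(n,K_{s,t})=O_s(t^{1/s}n^{2-1/s})$, a $K_{s,t}$-free graph on $n_1$ vertices has $O_s(t^{1/s}n_1^{2-1/s})$ edges; since $n_1=O_{s,\varepsilon}(\sqrt m)$ we have $n_1^{2-1/s}=O_{s,\varepsilon}(m^{1-1/(2s)})$, so the estimate $m/(2K^2)\le e(G_1)$ would force a $K_{s,t}$-free $G_1$ to satisfy $t^{1/s}=\Omega_{s,\varepsilon}(m^{1/(2s)})$, that is $t=\Omega_{s,\varepsilon}(\sqrt m)$. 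Taking the contrapositive, for a suitable $\delta=\delta(s,\varepsilon)>0$ the graph $G_1$---and hence $G$---contains a copy of $K_{s,t}$ with $t\ge\delta\sqrt m$, as desired.

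The only step that genuinely uses the hypothesis is the passage from $G$ to $G_1$: a priori $G$ itself may be very far from dense (arbitrarily many vertices of tiny eigenvector weight, sparse appendages, isolated vertices), so K\H{o}v\'{a}ri--S\'{o}s--Tur\'{a}n cannot be applied to $G$ directly. What rescues the argument is that all edges of non-negligible Perron weight span only $O_{s,\varepsilon}(\sqrt m)$ vertices, which is exactly what \Cref{lem:max} delivers; and this is precisely where $\lambda(G)\ge(1+\varepsilon)\sqrt m$ with $\varepsilon>0$ is indispensable rather than merely $\lambda(G)=\sqrt m$ --- for instance $G=K_{2,m/2}$ has $\lambda(G)=\sqrt m$ but its two hub vertices carry eigenvector weight $\tfrac12$, and $K_{2,m/2}$ is $K_{3,3}$-free, so no analogue of the conclusion can hold on the boundary $\lambda(G)=\sqrt m$.
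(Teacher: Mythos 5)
Your proof is correct. The overall architecture is the same as the paper's — use \Cref{lem:max} to force the Perron weights down to $O(m^{-1/4})$, deduce that the "relevant" part of $G$ is a dense graph on $O_{s,\varepsilon}(\sqrt m)$ vertices, and finish with K\H{o}v\'{a}ri--S\'{o}s--Tur\'{a}n — but the reduction step is carried out differently. The paper takes a minimal counterexample (minimizing $|V(G)|+|E(G)|$) and uses an edge-deletion argument in the spirit of \Cref{lem-product-entry} to conclude that \emph{every} edge of $G$ satisfies $x_ix_j\ge \tfrac{1}{8\sqrt m}$, whence every vertex has weight $\Theta(m^{-1/4})$ and $G$ itself has $O(\sqrt m)$ vertices; KST is then applied to all of $G$. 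You instead perform a one-shot truncation: discard the edges with $x_ix_j<\eta m^{-1/2}$, observe via $\lambda(G)=2\sum_{ij\in E}x_ix_j$ that the surviving edge set $E_1$ still carries $(1+\varepsilon/2)\sqrt m$ of the Rayleigh sum, and deduce $|E_1|=\Omega_\varepsilon(m)$ while $|V_1|=O_\varepsilon(\sqrt m)$; KST is applied to the subgraph $G_1$. Your version avoids the minimal-counterexample device entirely (so there is no need to verify that the hypothesis and conclusion are preserved under edge deletion), and it only uses the Perron vector of $G$ restricted to $G_1$ rather than re-analyzing a new extremal graph; this is closer in spirit to \Cref{lem:removing-bad-edges} elsewhere in the paper. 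The price is negligible: you invoke the quantitative KST upper bound $\mathrm{ex}(n,K_{s,t})=O_s(t^{1/s}n^{2-1/s})$ and rearrange, where the paper quotes the equivalent dense form (\Cref{prop-linear-size}). Your closing remark that $K_{2,m/2}$ shows the hypothesis $\lambda(G)\ge(1+\varepsilon)\sqrt m$ cannot be weakened to $\lambda(G)\ge\sqrt m$ for $s\ge 3$ is also correct and consistent with \Cref{thm-common}.
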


We need the following variant of K\H{o}v\'{a}ri--S\'{o}s--Tur\'an's theorem. 

\begin{theorem} \label{prop-linear-size}
If $\varepsilon >0$ and $G$ is an $n$-vertex graph with $e(G)\ge \varepsilon n^2,$
then $G$ has a copy of 
$K_{s,t}$ with $s\ge 1$ and $t= (\varepsilon/{2s})^s n$. 
\end{theorem}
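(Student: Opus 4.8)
This is the classical K\H{o}v\'ari--S\'os--Tur\'an double count, so I expect no serious obstacle; the plan runs as follows. Fix the integer $s\ge 1$. I would first dispose of the degenerate regime: if $\varepsilon n<2s$, then $t=(\varepsilon/(2s))^s n<n^{1-s}\le 1$, so there is nothing to prove, and hence we may assume $\varepsilon n\ge 2s$. Write $\bar d=2e(G)/n\ge 2\varepsilon n$ for the average degree of $G$.

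The core is to count the pairs $(S,v)$ with $v\in V(G)$ and $S$ an $s$-element subset of $N(v)$. On the one hand, this count is exactly $\sum_{v}\binom{d(v)}{s}$; since no vertex is adjacent to itself, $S\subseteq N(v)$ forces $v\notin S$, so the two sides of such a pair are automatically disjoint (this little observation removes the need to discard $s$ vertices later). By convexity of the polynomial $x\mapsto\binom{x}{s}$ on $[s-1,\infty)$ — concretely, the tangent line to this increasing convex function at $\bar d\ge s-1$ lies below $\binom{x}{s}$ at every nonnegative integer $x$, including integer degrees below $s$, where $\binom{d(v)}{s}=0$ while the tangent line is nonpositive — Jensen's inequality gives $\sum_v\binom{d(v)}{s}\ge n\binom{\bar d}{s}$, with $\binom{\bar d}{s}$ read as the polynomial $\bar d(\bar d-1)\cdots(\bar d-s+1)/s!$. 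Using $\bar d\ge 2\varepsilon n$ together with $s\le\varepsilon n/2$, every one of the $s$ factors of $\bar d(\bar d-1)\cdots(\bar d-s+1)$ is at least $\varepsilon n$, so $\binom{\bar d}{s}\ge(\varepsilon n)^s/s!$.

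On the other hand, the same count equals $\sum_{S}\#\{v:S\subseteq N(v)\}$ over the at most $\binom{n}{s}\le n^s/s!$ choices of an $s$-set $S$. By averaging there is a fixed $s$-set $S_0$ contained in $N(v)$ for at least
\[
\frac{n\binom{\bar d}{s}}{\binom{n}{s}}\;\ge\;\frac{n\,(\varepsilon n)^s/s!}{n^s/s!}\;=\;\varepsilon^s n
\]
vertices $v$. Setting $T=\{v:S_0\subseteq N(v)\}$, the disjointness observation yields $S_0\cap T=\emptyset$, so $G[S_0\cup T]$ contains a copy of $K_{s,|T|}$ with parts $S_0$ and $T$; and $|T|\ge\varepsilon^s n\ge(\varepsilon/(2s))^s n$ since $2s\ge 1$. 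Taking $t=\lfloor(\varepsilon/(2s))^s n\rfloor$ then finishes the proof.

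The only points that need any care are cosmetic: justifying the Jensen/tangent-line step uniformly over the whole degree sequence (the one-line remark above handles vertices of degree below $s$), and the reduction to $\varepsilon n\ge 2s$, which both makes the statement non-vacuous and keeps the estimate $\binom{\bar d}{s}\ge(\varepsilon n)^s/s!$ clean. In fact the argument gives the stronger bound $t\ge\varepsilon^s n$; the extra slack factor $(2s)^{-s}$ in the stated theorem is precisely what absorbs these minor technicalities.
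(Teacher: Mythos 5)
Your proof is correct. The paper states \Cref{prop-linear-size} as a known variant of the K\H{o}v\'{a}ri--S\'{o}s--Tur\'{a}n theorem and supplies no proof of its own, so there is nothing to compare against; your double count of pairs $(S,v)$ with $S\subseteq N(v)$ --- including the tangent-line justification of Jensen for vertices of degree below $s$ and the reduction to the non-degenerate regime $\varepsilon n\ge 2s$ --- is exactly the standard argument the authors are implicitly invoking, and it even delivers the stronger bound $t\ge\varepsilon^s n$.
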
 

\begin{proof}[Proof of~\Cref{thm-squar-root}]
We denote $\delta := (8\varepsilon^{16}/s)^{s}\sqrt{2}$. 
Assume on the contrary that 
$G$ is a graph with $m$ edges such that 
$\lambda (G)\ge (1+ \varepsilon )\sqrt{m}$ and
$G$ does not contain a copy of $K_{s,t}$, where 
$t=\delta \sqrt{m}$. 
Among such graphs, we choose a graph $G$ such that $|E(G)| + |V(G)|$ is minimal.

We claim that $x_ix_j \ge 
1 / (8\sqrt{m})$ for every edge $ij\in E(G)$. 
Suppose on the contrary that there exists an edge $ij\in E(G)$ satisfying $x_ix_j < 1 / (8\sqrt{m})$.  
Removing the edge $ij$ from 
$G$, we obtain a subgraph $G'$ such that 
\[ \lambda (G')\ge \lambda (G) - 2x_ix_j 
>(1+\varepsilon)\sqrt{m} - 1 / (4\sqrt{m}) 
> (1+\varepsilon)\sqrt{m-1}. \]  
Clearly, $G'$ contains no copy of $K_{s,t}$. 
So $G'$ contradicts with the minimality of $G$.

Note that $\lambda^2(G)\ge (1+2\varepsilon)m$. 
By Lemma \ref{lem:max}, we know that 
$\max\{x_v: v\in V(G)\} < 1/(16\varepsilon^4 m^{1/4})$. 
Therefore, we get $\min\{x_v: v\in V(G)\} 
> 2\varepsilon^4/m^{1/4}$. 
Then $1=\sum_{i\in V} x_i^2 > n \cdot 4\varepsilon^8/\sqrt{m}$. It follows that  
 $G$ is a dense graph with 
$m>16 \varepsilon^{16} n^2$ edges and a desired copy of $K_{s,t}$ is guaranteed by the K\H{o}v\'{a}ri--S\'{o}s--Tur\'an theorem,  contradicting with the assumption. 
\end{proof}

 A similar argument of \Cref{thm-squar-root} gives the following result.  
 
 \begin{theorem} \label{thm-log-m}
For every $\varepsilon >0$, 
there is a constant $\delta = \delta (\varepsilon) >0$ such that 
if $G$ is an $m$-edge graph with 
$\lambda (G)\ge (1+ \varepsilon )\sqrt{m}$, then $G$ contains a copy 
of $K_{t,t}$ with $t= \delta \log m$. 
\end{theorem}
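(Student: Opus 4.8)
The plan is to run the argument of \Cref{thm-squar-root} verbatim to reduce to a dense graph, and then, in place of \Cref{prop-linear-size}, to invoke the \emph{balanced} form of the K\H{o}v\'{a}ri--S\'{o}s--Tur\'{a}n theorem: every $n$-vertex graph of constant edge density contains a copy of $K_{t,t}$ with $t=\Omega(\log n)$. Since $\lambda(G)<\sqrt{2m}$ always, and the hypothesis $\lambda(G)\ge(1+\varepsilon)\sqrt m$ only weakens as $\varepsilon$ decreases, I may assume $\varepsilon<0.1$. I would fix a small $\delta=\delta(\varepsilon)>0$, pinned down at the very end, put $t:=\lfloor\delta\log m\rfloor$, and suppose for contradiction that some $m$-edge graph $G$ has $\lambda(G)\ge(1+\varepsilon)\sqrt m$ but no copy of $K_{t,t}$; among all such counterexamples I take a minimal one (minimising $|V(G)|+|E(G)|$, exactly as in \Cref{thm-squar-root}), with unit Perron--Frobenius eigenvector $\bm{x}$.

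The reduction to the dense case is identical to \Cref{thm-squar-root}. If some edge $ij$ has $x_ix_j<1/(8\sqrt m)$, deleting it yields a $K_{t,t}$-free graph of strictly smaller size but spectral radius $>(1+\varepsilon)\sqrt m-1/(4\sqrt m)>(1+\varepsilon)\sqrt{m-1}$, contradicting minimality; hence $x_ix_j\ge 1/(8\sqrt m)$ on every edge. Now $\lambda^2(G)\ge(1+2\varepsilon)m$, so \Cref{lem:max} gives $\max_v x_v<(16\varepsilon^4)^{-1}m^{-1/4}$, whence $\min_v x_v>2\varepsilon^4 m^{-1/4}$, and $\sum_v x_v^2=1$ forces $n:=|V(G)|<(4\varepsilon^8)^{-1}\sqrt m$. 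Writing $\gamma:=16\varepsilon^{16}<1$, this says $e(G)=m>\gamma n^2$, so $G$ is dense; also $m\le\binom n2<n^2/2$ gives $n>\sqrt{2m}$, hence $\log n>\tfrac12\log m$.

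For the extraction step I would double count the pairs $(v,S)$ with $v\in V(G)$ and $S$ a $t$-subset of $N(v)$. Summing over $v$ and using convexity of $d\mapsto\binom dt$ with average degree $2m/n\ge 2\gamma n$ gives $\sum_v\binom{d_v}{t}\ge n\binom{2\gamma n}{t}$; summing over $S$ gives $\sum_{|S|=t}c(S)$, where $c(S)$ is the codegree of $S$. If $c(S)\ge 2t$ for some $t$-set $S$, then $S$ together with any $t$ vertices of its common neighbourhood lying outside $S$ would form a copy of $K_{t,t}$, so $K_{t,t}$-freeness forces $c(S)<2t$, and therefore
\[
n\binom{2\gamma n}{t}\le\sum_v\binom{d_v}{t}=\sum_{|S|=t}c(S)<2t\binom{n}{t}.
\]
Since $t\le\gamma n$ for large $m$, each factor of $\binom{2\gamma n}{t}/\binom{n}{t}=\prod_{i=0}^{t-1}\frac{2\gamma n-i}{n-i}$ exceeds $\gamma$, so $n\gamma^t<2t$, i.e.\ $t\log(1/\gamma)>\log n-\log(2t)$. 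As $t\le\log n$ we have $\log(2t)\le\tfrac12\log n$ once $m$ is large, giving $t>\frac{\log n}{2\log(1/\gamma)}>\frac{\log m}{4\log(1/\gamma)}$, which contradicts $t=\lfloor\delta\log m\rfloor$ provided $\delta<\frac{1}{4\log(1/\gamma)}$. The finitely many remaining small $m$ are harmless since then $t\le 1$ and $G$ contains an edge, a copy of $K_{1,1}$.

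The only genuinely new ingredient compared with \Cref{thm-squar-root} is the balanced K\H{o}v\'{a}ri--S\'{o}s--Tur\'{a}n count above: because both parts of $K_{t,t}$ grow with $n$, \Cref{prop-linear-size} cannot be applied and one only gets $t$ of order $\log n$, so the delicate part is carrying the logarithm cleanly through the binomial ratio bound $\binom{2\gamma n}{t}/\binom{n}{t}\ge\gamma^t$ (valid for $t\le\gamma n$) and checking that the resulting constants are consistent with the targets $t\asymp\delta\log m$ and $\gamma\asymp\varepsilon^{16}$. The minimal-counterexample set-up and the flattening of $\bm{x}$ via \Cref{lem:max} are word-for-word as in the proof of \Cref{thm-squar-root}.
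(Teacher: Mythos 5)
Your argument is correct and follows essentially the same route as the paper: a minimal counterexample whose Perron--Frobenius vector is flattened via Lemma \ref{lem:max}, yielding density $m>\Omega(\varepsilon^{16})n^2$, followed by the balanced K\H{o}v\'{a}ri--S\'{o}s--Tur\'{a}n bound giving $K_{t,t}$ with $t=\Omega(\log n)=\Omega(\log m)$. The only difference is that you supply a self-contained double-counting proof of the balanced density step (with correct codegree and binomial-ratio estimates), whereas the paper simply quotes that standard result.
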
 

\begin{proof}
We denote by $\delta:=\left(2\log (1/32\varepsilon^{16}) \right)^{-1}$. 
Assume on the contrary that $G$ does not contain a copy of $K_{t,t}$ with $t=\delta \log m$. Similar to the proof of Theorem \ref{thm-squar-root}, we replace \Cref{prop-linear-size} with the following density result: 
If $G$ is an $n$-vertex graph with 
$ e(G)\ge \varepsilon {n \choose 2}$, where $\varepsilon >0$, 
 then $G$ contains a copy of $K_{t,t}$ where $t=\lfloor {\log n}\cdot \left({2 \log (1/ \varepsilon)} \right)^{-1} \rfloor$. Therefore, the density result 
 $m>32 \varepsilon^{16} {n \choose 2}$ implies that 
 $G$ contains 
    a copy of $K_{t,t}$ with $t\ge {\log n}\cdot (2\log (1/32\varepsilon^{16}) )^{-1}> 
    {\log (2m)} \cdot (2{\log (1/32\varepsilon^{16})} )^{-1}$. 
    This contradicts with the assumption. 
\end{proof}

The $\log m$ bound above is the best possible by considering the random graph $G(n, p)$, where each edge is presented independently with probability $p$. By a standard probabilistic argument, for any $p\in (0,1)$, there exists an $n$-vertex graph $G$ with $e(G)\ge p{n \choose 2}$, but $G$ does not contain $K_{t,t}$ with $t=\frac{3\log n}{\log (1/p)}$. Taking $p=\frac{2}{3}$, we have $\lambda (G) > 1.1 \sqrt{m}$, but $G$ has no $K_{t,t}$ with $t=8\log m$.

\medskip 
There are many promising directions to explore in the edge-spectral Tur\'{a}n type problem, and we provide another example for cycles with consecutive lengths. 

\begin{theorem}
    For any $\varepsilon >0$, there exists $\delta = \delta (\varepsilon) >0$ such that if $G$ is an $m$-edge graph with $\lambda (G)\ge (1+ \varepsilon )\sqrt{m}$, then $G$ contains a cycle of length $\ell$ for every $3\le \ell \le \delta \sqrt{m}$. 
\end{theorem}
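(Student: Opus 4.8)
The plan is to run the ``reduce to the dense case'' machinery of \Cref{conj-LLF}, \Cref{thm-squar-root} and \Cref{thm-log-m}, and then to invoke a pancyclicity-type result for dense non-bipartite graphs.

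\emph{Reduction to a dense subgraph.} We may assume $\varepsilon\le 0.3$. Let $\bm x$ be the unit Perron--Frobenius eigenvector of $G$ and $\lambda=\lambda(G)\ge(1+\varepsilon)\sqrt m$. From $\lambda^2\ge(1+2\varepsilon)m$ and \Cref{lem:max} we get $\max_v x_v<(2\varepsilon)^{-4}m^{-1/4}$. Put $\tau:=\varepsilon^{10}m^{-1/2}$, let $V_{\mathrm g}:=\{v:\ x_ux_v\ge\tau\text{ for some }u\in N(v)\}$, and set $G':=G[V_{\mathrm g}]$. Every $v\in V_{\mathrm g}$ satisfies $x_v>16\varepsilon^{14}m^{-1/4}$, so $|V_{\mathrm g}|=O_\varepsilon(\sqrt m)$; and since each edge of $G$ meeting $V(G)\setminus V_{\mathrm g}$ has $\bm x$-weight below $\tau$, those edges contribute at most $2m\tau=2\varepsilon^{10}\sqrt m$ to the Rayleigh sum, whence $\lambda(G')\ge(1+\tfrac{\varepsilon}{2})\sqrt m\ge(1+\tfrac{\varepsilon}{2})\sqrt{e(G')}$. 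Consequently $e(G')>m/2$, so $|V_{\mathrm g}|>\sqrt m$ and $e(G')=\Omega_\varepsilon(|V_{\mathrm g}|^{2})$. Finally, were $G'$ made bipartite by deleting fewer than $\tfrac{\varepsilon^{2}}{16}m$ edges, the Rayleigh formula would give $\lambda(G')<\sqrt{e(G')}+\sqrt{\tfrac{\varepsilon^{2}}{8}m}<(1+\tfrac{\varepsilon}{2})\sqrt m$, contradicting the bound above; hence $G'$ is quantitatively far from bipartite. As cycles of $G'$ are cycles of $G$, it suffices to locate the required cycles inside $G'$.

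\emph{A non-bipartite subgraph with linear minimum degree, then pancyclicity.} Repeatedly deleting vertices of current degree below $k|V_{\mathrm g}|$, for a constant $k=k(\varepsilon)>0$ small enough that $k|V_{\mathrm g}|^{2}<\tfrac{\varepsilon^{2}}{16}m$, leaves a nonempty $G_1\subseteq G'$ with $\delta(G_1)\ge k|V_{\mathrm g}|$; only fewer than $\tfrac{\varepsilon^{2}}{16}m$ edges were removed, so $G_1$ is still (robustly) non-bipartite, and keeping the component of largest spectral radius makes it connected. Here $|V(G_1)|\le|V_{\mathrm g}|=O_\varepsilon(\sqrt m)$ while $\delta(G_1)\ge k|V_{\mathrm g}|>k\sqrt m$, so $\delta(G_1)=\Omega_\varepsilon(|V(G_1)|)$ and $|V(G_1)|=\Omega_\varepsilon(\sqrt m)$. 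Using the far-from-bipartite estimate together with this minimum-degree bound, I would single out a $2$-connected non-bipartite subgraph $H\subseteq G_1$ of order $\Omega_\varepsilon(\sqrt m)$ whose minimum degree is linear in $|V(H)|$ (a large leaf block of $G_1$ has linear order and, away from its cut vertex, linear minimum degree, and not all such blocks can be bipartite without violating the estimate); being dense, $\mathrm{girth}(H)\in\{3,4\}$. Applying a weak-pancyclicity theorem for $2$-connected non-bipartite graphs with linear minimum degree (in the spirit of Bondy's pancyclicity theorem, and of results on cycle lengths under minimum-degree conditions due to Gould--Haxell--Scott and to Liu--Ma), $H$ contains a cycle of every length $\ell$ with $\mathrm{girth}(H)\le\ell\le\Omega_\varepsilon(|V(H)|)$; since $\mathrm{girth}(H)\le 4$, this produces $C_4,C_5,\dots,C_{\delta\sqrt m}$ in $G$ for a suitable $\delta=\delta(\varepsilon)>0$, while the length $\ell=3$ is covered because $\lambda(G)>\sqrt m$ already forces a triangle by Nosal's theorem \cite{Nos1970}. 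Hence $G$ contains $C_\ell$ for every $3\le\ell\le\delta\sqrt m$.

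\emph{The main obstacle.} The delicate point is extracting from the dense graph $G'$ a subgraph that is simultaneously $2$-connected, non-bipartite, of linear order, of linear minimum degree, and of small girth: a naive minimum-degree pass can destroy all short odd cycles (a blow-up of $C_7$ has linear minimum degree but no $C_5$), so the argument must exploit that the spectral gap --- hence the robust non-bipartiteness --- is preserved throughout the reductions. If one prefers to sidestep this, the short odd cycles $C_5,C_7,\dots$ can instead be obtained directly from $G_1$ via the Nosal-type refinements cited in the paper \cite{Niki2021,NZ2021,LLZ2024-book-qua}, leaving the dense-graph pancyclicity input to supply only the even and the long cycles. Everything else is routine bookkeeping of the sort already carried out for \Cref{conj-LLF}.
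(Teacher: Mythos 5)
Your proposal has a genuine gap, and it is exactly the one you flag as ``the main obstacle'': the weak-pancyclicity input you want to invoke does not exist in the form you need. A $2$-connected non-bipartite graph of linear minimum degree and girth $4$ need not contain a cycle of every length from its girth up to a linear threshold --- your own example, a balanced blow-up of $C_7$, has minimum degree $2n/7$, girth $4$, is $2$-connected and non-bipartite, yet contains no $C_5$ (a closed walk of length $5$ in $C_7$ would need its $\pm 1$ steps to sum to $0 \bmod 7$, which is impossible for an odd number of steps of total absolute size at most $5$). The theorems of Bondy (which requires degree $n/2$), Gould--Haxell--Scott, and Liu--Ma give cycles of all (or all even, or all odd) lengths only in a window of the form $[C(k), n/k]$ whose lower end is a constant depending on the degree ratio, not the girth; they cannot supply $C_5, C_7, \dots$ below that constant. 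The fallback you suggest --- extracting the short odd cycles from ``Nosal-type refinements'' --- is not an argument: the cited results count triangles and quadrilaterals, and preserving robust non-bipartiteness through the minimum-degree cleaning does not by itself yield short odd cycles (the $C_7$ blow-up is far from bipartite). So the reduction to a dense, far-from-bipartite subgraph, which you execute plausibly, still leaves the actual difficulty unresolved.

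The paper's proof sidesteps all of this with one local observation. Writing $T = A^2 - \varepsilon\sqrt{m}\,A$ and comparing the eigenvalue $\lambda^2 - \varepsilon\sqrt{m}\,\lambda > m$ with the row-sum bound $\sum_j t_{ij} \le m + e(N(i)) - \varepsilon\sqrt{m}\,|N(i)|$, one finds a single vertex $i$ with $e(N(i)) > \varepsilon\sqrt{m}\,|N(i)|$. The Erd\H{o}s--Gallai theorem then gives a path on roughly $2\varepsilon\sqrt{m}$ vertices inside $N(i)$, so $G$ contains $K_1\vee P_{t+1}$, and this fan contains $C_\ell$ for every $3\le \ell\le t+1$ simultaneously --- short odd lengths included. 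To salvage your approach you would need to replace the pancyclicity black box with an argument of this flavour that produces the short cycles directly.
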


\begin{proof}
    We denote $\delta =2 \varepsilon$ and $\lambda = \lambda (G)$. 
    Then $\lambda (G)\ge (1+\varepsilon)\sqrt{m}$ gives 
    $\lambda^2 - \varepsilon \sqrt{m} 
    \lambda > m$. Now, we consider the matrix $T:=A^2 - \varepsilon \sqrt{m} A$. Since $A$ is non-negative, 
    it is well-known that 
    \[ \lambda (T)\le \max_{i\in V} \sum_{j\in V} t_{ij}.\]  Clearly, we have 
    $\lambda (T) = \lambda^2- \varepsilon \sqrt{m} \lambda > m$. 
    On the other hand, we get  
 \[ \sum_{j\in V}t_{ij} =
    \sum_{k\sim i} \sum_{j\sim k} 1 - \varepsilon \sqrt{m} \sum_{j\sim i} 1 
    \le m +e(N(i)) - \varepsilon \sqrt{m} |N(i)|. \]
    Therefore, there exists a vertex  
    $i\in V$ such that $e(N(i)) > \varepsilon \sqrt{m} |N(i)|$. 
    By the Erd\H{o}s--Gallai theorem, we find a path $P_{t+1}$ in $N(i)$ with length $t := 2 \lfloor \varepsilon \sqrt{m} \rfloor +1$. So $G$ contains a copy of $K_1\vee P_{t+1}$, which implies that $G$ contains all cycles $C_{\ell}$ with $3\le \ell \le 2\varepsilon \sqrt{m}$, as needed.     
\end{proof}

The above bound $\Omega (\sqrt{m})$ on $\ell$ is optimal up to a constant factor.



\section*{Acknowledgements}
This paper was dedicated to Vladimir Nikiforov, whose beautiful works on spectral extremal graph theory inspire the authors. 
This work was initiated after the 1st IBS ECOPRO student research program in fall 2023. The first and third authors would like to thank ECOPRO group for hosting them. The authors also thank Prof. Hao Huang for motivating this project.

\frenchspacing


\appendix

\section{An alternative proof of \Cref{thm-stability-clique-4}} 

\label{sec-A}

In this section, we provide an alternative proof of the edge-spectral stability result for cliques by using the Kruskal--Katona stability theorem in \cite{LM2023}. We provide a detailed argument when forbidding a $K_4$, which easily extends to larger cliques. Let us first outline the idea.

\begin{enumerate}
    \item If $\lambda(G)$ is close to the maximal, that is, 
    $\lambda (G)= (1-o(1))\sqrt{4m/3}$, then we can conclude that $k_3(G)$ is close to its maximal value $(m/3)^{3/2}$ in the S\'{o}s--Straus theorem.
    \item We construct a hypergraph $\mathcal{H}$ whose edges are the triangles of $G$. By Theorem 1.8 in \cite{LM2023}, 
    we see that $\mathcal{H}$ is close to a complete $r$-partite hypergraph.
    \item Returning to the $2$-graphs, 
    it is not hard to show that $G$ is close to $T_{n, r}$, as desired. 
\end{enumerate}

To begin with, we need some notations. 
We denote by $k_r(G)$ the number of cliques of order $r$ in $G$.  
For example, we have $k_2(G)=m$ and $k_1(G)=n$. 

\begin{lemma}[Nikiforov \cite{Niki2002}] \label{thm-Niki-2002-p}
Let $G$ be a $K_{\ell+1}$-free graph. Then 
\begin{equation} \label{eq-Niki-2002-clique}
 \lambda^{\ell} (G) \le 
\sum_{i=2}^{\ell} (i-1)k_i(G) \lambda (G)^{\ell -i},
\end{equation}
\end{lemma}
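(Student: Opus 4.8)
The statement is a theorem of Nikiforov~\cite{Niki2002}; here is the strategy I would follow. Since $k_i(G)=0$ once $i$ exceeds the clique number $\omega=\omega(G)$, and since the inequality for any exponent $\ell\ge\omega$ follows from the case $\ell=\omega$ upon multiplying both sides by $\lambda(G)^{\ell-\omega}$, it is enough to prove $\lambda^{\omega}(G)\le\sum_{i=2}^{\omega}(i-1)k_i(G)\lambda(G)^{\omega-i}$. I would induct on $\omega$. The base case $\omega\le 2$ reduces to Nosal's theorem~\cite{Nos1970}: if $G$ is triangle-free then $\lambda^2(G)\le e(G)=k_2(G)$, which one sees quickly by taking the vertex $u$ of largest Perron weight, writing $\lambda^2 x_u=\sum_{v\in N(u)}\sum_{w\in N(v)}x_w$, noting that triangle-freeness forces each such $w$ to lie outside $N(u)$, and bounding $x_w\le x_u$ to get $\lambda^2\le\sum_{v\in N(u)}d(v)\le e(G)$.

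For the inductive step, let $\bm{x}$ be the unit Perron eigenvector, pick $u$ with $x_u=\max_v x_v$, and set $H:=G[N(u)]$, which is $K_{\omega}$-free because $G$ is $K_{\omega+1}$-free; crucially, a set $S\subseteq N(u)$ is a clique of $H$ exactly when $S\cup\{u\}$ is a clique of $G$, so $i$-cliques of $H$ biject with $(i+1)$-cliques of $G$ through $u$. Expanding $\lambda^{\omega}x_u=(A_G^{\omega}\bm{x})_u$ as a sum over walks of length $\omega$ from $u$, I would split these walks according to the first time they leave $N(u)\cup\{u\}$. The walks that never leave (i.e.\ after the first step they stay inside $N(u)$) are governed by $A_H$ acting on $\bm{x}$ restricted to $N(u)$; iterating a weighted form of the inductive hypothesis on $H$ and invoking the clique bijection above produces exactly the terms $(i-1)k_i(G)\lambda^{\omega-i}$ for $i\ge 3$ together with part of the $i=2$ term. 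The walks that do leave contribute, after bounding every coordinate by $x_u$ and using $\sum_{w\in N(v)}x_w=\lambda x_u$ at the exit step, a quantity at most $e(G)\,\lambda^{\omega-1}$, which supplies the remainder of the $i=2$ term. Since $\bm{x}$ restricted to $N(u)$ is not the Perron eigenvector of $H$, the correct tool for the ``interior'' part is the Lagrangian circle of ideas around the Motzkin--Straus inequality (\Cref{lem-MS} and its clique-count refinements~\cite{SS1982,FR1992}): for a $K_{r+1}$-free graph and arbitrary nonnegative weights, weighted edge sums are dominated by weighted clique sums, and this is what gets iterated.

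The step I expect to be the real obstacle is precisely this bookkeeping: isolating the coefficient $(i-1)$ and carrying the non-eigenvector weights on $N(u)$ correctly through the induction. To keep the induction self-contained I would actually aim to prove, by the same neighborhood argument, a weighted statement of the form ``for every $K_{\ell+1}$-free $G$ and every nonnegative vector $\bm{y}$, the iterated bilinear form $\langle A_G^{\ell}\bm{y},\bm{y}\rangle$ is dominated by the weighted $i$-clique sums with coefficients $(i-1)$,'' and then specialize to the Perron eigenvector using $\langle A_G\bm{x},\bm{x}\rangle=\lambda$. As a sanity check that genuine use of $K_{\ell+1}$-freeness is unavoidable, observe that the trace identity $\sum_i\lambda_i^3=6k_3(G)$ combined only with $|\lambda_i|\le\lambda$ yields the weaker $\lambda^3\le e(G)\lambda+3k_3(G)$, short of the claimed $\lambda^3\le e(G)\lambda+2k_3(G)$ by a constant factor on the clique term — so the sharper coefficient really must come from the neighborhood induction, which is exactly where the forbidden clique enters.
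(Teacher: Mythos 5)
This lemma is imported by the paper from \cite{Niki2002} without proof, so there is no in-paper argument to compare against; what follows assesses your proposal on its own terms. Your preliminary reductions are fine: restricting to $\ell=\omega(G)$ and recovering the general $\ell$ by multiplying through by $\lambda^{\ell-\omega}$ is correct, the base case is exactly Nosal's theorem and your sketch of it is the standard correct one, and your closing observation that the trace method only yields $\lambda^3\le m\lambda+3k_3$ is accurate and correctly diagnoses that the forbidden clique must enter through a structural argument.

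The inductive step, however, which is the entire content of the lemma, is not carried out, and the accounting you do sketch does not close. Three concrete problems. First, the exterior-walk bound you propose, ``at most $e(G)\lambda^{\omega-1}$,'' cannot ``supply the remainder of the $i=2$ term'': that term is $(2-1)k_2\lambda^{\omega-2}=m\lambda^{\omega-2}$, so your bound overshoots the entire right-hand side by a factor of $\lambda$ (note $m\lambda^{\omega-2}\asymp\lambda^{\omega}$ already, so $m\lambda^{\omega-1}\asymp\lambda^{\omega+1}$). Second, the clique bijection transfers coefficients with an off-by-one: a $j$-clique of $H=G[N(u)]$ is a $(j+1)$-clique of $G$ through $u$, so the inductive hypothesis on the $K_{\omega}$-free graph $H$ contributes coefficient $j-1=i-2$ to $k_i(G)$ where the target needs $i-1$; you must extract one further unit of $k_i(G)\lambda^{\omega-i}$ for every $i\ge 2$ from the boundary terms, and nothing in the sketch produces it. Third, and most fundamentally, the inductive hypothesis is a statement about $\lambda(H)$, whereas the ``interior'' walks you isolate are governed by $A_H$ acting on the restriction of $G$'s Perron vector, which is not an eigenvector of $H$; you acknowledge this and propose to prove instead a weighted inequality for $\langle A_G^{\ell}\bm{y},\bm{y}\rangle$ with arbitrary nonnegative $\bm{y}$, but you never state what the right-hand side of that inequality is, and finding a formulation that is simultaneously strong enough to specialize to $k_i(G)\lambda^{\ell-i}$ at $\bm{y}=\bm{x}$ and weak enough to propagate through the neighborhood induction is precisely the missing idea. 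As it stands the proposal is a plausible plan with the decisive step unexecuted, not a proof.
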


\begin{lemma}[S\'{o}s--Straus \cite{SS1982}; 
Fisher--Ryan \cite{FR1992}]   
\label{lem-SS-FR}
Let $G$ be a $K_{\ell+1}$-free graph on $n$ vertices. 
For every $i\in [\ell]$, let $k_i$ denote the number of copies of $K_i$ 
in $G$. Then 
\begin{equation} \label{eq-SS-FR}
  \left( \frac{k_{\ell}}{{\ell \choose \ell}}\right)^{1/\ell} 
\le  \left( \frac{k_{\ell-1}}{{\ell \choose \ell-1}}\right)^{1/(\ell-1)} 
\le \cdots \le \left( \frac{k_2}{{\ell \choose 2}}\right)^{1/2} 
\le \left( \frac{k_1}{{\ell \choose 1}}\right)^{1/1}. 
\end{equation} 
\end{lemma}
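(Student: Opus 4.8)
The plan is to prove the chain by establishing each single step
$\bigl(k_i/\binom{\ell}{i}\bigr)^{1/i}\le\bigl(k_{i-1}/\binom{\ell}{i-1}\bigr)^{1/(i-1)}$
for $2\le i\le\ell$ separately; composing these steps then gives the full statement, with the terminal step $k_2\le\binom{\ell}{2}(n/\ell)^2=(1-\tfrac1\ell)\tfrac{n^2}2$ being exactly Tur\'an's theorem.

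The natural tool is the S\'os--Straus generalisation of the Motzkin--Straus inequality (\Cref{lem-MS} is the $r=2$ case). For a probability weighting $x=(x_v)_{v\in V(G)}$, set $\Lambda_r(x):=\sum_{S}\prod_{v\in S}x_v$, the sum over $r$-element cliques $S$ of $G$. First I would show, by a smoothing argument, that $\max_x\Lambda_r(x)$ over the simplex is attained on a clique: if $u,v$ are non-adjacent and both carry positive weight, then $\Lambda_r$ is affine in $x_v$ with the other coordinates fixed, so shifting all of $v$'s weight onto $u$ does not decrease $\Lambda_r$; iterating collapses the support to a clique of size $s\le\ell$. On a clique of size $s$ the maximum is the uniform weighting, with value $\binom{s}{r}/s^r=\tfrac1{r!}\prod_{j<r}(1-j/s)$, which is increasing in $s$; hence $\max_x\Lambda_r(x)\le\binom{\ell}{r}/\ell^r$ for every $K_{\ell+1}$-free $G$. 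Feeding in the uniform weighting $x_v\equiv1/n$ already yields $k_r(G)\le\binom{\ell}{r}(n/\ell)^r$, i.e.\ the endpoint inequalities $t_r\le t_1$.

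To obtain the intermediate steps $t_i\le t_{i-1}$ I would either (i) pass to a weighting concentrated on the support of an optimal $\Lambda_{i-1}$-weighting (which, by the smoothing argument, may be taken uniform on a clique) and compare the resulting $\Lambda_i$ and $\Lambda_{i-1}$ values, performing the Maclaurin-type bookkeeping on the binomial coefficients $\binom{\ell}{i}$; or (ii) run an induction on $\ell$ via the double-counting identity $i\,k_i(G)=\sum_{v\in V(G)}k_{i-1}(G[N(v)])$, noting that each $G[N(v)]$ is $K_\ell$-free so the inductive hypothesis controls $k_{i-1}(G[N(v)])$ in terms of $|N(v)|$ and of $k_{i-2}(G[N(v)])$, and then summing.

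I expect this last comparison to be the main obstacle. In route (ii), after substituting the inductive bound one is left needing an inequality of the shape $\sum_v k_{i-2}(G[N(v)])^{(i-1)/(i-2)}\le C\bigl(\sum_v k_{i-2}(G[N(v)])\bigr)^{i/(i-1)}$; a naive power-mean estimate points the wrong way, so the argument must exploit that the clique-degrees $k_{i-2}(G[N(v)])$ cannot be concentrated on a few vertices — each $G[N(v)]$ is itself $K_\ell$-free and embedded in $G$, which caps its clique counts. In route (i) the analogous difficulty is that $\max_x\Lambda_r(x)$ only records a single concentrated weighting, so directly it gives only $t_r\le t_1$; extracting the sharper $t_r\le t_{r-1}$ requires choosing the weighting so that the numerator reads off $k_i$ and the normalisation reads off $k_{i-1}$, and then tracking the ratios $\binom{\ell}{i}/\binom{\ell}{i-1}^{i/(i-1)}$ exactly. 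Once this comparison is settled, concatenating the single steps completes the proof.
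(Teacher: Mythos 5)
The paper does not prove this lemma at all: it is imported verbatim from S\'os--Straus and Fisher--Ryan, so there is no in-paper argument to compare against. Judged on its own terms, your proposal establishes only a strictly weaker statement and you say so yourself. The Motzkin--Straus smoothing step is fine: $\Lambda_r$ is multilinear, non-adjacent weighted vertices can be merged without decreasing $\Lambda_r$, the maximum on a clique of size $s$ is $\binom{s}{r}s^{-r}=\frac{1}{r!}\prod_{j<r}(1-j/s)$, which is increasing in $s$, so plugging in the uniform weighting gives $k_r\le\binom{\ell}{r}(n/\ell)^r$, i.e.\ only the comparisons $t_r\le t_1$ of each term with the last one. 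But the content of the lemma is the chain of \emph{consecutive} inequalities $t_i\le t_{i-1}$, and that is exactly what the paper uses in Appendix~\ref{sec-A}: the step ``$G$ is $K_4$-free with at most $(1-\gamma)m$ cross-edges, hence fewer than $((1-\gamma)m/3)^{3/2}$ triangles'' is the instance $\bigl(k_3/\binom{3}{3}\bigr)^{1/3}\le\bigl(k_2/\binom{3}{2}\bigr)^{1/2}$, which your argument does not deliver.

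Both of your proposed routes to the consecutive steps are left open at precisely the point where the work lies. Route (i) cannot work as stated: evaluating $\Lambda_i$ and $\Lambda_{i-1}$ at a weighting supported on a single clique tells you nothing about the global counts $k_i(G)$ and $k_{i-1}(G)$, which is why the Lagrangian method, used naively, only reaches $t_r\le t_1$. Route (ii) founders on the inequality you identify: after $i\,k_i(G)=\sum_v k_{i-1}(G[N(v)])$ and the inductive hypothesis, you need an upper bound on a sum of $p$-th powers with $p>1$ in terms of the $p$-th power of the sum, and Jensen gives the reverse direction; you gesture at ``the clique-degrees cannot be concentrated'' but supply no mechanism. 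So the proof is genuinely incomplete, not merely terse. To finish it you would need the actual S\'os--Straus/Fisher--Ryan argument (or an equivalent, e.g.\ the Kruskal--Katona-type induction for flag complexes of $K_{\ell+1}$-free graphs); alternatively, since the lemma is a classical cited result, the honest course is simply to quote it, as the paper does.
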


In fact, we can obtain an alternative proof of Nikiforov's bound (\ref{eq-Niki-2002-cpc}) by using Lemmas \ref{thm-Niki-2002-p} and \ref{lem-SS-FR}. 
To show the stability of Nikiforov's bound, we need to establish a stability 
result of Lemma \ref{lem-SS-FR} for our purpose. This can be deduced from a stability result on hypergraphs. 

\begin{defn}
Let $\mathcal{H}$ be an $r$-uniform hypergraph on vertex set $V$. 
The shadow of $\mathcal{H}$ is defined as 
$ \partial \mathcal{H} = \left\{A \in \tbinom{V}{r-1} : \text{there is $B\in E(\mathcal{H})$ such that $A\subset B$}  \right\}$. 
\end{defn}

In 2023, Liu and Mukherjee \cite{LM2023} proved 
the following stability result.

\begin{theorem}[Liu--Mukherjee \cite{LM2023}] \label{thm-LM2022}
For every $\ell \ge r \ge 2$ and $\varepsilon >0$, 
there exist $n_0$ and $\delta >0$ such that 
the following holds for all numbers $x\ge n_0$. 
Suppose that $\mathcal{H}$ is an  $\mathcal{K}_{\ell +1}^{(r)}$-free $r$-graph with 
\[ |\partial \mathcal{H}| = {\ell \choose r-1} \left( \frac{x}{\ell}\right)^{r-1}~~\text{and}~~ 
|\mathcal{H}| \ge (1-\delta ){\ell \choose r} 
\left( \frac{x}{\ell}\right)^r. \]
Then there exists a vertex set $C\subseteq V(\mathcal{H})$ of size at most $\lceil x\rceil$
such that $d(\mathcal{H},\mathcal{T}_{C,\ell}^{(r)}) \le \varepsilon x^r$.  
\end{theorem}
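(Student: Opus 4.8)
The plan is to run the argument in two stages: first use the underlying non-stability Kruskal--Katona-type inequality of Liu--Mubayi to land in a near-equality regime, and then convert near-equality into near-structure by a supersaturation argument. Writing $m := |\mathcal H|$ and $V := V(\mathcal H)$, the Liu--Mubayi inequality applied with $|\partial\mathcal H| = \binom{\ell}{r-1}(x/\ell)^{r-1}$ gives $m \le \binom{\ell}{r}(x/\ell)^r$, so the hypothesis $m \ge (1-\delta)\binom{\ell}{r}(x/\ell)^r$ says we sit within a $(1-\delta)$-factor of the extremal ``$\ell$ equal parts of size $x/\ell$'' configuration; moreover a Kruskal--Katona argument (in Lov\'asz form, applied to $\partial\mathcal H$) together with near-maximality of $m$ confines the relevant vertex set to size $(1+o_\delta(1))x$, so that configuration is the only possible near-optimum. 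The target is therefore a partition $V_1 \cup \dots \cup V_{\ell} \subseteq V$ with $\sum_i |V_i| \le \lceil x\rceil$ such that all but an $o_\delta(1)$-fraction of the edges of $\mathcal H$ are \emph{transversal} (meeting each $V_i$ at most once) and all but an $o_\delta(1)$-fraction of the transversal $r$-sets are edges.

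For the structural extraction I would induct on $r$, the base case $r=2$ being precisely the Erd\H{o}s--Simonovits stability theorem; the subtlety is that the link $\mathcal H_v := \{e\setminus\{v\} : v\in e\in\mathcal H\}$ need \emph{not} be $\mathcal K_{\ell}^{(r-1)}$-free --- already the star $\{e : v\in e\}$ is $\mathcal K_{\ell+1}^{(r)}$-free with complete link --- so one cannot simply feed links into the inductive hypothesis. The remedy is to quantify the slack in the Liu--Mubayi inequality: opening up its proof (iterated compression/averaging) and tracking at each step the distance to the complete $\ell$-partite configuration should yield that near-equality forces the \emph{codegrees} $d(A) := |\{v : A\cup\{v\}\in\mathcal H\}|$, $A\in\partial\mathcal H$, to be flat --- all but an $o_\delta(1)$-fraction of $A$ have $d(A) = (1\pm o_\delta(1))(\ell-r+1)(x/\ell)$, and symmetrically almost every vertex has link of size $(1\pm o_\delta(1))\binom{\ell-1}{r-1}(x/\ell)^{r-1}$. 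Deleting the $o_\delta(m)$ edges touching the exceptional sets, one then defines an equivalence relation on the surviving vertices by $u\approx v$ iff their links agree outside an $o_\delta(1)$-fraction, checks that each class is an ``almost-part'', and argues there are at most $\ell$ classes: $\ell+1$ large distinct classes would, by a K\H{o}v\'{a}ri--S\'{o}s--Tur\'{a}n-style count inside the near-complete $\ell$-partite skeleton, contain a transversal copy of $\mathcal K_{\ell+1}^{(r)}$, contradicting $\mathcal K_{\ell+1}^{(r)}$-freeness.

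The final step is bookkeeping: with the $\le\ell$ parts in hand, near-maximality of $m$ forces almost every transversal $r$-set to be an edge (otherwise one could delete a few edges and still beat the Liu--Mubayi bound relative to the shrunken shadow), while $\mathcal K_{\ell+1}^{(r)}$-freeness together with flatness forces almost no non-transversal $r$-set to be an edge; absorbing the $o_\delta(x)$ leftover vertices into $C$ then produces the desired $\mathcal T_{C,\ell}^{(r)}$ with $|C|\le\lceil x\rceil$ and $d(\mathcal H,\mathcal T_{C,\ell}^{(r)})\le\varepsilon x^r$. The main obstacle --- and essentially the whole content --- is the quantitative step in the second paragraph: turning ``near-equality in the Liu--Mubayi inequality'' into ``flat codegrees'' requires re-deriving that inequality in a stability-aware form rather than invoking it as a black box, exactly as stability for the classical Kruskal--Katona theorem must be obtained by revisiting its proof.
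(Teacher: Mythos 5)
This statement is not proved in the paper at all: it is quoted verbatim as an external black box from Liu--Mukherjee \cite{LM2023} and then invoked in Appendix \ref{sec-A}. So there is no internal proof to measure your attempt against, and the only question is whether your proposal stands on its own. It does not: it is a roadmap rather than a proof, and you say so yourself when you write that the quantitative step in your second paragraph is ``essentially the whole content.'' Deducing from near-equality in the Liu--Mubayi shadow inequality that almost all codegrees $d(A)$ are flat at $(\ell-r+1)(x/\ell)$ is precisely the stability-version-of-Kruskal--Katona difficulty that the cited theorem exists to resolve; asserting that ``opening up the proof and tracking the distance at each step should yield'' this is naming the problem, not solving it. Without that step, nothing downstream (the exceptional-set deletion, the link-equivalence classes, the final bookkeeping) can be carried out.

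Two further points in the sketch would need genuine work even granting flat codegrees. First, the definition of the equivalence relation $u\approx v$ by approximate agreement of links is not obviously transitive at the stated error tolerances, and showing that each class is an ``almost-part'' requires an argument, not just a check. Second, the claim that $\ell+1$ large classes force a transversal copy of $\mathcal{K}_{\ell+1}^{(r)}$ needs a supersaturation/embedding lemma: you must find $\ell+1$ vertices, one per class, \emph{all} $\binom{\ell+1}{r}$ of whose $r$-subsets are edges, and the $o_\delta(1)$-density of missing transversal edges has to be beaten by a counting argument over all choices of representatives. Your observation that links of vertices in a $\mathcal{K}_{\ell+1}^{(r)}$-free $r$-graph need not be $\mathcal{K}_{\ell}^{(r-1)}$-free is correct and does identify the real obstruction to a naive induction on $r$, but identifying the obstruction is as far as the proposal gets. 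If the intent is to use this theorem in the alternative proof of \Cref{thm-stability-clique-4}, the honest course is to cite \cite{LM2023} as the paper does.
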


Now, we are ready to show  
 \Cref{thm-stability-clique-4} for the case $r=3$. 

\begin{lemma}
    For any $0< \beta<0.01$, there exists $\delta >0$ such that if 
   $G$ is a $K_4$-free graph with $m$ edges and 
    $\lambda (G) \ge (1- \delta )(4m/3)^{1/2}$, then 
    $k_3(G)\ge (1- \beta)(m/3)^{3/2}$. 
\end{lemma}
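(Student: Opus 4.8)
The plan is to bypass the hypergraph machinery entirely and read the bound off directly from Nikiforov's clique inequality \Cref{thm-Niki-2002-p}. Applying that lemma with $\ell=3$ to the $K_4$-free graph $G$, and recalling $k_2(G)=m$, we get $\lambda^3(G)\le k_2(G)\lambda(G)+2k_3(G)=m\lambda(G)+2k_3(G)$, hence
$$k_3(G)\ge \tfrac12\lambda(G)\bigl(\lambda^2(G)-m\bigr).$$
So it remains only to feed in the hypothesis on $\lambda(G)$ and run an elementary one-variable estimate.

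Write $t:=(1-\delta)^2$. The hypothesis $\lambda(G)\ge (1-\delta)\sqrt{4m/3}$ gives $\lambda^2(G)\ge \tfrac{4t}{3}m$, so $\lambda^2(G)-m\ge \tfrac{4t-3}{3}m\ge 0$ once $\delta$ is small enough that $t>\tfrac34$ (recall $\beta<0.01$, so $\delta$ will be tiny). Since the map $\lambda\mapsto \lambda(\lambda^2-m)=\lambda^3-m\lambda$ has derivative $3\lambda^2-m>0$ for $\lambda>\sqrt{m/3}$, and both $\lambda(G)$ and the substituted lower bound $\sqrt{4tm/3}$ exceed $\sqrt{m/3}$, we may plug in the lower bounds to obtain
$$k_3(G)\ \ge\ \tfrac12\sqrt{\tfrac{4t}{3}m}\cdot\tfrac{4t-3}{3}m\ =\ \Bigl(4t^{3/2}-3t^{1/2}\Bigr)\Bigl(\frac{m}{3}\Bigr)^{3/2}.$$

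Finally I would choose $\delta$ so that $f(t):=4t^{3/2}-3t^{1/2}\ge 1-\beta$ with $t=(1-\delta)^2$. Since $f$ is convex on $(0,\infty)$ with $f(1)=1$ and $f'(1)=\tfrac92$, the tangent-line bound for convex functions gives $f\bigl((1-\delta)^2\bigr)\ge 1+\tfrac92\bigl((1-\delta)^2-1\bigr)=1-9\delta+\tfrac92\delta^2\ge 1-9\delta$. Thus $\delta:=\beta/9$ works (and for such $\delta$ all the smallness conditions used above — e.g. $(1-\delta)\sqrt{4/3}>1$ so that $\lambda^2(G)-m\ge0$ — hold since $\beta<0.01$), yielding $k_3(G)\ge(1-\beta)(m/3)^{3/2}$. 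There is essentially no obstacle here: the only points needing a line of care are verifying the nonnegativity $\lambda^2(G)-m\ge 0$ under the hypothesis and the elementary monotonicity/convexity facts about $f$ near $t=1$; this lemma then serves as the first step of the appendix argument, after which S\'os--Straus (\Cref{lem-SS-FR}) and the Liu--Mukherjee stability theorem finish the proof of \Cref{thm-stability-clique-4}.
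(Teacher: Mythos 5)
Your proof is correct and rests on the same key ingredient as the paper's: Nikiforov's clique inequality (\Cref{thm-Niki-2002-p}) with $\ell=3$, i.e.\ $\lambda^3\le m\lambda+2k_3$, followed by elementary analysis of the cubic $\lambda^3-m\lambda$. The only difference is organizational — you argue directly rather than by contradiction — and you obtain the cleaner explicit choice $\delta=\beta/9$ in place of the paper's $\delta=\beta^{10}$.
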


\begin{proof}
We take $\delta = \beta^{10}$. 
Suppose on the contrary that  
\[ k_3(G)< (1- \beta) (m/3)^{3/2}. \]
Then Lemma \ref{thm-Niki-2002-p} gives 
\[  \lambda^3 (G) \le m\lambda(G) + 2k_3(G) < 
m \lambda (G) + 2 (m/3)^{3/2} - 2\beta (m/3)^{3/2}. \]
Next, we deduce a contradiction by showing that $\lambda(G) < (1- \delta) (4m/3)^{1/2}:=x_0$. We denote 
\[ f(x):=x^3 - mx - 2(m/3)^{3/2} + 2\beta (m/3)^{3/2}.  \]
Then we have $f(\lambda )<0$. To show $\lambda < x_0$, 
it suffices to prove that $f(x)\ge 0$ for every $x\ge x_0$. Firstly, we can observe that $(4m/3)^{1/2}$ is the largest root of 
$x^3 - m x - 2(m/3)^{3/2}=0$. Then 
\begin{align*}
    f(x_0) &= 
(1- \beta^{10})^3 (4m/3)^{3/2} - m (1- \beta^{10}) (4m/3)^{1/2} - 2(m/3)^{3/2} 
+ 2\beta (m/3)^{3/2}\\ 
&= (-3\beta^{10} + 3\beta^{20} - \beta^{30}) (4m/3)^{3/2} + \beta^{10} m (4m/3)^{1/2} +2\beta (m/3)^{3/2} \\
&= m^{3/2} \left(2\beta (1/3)^{3/2} + \beta^{10}(1/3)^{1/2} -(3\beta^{10} - 3\beta^{20}+ \beta^{30})(4/3)^{3/2} \right) \\
&>0.
\end{align*}
Secondly, we have $f'(x)=3x^2 - m$. Setting $f'(x)=0$, we get $x=\pm (m/3)^{1/2}$. Consequently, $f(x)$ is monotonically increasing for $x\ge (m/3)^{1/2}$. Thus, 
$f(x)\ge 0$ for every $x\ge x_0$. 
\end{proof}

The case of $K_4$-free graph in \Cref{thm-stability-clique-4} can be reduced as follows. 

\begin{lemma}
    For any $0<\varepsilon <0.01$, there is $\beta >0$ such that if $G$ is a $K_4$-free graph with $m$ edges and $k_3(G)\ge (1- \beta)(m/3)^{3/2}$, then $d(G,T_{C,3})\le \varepsilon  m$ for some vertex set $C\subseteq V(G)$. 
\end{lemma}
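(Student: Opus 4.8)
The plan is to pass from $G$ to the $3$-uniform hypergraph $\mathcal{H}$ on vertex set $V(G)$ whose hyperedges are exactly the triangles of $G$, apply the Liu--Mukherjee stability theorem~\Cref{thm-LM2022} to deduce that $\mathcal{H}$ is close to a balanced complete tripartite $3$-graph, and then transfer this structure back down to $G$. Two quick observations get us going: since $G$ is $K_4$-free, four vertices spanning all four triples of $\mathcal{H}$ would span a $K_4$ in $G$, so $\mathcal{H}$ is $\mathcal{K}_4^{(3)}$-free; and the shadow $\partial\mathcal{H}$ is precisely the set of edges of $G$ lying in at least one triangle, so $\partial\mathcal{H}\subseteq E(G)$ and $|\partial\mathcal{H}|\le m$. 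Moreover $|\partial\mathcal{H}|=\Omega(m)$ by Kruskal--Katona, so $x := \sqrt{3\,|\partial\mathcal{H}|}$ satisfies $x\ge n_0$ once $m\ge m_0(\varepsilon)$. Fix a small $\varepsilon'=\varepsilon'(\varepsilon)$ to be chosen later, set $\delta := \delta_{\ref{thm-LM2022}}(\varepsilon')$ (with $\ell=r=3$), and take $\beta\le\delta$. By our choice of $x$ we have $|\partial\mathcal{H}|=\binom{3}{2}(x/3)^2$ exactly; and since $x\le\sqrt{3m}$ we get $(x/3)^3\le(m/3)^{3/2}$, so $|\mathcal{H}|=k_3(G)\ge(1-\beta)(m/3)^{3/2}\ge(1-\beta)\binom{3}{3}(x/3)^3$. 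Thus~\Cref{thm-LM2022} applies and produces a vertex set $C\subseteq V(G)$ with $|C|\le\lceil x\rceil$ such that $d(\mathcal{H},\mathcal{T}_{C,3}^{(3)})\le\varepsilon' x^3$; write $C_1,C_2,C_3$ for the three balanced parts of $\mathcal{T}_{C,3}^{(3)}$, which will also be the parts of the Turán graph $T_{C,3}$ we output.

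Next I would pin down the part sizes. On one hand $|C_1||C_2||C_3|=|\mathcal{T}_{C,3}^{(3)}|\ge|\mathcal{H}|-\varepsilon' x^3\ge(1-\beta)(m/3)^{3/2}-27\varepsilon'(m/3)^{3/2}$, using $x^3\le(3m)^{3/2}=27(m/3)^{3/2}$. On the other hand, balancedness together with $|C|\le\lceil x\rceil\le\sqrt{3m}+1$ gives $|C_i|\le\lceil|C|/3\rceil\le\sqrt{m/3}+O(1)$. Combining these two bounds, for $m$ large each part satisfies $|C_i|=(1\pm O(\beta+\varepsilon'))\sqrt{m/3}$; in particular $\min_i|C_i|\ge\sqrt{m}/4$ and $e(T_{C,3})=|C_1||C_2|+|C_1||C_3|+|C_2||C_3|=(1\pm O(\beta+\varepsilon'))\,m$.

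The crux is the passage from the hypergraph edit distance, which lives on the scale $\varepsilon' x^3=\Theta(\varepsilon' m^{3/2})$, to the graph edit distance on the scale $m$; the point is that a single wrong edge of $G$ costs a factor $\Theta(\sqrt m)$ inside $\mathcal{H}$, so the deflation works out. Call $\{u,v\}$ a \emph{missing cross pair} if $u\in C_i$, $v\in C_j$ with $i\ne j$ and $uv\notin E(G)$. For such a pair no triple $\{u,v,w\}$ with $w$ in the remaining part $C_k$ is a triangle of $G$, so all $|C_k|$ of these transversal triples belong to $\mathcal{T}_{C,3}^{(3)}\setminus\mathcal{H}$; since each transversal triple is accounted for in this way by at most three missing cross pairs, the number of missing cross pairs is at most $3\varepsilon' x^3/\min_i|C_i|=O(\varepsilon')\,m$. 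Hence the number of cross pairs that \emph{are} edges of $G$ equals $e(T_{C,3})$ minus $O(\varepsilon')m$, that is, $(1-O(\beta+\varepsilon'))m$, and since $e(G)=m$ at most $O(\beta+\varepsilon')m$ edges of $G$ are non-cross. Now $d(G,T_{C,3})$ is exactly the number of non-cross edges of $G$ plus the number of missing cross pairs (delete the former and add the latter to turn $G$ into $T_{C,3}$), so $d(G,T_{C,3})=O(\beta+\varepsilon')m$. Choosing $\varepsilon'$ small in terms of $\varepsilon$, then $\beta\le\delta_{\ref{thm-LM2022}}(\varepsilon')$ small, and $m$ large, this is at most $\varepsilon m$.

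I expect this final scale-matching to be the main obstacle: one must ensure that the $O(m^{3/2})$-sized hypergraph error really yields only $O(m)$ edge changes, which rests on (a) every missing cross pair being amplified by the \emph{uniform} factor $\Theta(\sqrt m)$ in $\mathcal{H}$ --- hence the need to bound all three $|C_i|$ from below --- and (b) controlling the non-cross edges of $G$ not through the hypergraph distance directly, but via the crude identity $e(G)=m\approx e(T_{C,3})$ together with (a). The argument goes through verbatim for $K_{r+1}$-free $G$ with any $r\ge3$, applying~\Cref{thm-LM2022} with $\ell=r$ and comparing $\mathcal{H}$, now the $r$-uniform hypergraph of $K_r$'s of $G$, to $\mathcal{T}_{C,r}^{(r)}$.
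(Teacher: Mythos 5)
Your proposal is correct, and it follows the same overall strategy as the paper: encode the triangles of $G$ as a $\mathcal{K}_4^{(3)}$-free $3$-graph $\mathcal{H}$, apply \Cref{thm-LM2022} with $x=\sqrt{3|\partial\mathcal{H}|}$ to get a balanced tripartition $C=C_1\sqcup C_2\sqcup C_3$, and transfer the structure back to $G$. Where you genuinely diverge is in the transfer-back step, which is the substantive part of the argument. The paper bounds the two error terms separately and by contradiction: the non-cross edges $E_b$ are controlled by applying the S\'os--Straus/Fisher--Ryan inequality (\Cref{lem-SS-FR}) to the cross-edge subgraph (too many non-cross edges would force too few good triangles), and the missing cross pairs $E_s$ are controlled by comparing the count of surviving transversal triples of $T_{C,3}$ against the lower bound on good triangles. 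You instead read the missing cross pairs directly off the hypergraph edit distance --- each missing cross pair places $\min_i|C_i|=\Theta(\sqrt m)$ transversal triples into $\mathcal{T}_{C,3}^{(3)}\setminus\mathcal{H}$, with multiplicity at most $3$ --- and then get the non-cross edges for free from the identity $e(G)=m$ together with $e(T_{C,3})=(1-O(\beta+\varepsilon'))m$, which you extract from the part sizes. This bypasses \Cref{lem-SS-FR} entirely in this step and makes the $m^{3/2}\to m$ scale deflation explicit and uniform; the paper's version, by contrast, reuses the clique-counting machinery it has already set up. Both are valid; your accounting of the over-counting factor $3$ and of the lower bound on $\min_i|C_i|$ is exactly the point that needs care, and you handle it. The only shared caveat is that, as stated, the lemma has no explicit largeness assumption on $m$, while both your argument and the paper's require $x\ge n_0$ in \Cref{thm-LM2022}; this is harmless in the application (where $m\ge m_0$) but worth flagging.
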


\begin{proof}
We denote $\gamma = \varepsilon /2$ and 
$\beta = \min\{\delta_0, \gamma^4\}$, where $\delta_0=\delta_0(\gamma)$ is defined as in \Cref{thm-LM2022}. 
    We construct a $3$-uniform hypergraph $\mathcal{H}$ in which $V(\mathcal{H})=V(G)$ and 
    $\{a,b,c\}$ is an edge of $\mathcal{H}$ if and only if $abc$ forms a triangle in $G$. 
    Then $|\mathcal{H}|=k_3(G)$ and $|\partial \mathcal{H}| \le e(G)=m$. 
    Since $G$ is $K_4$-free, we know by definition that $\mathcal{H}$ is $\mathcal{K}_4^{(3)}$-free. So $k_3(G)\ge (1- \beta )(m/3)^{3/2}$ gives 
    \[  |\mathcal{H}|^{1/3} \ge (1- \delta_0) 
    \left( {|\partial \mathcal{H}|}/{3}\right)^{1/2}. \]
    Setting $x:=3(\partial \mathcal{H}/3)^{1/2}$ in \Cref{thm-LM2022}, there exists 
     a vertex set $C\subseteq V(\mathcal{H})$ with 
    $|C|\le \lceil 3(m/3)^{1/2}\rceil$ such that 
    $\mathcal{H}$ is a subgraph of $\mathcal{T}_{C,3}^{(3)}$ after removing at most $\gamma^2 (m/3)^{3/2}$ edges.

    We denote $C=V_1\sqcup V_2\sqcup V_3$. 
    Our goal is to show that such a partition is the desired $T_{C,3}$ for $G$. 
    We call a triangle of $G$ good if it intersects each $V_i$ in exactly one vertex. Otherwise, we call it bad. In other words, \Cref{thm-LM2022} implies that 
    $G$ has at most $\gamma^2(m/3)^{3/2}$ bad triangles. 
    
   Let $E_b$ be the set of edges in $G$ that are not contained in any $G[V_i,V_j]$ for $i\neq j$. We claim that $|E_b|\le \gamma m$. 
   Suppose on the contrary that $|E_b|> \gamma m$. Then 
    there are less than $(1-\gamma)m$ cross-edges in $V_1\sqcup V_2\sqcup V_3$. Since $G$ is $K_4$-free, by Lemma \ref{lem-SS-FR}, there are less than $((1-\gamma)m/3)^{3/2}$ good triangles in $G$. Therefore, it follows that 
    \[ k_3(G)<\gamma^2(m/3)^{3/2} + ((1-\gamma)m/3)^{3/2}. \]  
    By the assumption, we have 
    \[  k_3(G)\ge (1- \beta )(m/3)^{3/2} \ge (1- \gamma^4)(m/3)^{3/2}, \] 
     which leads to a contradiction since $1-\gamma^4 > \gamma^2 + (1-\gamma)^{3/2}$ for any $0<\gamma<0.74$. Thus, we must have $|E_b| \le \gamma m$, i.e.,  
    there are at most $\gamma m$ edges of $G$ outside $\bigcup_{i<j} G[V_i,V_j]$. 

   In what follows, we show that there are few missing cross-edges in the partition $C=V_1\sqcup V_2\sqcup V_3$. From the above argument, we know that $G$ has at least $(1- \beta -\gamma^2)(m/3)^{3/2}$ good triangles. 
    Let $E_s$ be the set of edges of $T_{C,3}$ that are missed in $G$.   
     Observe that each missing edge between $V_1$ and $V_2$ destroys $|V_3|$ triangles of $T_{C,3}$. 
    Since $\big| |V_i| - |V_j| \big|\le 1$, we have 
    \[ |C|/3 -1 \le |V_i|\le |C|/3 +1. \] 
    We claim that $|E_s|\le \gamma m$. 
    Suppose on the contrary that $|E_s|> \gamma m$. Then these $\gamma m$ edges destroy at least $\gamma m (|C|/3-1)$ triangles of $T_{C,3}$. So $G$ has at most $(|C|/3 +1)^3 - \gamma m (|C|/3-1)$ good triangles. Since $|C|\le \lceil 3(m/3)^{1/2}\rceil$, it is easy to verify that 
    \[ (|C|/3 +1)^3 - \gamma m (|C|/3-1) < (1- \beta - \gamma^2)(m/3)^{3/2}, 
    \] 
    which leads to a contradiction on the number of good triangles in $G$. 

    Therefore, we conclude that 
    $d(G,T_{C,3})\le |E_b| + |E_s| \le 
    2\gamma m = \varepsilon m$, as desired. 
\end{proof}

\end{document}